\documentclass{amsart}

\usepackage{amssymb}
\usepackage{mathrsfs}
\usepackage[colorlinks=true,linkcolor=blue,citecolor=magenta]{hyperref}

\newtheorem{thm}{Theorem}[section]

\newtheorem{lem}[thm]{Lemma}
\newtheorem*{lem*}{Lemma}

\newtheorem{prop}[thm]{Proposition}

\newtheorem{cor}[thm]{Corollary}

\newtheorem{question}[thm]{Question}

\theoremstyle{remark}
\newtheorem{remark}[thm]{Remark}

\theoremstyle{definition}
\newtheorem{defn}[thm]{Definition}

\newcounter{my_enumerate_counter}

\newcommand\comment[1]{}

\renewcommand{\epsilon}{\varepsilon}

\title[]
{Finitely generated infinite simple groups of homeomorphisms of the real line}

\thanks{This research has been supported by a Swiss national science foundation ``Ambizione" grant of the second author.
The second author would like to thank Air France for its hospitality on a flight during which a portion of this paper was written.
The second author would also like to thank SNU and KAIST for their hospitality during a visit to Korea in the course of which some of this work was completed.
The authors would like to thank Justin Moore, Michele Triestino, Nicol\'{a}s Matte Bon, Sang-hyun Kim, Thomas Koberda, Matt Brin and Nicolas Monod for helpful remarks on the exposition.}

\author{James Hyde} 
\address{Mathematical Institute,
University of St. Andrews,
Scotland.}\email{jameshydemaths@gmail.com}
\author{Yash Lodha}
\address{Institute of Mathematics, EPFL, Lausanne, Switzerland.}\email{yash.lodha@epfl.ch}

\keywords{Groups of homeomorphisms, Simple groups}

\subjclass[2010]{Primary: 43A07; Secondary: 20F05}

\begin{document}

\dedicatory{} 

\begin{abstract}
We construct examples of finitely generated infinite simple groups of homeomorphisms of the real line.
Equivalently, these are examples of finitely generated simple left (or right) orderable groups.
This answers a well known open question of Rhemtulla from $1980$ concerning the existence of such groups.
In fact, our construction provides a family of continuum many isomorphism types of groups with these properties. 
\end{abstract}

\maketitle

\section{Introduction}

In this article we answer the following question.

\begin{question}\label{question}
Is there a finitely generated infinite simple group of homeomorphisms of the real line?
\end{question}

It is clear that a group that positively answers Question \ref{question} must indeed be a subgroup of $\textup{Homeo}^+(\mathbf{R})$,
which is the group of orientation preserving homeomorphisms of the real line.
Recall that a countable group embeds in $\textup{Homeo}^+(\mathbf{R})$ if and only if it is \emph{left orderable}, i.e. it admits a total order that is invariant under left multiplication.
(See \cite{Navas}). Therefore the above question can be restated as follows. 

\begin{question}\label{questionreformulation}
(Rhemtulla $1980$) Is there a finitely generated simple left orderable group?
\end{question}

(This appears as question $16.50$ in the ``Kourkova Notebook (No. $19$): Unsolved problems in group theory"  \cite{Kourkova}, 
Question $1.67$ in the book ``Ordered Groups and Topology" by Clay and Rolfsen \cite{ClayRolfsen}, Question vii in ``Groups, orders, and laws" by Navas \cite{Navas2} and
Question $9$ on page $265$ in ``Ordered Algebraic Structures; ed. Martinez, Jorge" \cite{Martinez}.)

Countable simple groups that are not finitely generated are abundant in \newline $\textup{Homeo}^+(\mathbf{R})$.
Some natural examples include commutator subgroups of certain groups of piecewise linear and piecewise projective homeomorphisms.
For instance, the commutator subgroup of Thompson's group $F$ and certain generalisations.
(See \cite{BurilloLodhaReeves} for more examples).
In fact, the second author together with Kim and Koberda (in \cite{KimKoberdaLodha}) constructed a continuum family of pairwise nonisomorphic countable simple subgroups of $\textup{Homeo}^+(\mathbf{R})$.
These groups occur naturally as commutator subgroups of the so called \emph{chain groups}, which are also constructed in \cite{KimKoberdaLodha}, and for natural reasons do not admit finite generating sets.

Moreover, there are several examples of finitely presented infinite simple groups of homeomorphisms of the circle,
such as Thompson's group $T$ and related examples.
(See \cite{Lodha} for a recent construction by the second author.) 
However, none of these groups admit non-trivial actions on the real line by homeomorphisms.

A classical obstruction to Question \ref{question} is the so called \emph{Thurston Stability theorem},
which says that any group of $C^1$-diffeomorphisms of an interval of the type $[r,s)$ or $(r,s]$ is \emph{locally indicable}, i.e. every finitely generated subgroup admits a homomorphism onto $\mathbf{Z}$.
Therefore, given a finitely generated simple group $G<\textup{Homeo}^+(\mathbf{R})$, the extention of the action to $[-\infty,\infty)$ or $(-\infty,\infty]$
must not be conjugate to an action by $C^1$-diffeomorphisms.
It is also known that bi-orderable simple groups cannot be finitely generated (see the discussion before Question $1.67$ in \cite{ClayRolfsen}),
and that finitely generated amenable left orderable groups cannot be simple (see \cite{WitteMorris}).

Another obstruction is the so called \emph{germ homomorphism} onto the groups of germs at $\pm \infty$, which is non trivial for many naturally occurring examples of subgroups of $\textup{Homeo}^+(\mathbf{R})$.
Finally, the standard methods for proving simplicity for subgroups of $\textup{Homeo}^+(\mathbf{R})$ (for instance Higman's simplicity criterion, see \cite{BurilloLodhaReeves}) require that the group is expressed as an increasing union of \emph{compactly supported} subgroups,
and hence the resulting group cannot be finitely generatable.
Providing a proof of simplicity that does not rely on the group being expressed in this way is the key technical challenge in approaching Question \ref{question},
and hence the novelty of our construction.

In this article we introduce a systematic construction of finitely generated simple subgroups of $\textup{Homeo}^+(\mathbf{R})$.
Recall that the group $\textup{PL}^+([0,1])$ is the group of orientation preserving piecewise linear homeomorphisms of $[0,1]$.
Our construction is obtained from gluing two different non standard actions of certain subgroups of $\textup{PL}^+([0,1])$ in a controlled manner.
The construction takes as an input a certain \emph{quasi-periodic} labelling $\rho$ of the set $\frac{1}{2}\mathbf{Z}$ which is a map $$\rho:\frac{1}{2}\mathbf{Z}\to \{a,b,a^{-1},b^{-1}\}$$ that satisfies a certain set of axioms.
Such labellings exist and are easy to construct explicitly (see Lemma \ref{quasi-periodicLabellings}).
For each such labelling $\rho$, we construct an explicit group action $G_{\rho}<\textup{Homeo}^+(\mathbf{R})$ and show that it satisfies the following.

\begin{thm}\label{main}
Let $\rho$ be a quasi-periodic labelling of $\frac{1}{2}\mathbf{Z}$.
Then the group $G_{\rho}$ is a finitely generated simple subgroup of $\textup{Homeo}^+(\mathbf{R})$.
\end{thm}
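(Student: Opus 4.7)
The plan is to establish three things about $G_\rho$: that it is two-generated, that it is infinite, and that it is simple; the last of these is the heart of the matter.

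For finite generation, I would exploit the fact that the labelling $\rho$ uses only the four symbols $\{a, b, a^{-1}, b^{-1}\}$: these should correspond to two distinguished homeomorphisms $a, b \in G_\rho$ singled out by the construction, and the goal is to prove $G_\rho = \langle a, b \rangle$. Any further element arising in the construction should be realized by interpreting a finite segment of $\rho$ as a word in $\{a,b\}^{\pm 1}$ and applying it locally. Quasi-periodicity, which forces every finite label pattern to recur throughout $\frac{1}{2}\mathbf{Z}$, is what allows the two-generator subgroup to reach globally, since local data around any point can be replicated around any other. Infiniteness should follow directly from the fact that $a$ (or $b$) already has infinite order in $\textup{Homeo}^+(\mathbf{R})$.

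For simplicity, I would fix a nontrivial $g \in G_\rho$ and write $N$ for its normal closure in $G_\rho$, aiming to show $a, b \in N$. The first step is a standard commutator trick: choose an interval $I$ on which $g$ displaces points, pick $f \in G_\rho$ supported near $I$ and disjoint from $g(I)$, and observe that $[g, f] \in N$ has controlled, localized support. The next, essential step is to use the quasi-periodic structure of $\rho$ together with the gluing in the construction to produce elements of $G_\rho$ that carry one labelled region of $\mathbf{R}$ onto another region with the same pattern. Conjugating $[g,f]$ by such elements spreads copies across $\mathbf{R}$; combining sufficiently many of these conjugates (together with further commutators) inside $N$ should yield elements of $N$ coinciding with $a$ or with $b$ on an arbitrarily prescribed compact interval. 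A final fragment-and-glue argument, leveraging the finite-generation structure, then upgrades such local agreements to the statements $a \in N$ and $b \in N$.

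The hard part will be converting the abstract recurrence delivered by quasi-periodicity into actual conjugating elements lying in $G_\rho$ itself: this is exactly the role that the gluing of two non-standard $\textup{PL}^+([0,1])$-actions is designed to play, and verifying that this gluing yields a sufficiently rich supply of ``translation-like'' elements — able to match pattern to pattern along $\rho$ — is where most of the technical work will lie. A parallel subtlety is eliminating possible obstructions coming from germ homomorphisms at $\pm\infty$, which would otherwise produce nontrivial quotients and defeat simplicity; here too, quasi-periodicity, by ensuring that no behavior near infinity is qualitatively different from behavior at a bounded typical point, should be what forces any such germ homomorphism to vanish on $G_\rho$.
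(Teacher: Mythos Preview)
Your proposal rests on a misreading of the construction that causes the whole strategy to misfire. The symbols $a,b,a^{-1},b^{-1}$ are \emph{not} group elements of $G_\rho$; they are orientation markers on $\frac{1}{2}\mathbf{Z}$ that dictate, interval by interval, whether a copy of the $H$-action is glued in directly or via the involution $\iota$. The actual generating set $\mathbf{S}_\rho=\{\zeta_1,\zeta_2,\zeta_3,\chi_1,\chi_2,\chi_3\}$ consists of six homeomorphisms, each acting ``diagonally'': $\zeta_i$ acts as (a possibly flipped copy of) $\nu_i$ on \emph{every} interval $[n,n+1]$ simultaneously, and likewise $\chi_i$ on every $[n-\tfrac12,n+\tfrac12]$. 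So finite generation is immediate from the definition, not something to be deduced from quasi-periodicity, and there is no pair of generators $a,b$ to recover from a normal closure.

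More seriously, your simplicity plan hinges on producing ``translation-like'' elements of $G_\rho$ that carry one labelled region onto another with the same pattern. No such elements exist: one of the first structural facts proved (Lemma~\ref{ElementsProperties}) is that \emph{every} nontrivial element of $G_\rho$ fixes a point of $\mathbf{R}$, and this is precisely where part~(2) of quasi-periodicity is used. The role of quasi-periodicity is entirely different from what you envisage. Part~(1) guarantees that when an element $f$ fixes $\mathbf{Z}$ pointwise, its restrictions to the intervals $[n,n+1]$ realise only finitely many homeomorphisms of $[0,1]$ (its ``atoms''), and that the set of intervals on which a given atom appears is syndetic. The paper's argument then runs through auxiliary subgroups $\mathcal{K}_X\cong H$ acting diagonally on a subset $X$ of the intervals: one shows that the normal closure of any nontrivial element meets $\mathcal{K}_X'$ nontrivially for finitely many sets $X$ whose union is all of $\frac{1}{2}\mathbf{Z}\setminus\mathbf{Z}$, and then uses simplicity of $H'=F'$ and a combinatorial lemma about the $\mathcal{K}_X'$ to conclude $\mathcal{K}'\subseteq N$ (and symmetrically $\mathcal{L}'\subseteq N$). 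Since $G_\rho=\langle\mathcal{K}'\cup\mathcal{L}'\rangle$, simplicity follows. Your commutator-and-conjugate scheme, which tries to manufacture a specific generator inside $N$ by spreading a localized element via nonexistent shifts, cannot be made to work here.
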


In general different quasi-periodic labellings may produce isomorphic groups.
However, the above construction is generalised in Section \ref{continuum} to provide an uncountable family of pairwise nonisomorphic groups with the desired features.
In addition to a quasi-periodic labelling, the generalized construction uses as an additional input a real $\alpha\in (0,1)\setminus \mathbf{Z}[\frac{1}{2}]$. To each such pair, we associate a group $G_{\rho,\alpha}$ and show the following.

\begin{thm}\label{continuumsimple}
The family $$\mathcal{S}=\{G_{\rho,\alpha}\mid \rho\text{ is a quasi-periodic }\text{labelling and }\alpha\in (0,1)\setminus \mathbf{Z}[\frac{1}{2}]\}$$
consists of finitely generated simple subgroups of $\textup{Homeo}^+(\mathbf{R})$ of continuum many isomorphism types. 
\end{thm}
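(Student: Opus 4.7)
The plan is to address two separate points: first, to verify that each $G_{\rho,\alpha}$ is a finitely generated simple subgroup of $\textup{Homeo}^+(\mathbf{R})$, and second, to exhibit an isomorphism invariant of $G_{\rho,\alpha}$ whose values range over an uncountable set as $(\rho,\alpha)$ varies.

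For the first point, I expect $G_{\rho,\alpha}$ to be a controlled modification of $G_\rho$ in which the real number $\alpha$ is used to introduce additional break points or slopes while preserving the structural features used in the proof of Theorem \ref{main}. I would therefore first check that the proof of that theorem adapts with only cosmetic changes: the finite generating set will still consist of a small list of explicit homeomorphisms, now with $\alpha$ appearing in their definitions, and the simplicity argument, which does not rely on a compactly supported filtration, should be driven entirely by the global quasi-periodic combinatorial data encoded by $\rho$.

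For the main content of the theorem, I would attach to each $G_{\rho,\alpha}$ a countable invariant $\Lambda(G_{\rho,\alpha})\subseteq \mathbf{R}$ that essentially remembers $\alpha$ modulo $\mathbf{Z}[\frac{1}{2}]$. A natural candidate is the additive subgroup of $\mathbf{R}$ generated by translation numbers at $\pm\infty$ of certain distinguished elements, or the subring of $\mathbf{R}$ generated by slopes in a piecewise linear model for the action. Because $\alpha\notin \mathbf{Z}[\frac{1}{2}]$, this invariant properly extends $\mathbf{Z}[\frac{1}{2}]$; since each $\Lambda$ is countable while the parameter space is of cardinality continuum, a cardinality argument would then force the family $\{\Lambda(G_{\rho,\alpha})\}$ to take continuum many values, and hence produce continuum many isomorphism types in $\mathcal{S}$.

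The main obstacle is the isomorphism-invariance of $\Lambda$. Since an abstract isomorphism $G_{\rho,\alpha}\cong G_{\rho',\alpha'}$ is not a priori given spatially, one either needs a Rubin-style reconstruction theorem showing that any such isomorphism is implemented by conjugation by some $h\in\textup{Homeo}^+(\mathbf{R})$ (after which translation numbers and slope data are preserved up to the derivative data of $h$), or else an algebraic description of $\Lambda$ in purely group-theoretic terms, for example as the image of a canonical homomorphism from a distinguished abelian subgroup (such as a maximal abelian subgroup acting by translations near infinity) into $\mathbf{R}$. Executing this in a way that is robust under variation of $\rho$, so that the single parameter $\alpha$ already produces continuum many invariants, is the technical heart of the argument.
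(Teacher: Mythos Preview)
Your plan for the simplicity half is essentially what the paper does: one replaces $H$ by the overgroup $H_\alpha=\langle H,\xi_\alpha\rangle$ in the construction, checks that $H_\alpha'$ is simple and coincides with the compactly supported elements (this uses Higman's criterion plus an explicit commutator expression for $\xi_\alpha$), and then the entire simplicity argument for $G_\rho$ goes through verbatim with $H_\alpha$ in place of $H$.

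For the continuum part, your approach diverges from the paper's, and the paper's route is considerably shorter. You propose to build an isomorphism invariant $\Lambda(G_{\rho,\alpha})\subseteq\mathbf{R}$ directly from slope or translation data of the action, and you correctly flag that establishing isomorphism-invariance is the crux, requiring something like a Rubin reconstruction theorem for $G_{\rho,\alpha}$ acting on $\mathbf{R}$. The paper sidesteps all of this. It observes that $\Gamma_\alpha=\langle F,f_\alpha\rangle$ sits inside $G_{\rho,\alpha}$ as an abstract subgroup (because $\langle F_I,\xi_\alpha\rangle\cong\Gamma_\alpha$ sits inside $H_\alpha$), and it has already shown (Proposition~\ref{PLLemma}) that the family $\{\Gamma_\alpha\}$ realises continuum many isomorphism types; that proposition is where Rubin is invoked, but for the much more tractable action of $\Gamma_\alpha$ on $[0,1]$ with its germ at $0$. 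The conclusion then follows from the soft counting fact that a countable group has only countably many finitely generated subgroups: if $\mathcal{S}$ had only countably many isomorphism types, the collection of all their finitely generated subgroups would also have countably many isomorphism types, contradicting the embedding of continuum many non-isomorphic $\Gamma_\alpha$.

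Your approach could likely be pushed through, but note that the translation-number suggestion is problematic here (every element of $G_{\rho,\alpha}$ has a fixed point, by the analogue of Lemma~\ref{ElementsProperties}), so you would be forced onto the slope invariant, and then you would need Rubin for $G_{\rho,\alpha}$ on $\mathbf{R}$ together with control over how an arbitrary conjugating homeomorphism of $\mathbf{R}$ transforms slope groups. The paper's subgroup-counting trick buys you the result without any of that analysis.
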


Equivalently, we obtain a strong positive answer to Question \ref{questionreformulation}.

\begin{cor}
There exist continuum many isomorphism types of finitely generated simple left orderable groups.
\end{cor}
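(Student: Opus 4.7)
The corollary is an immediate translation of Theorem \ref{continuumsimple} via the classical correspondence between left orderability of countable groups and embeddability into $\textup{Homeo}^+(\mathbf{R})$, so no new mathematical work is required beyond invoking the theorem.

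The plan is as follows. First, I would invoke Theorem \ref{continuumsimple} to produce the family $\mathcal{S}=\{G_{\rho,\alpha}\}$, which contains continuum many isomorphism types of finitely generated simple subgroups of $\textup{Homeo}^+(\mathbf{R})$. Second, I would observe that each $G_{\rho,\alpha}$ is countable (being finitely generated) and is realized concretely as a subgroup of $\textup{Homeo}^+(\mathbf{R})$; by the classical fact recalled in the introduction (with reference to \cite{Navas}), any countable subgroup of $\textup{Homeo}^+(\mathbf{R})$ is left orderable, since one can pull back the lexicographic-type order obtained from a dense orbit on $\mathbf{R}$. Hence each $G_{\rho,\alpha}$ is a finitely generated simple left orderable group.

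Third, since the isomorphism types in $\mathcal{S}$ number the continuum by Theorem \ref{continuumsimple}, and isomorphism of abstract groups is preserved under forgetting the particular embedding into $\textup{Homeo}^+(\mathbf{R})$, we obtain continuum many isomorphism types of finitely generated simple left orderable groups, yielding the corollary.

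There is no genuine obstacle here: the only nontrivial content is already contained in Theorem \ref{continuumsimple}, whose proof requires verifying simplicity, finite generation, and producing continuum many distinct isomorphism types via the parameter $\alpha \in (0,1)\setminus \mathbf{Z}[\tfrac{1}{2}]$. Given that theorem, the present corollary is a one-line deduction from the Navas correspondence.
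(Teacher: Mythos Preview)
Your proposal is correct and matches the paper's treatment exactly: the paper states this corollary without proof, treating it as an immediate consequence of Theorem \ref{continuumsimple} together with the correspondence between countable subgroups of $\textup{Homeo}^+(\mathbf{R})$ and left orderable countable groups recalled in the introduction (with reference to \cite{Navas}). There is nothing to add.
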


Finally, we remark that the groups we construct are also interesting test cases for the following well known open question (see Question $7.5$ in \cite{BekkaHarpeValette} and Question $3$ in \cite{NavasICM}).

\begin{question}\label{open}
Is there an infinite group of homeomorphisms of the real line with Property $(\mathrm{T})$?
\end{question}

In this direction, it is natural to inquire whether variations of our construction can provide such examples.
$G_{\rho}$ does not satisfy some of the known obstructions to $(\mathrm{T})$ for groups acting on $1$-manifolds.
This includes the criterion established in the recent work of the second author with Matte Bon and Triestino (see Theorem $1.1$ and Corollary $1.3$ in  \cite{LodhaMatteBonTriestino}).
In fact, the groups $G_{\rho}$ act by countably singular $C^{\infty}$-diffeomorphisms on the \emph{non compact} manifold $\mathbf{R}$.
This is precisely the situation in which the criterion in \cite{LodhaMatteBonTriestino} (and even the cocycle in \cite{LodhaMatteBonTriestino})
fails to establish that $G_{\rho}$ does not have $(\mathrm{T})$.

\section{Preliminaries}

All actions will be right actions, unless otherwise specified.
Given a group action $G<\textup{Homeo}^+(\mathbf{R})$ and a $g\in G$,
we denote by $Supp(g)$, or the \emph{open support}, as the set $$Supp(g)=\{x\in \mathbf{R}\mid x\cdot g\neq x\}$$
Note that $Supp(g)$ is an open set, and that $\mathbf{R}$ can be replaced by another $1$-manifold.
A homeomorphism $f:[0,1]\to [0,1]$ is said to be \emph{compactly supported in} $(0,1)$
if $\overline{Supp(f)}\subset (0,1)$.
Similarly, a homeomorphism $f:\mathbf{R}\to \mathbf{R}$ is said to be \emph{compactly supported in} $\mathbf{R}$
if $\overline{Supp(f)}$ is a compact interval in $\mathbf{R}$.
A point $x\in \mathbf{R}$ is said to be a \emph{transition point} of $f$ if $$x\in \partial Supp(f)=\overline{Supp(f)}\setminus Supp(f)$$

Our construction uses in an essential way the structure and properties of Thompson's group $F$.
We shall only describe the features of $F$ here that we need, and we direct the reader to \cite{CannonFloydParry} and \cite{Belk} for more comprehensive surveys.
Recall that the group $\textup{PL}^+([0,1])$ is the group of orientation preserving piecewise linear homeomorphisms of $[0,1]$.
Recall that $F$ is defined as the subgroup of $\textup{PL}^+([0,1])$ that satisfy the following:
\begin{enumerate}
\item Each element has at most finitely many breakpoints. All breakpoints lie in the set of dyadic rationals, i.e. $\mathbf{Z}[\frac{1}{2}]$.
\item For each element, the derivatives, wherever they exist, are powers of $2$.
\end{enumerate}
By \emph{breakpoint} we mean a point where the derivative does not exist.
For $r,s\in \mathbf{Z}[\frac{1}{2}]\cap[0,1]$ such that $r<s$, we denote by $F_{[r,s]}$ the subgroup of elements whose support lies in $[r,s]$.
The following are well known facts that we shall need.
The group $F$ satisfies the following:
\begin{enumerate}
\item $F$ is $2$-generated.
\item For each pair $r,s\in \mathbf{Z}[\frac{1}{2}]\cap[0,1]$ such that $r<s$, the group $F_{[r,s]}$ is isomorphic to $F$ and hence is also $2$-generated.
\item $F'$ is simple and consists of precisely the set of elements $g\in F$ such that $\overline{Supp(g)}\subset (0,1)$.
\end{enumerate}

An interval $I\subseteq [0,1]$ is said to be a \emph{standard dyadic interval}, if it is of the form $[\frac{a}{2^n},\frac{a+1}{2^{n}}]$
such that $a,n\in \mathbf{N}, a<2^n-1$.
The following are elementary facts about the action of $F$ on the standard dyadic intervals.

\begin{lem}\label{TransitiveStandard}
Let $I,J$ be standard dyadic intervals in $(0,1)$.
Then there is an element $f\in F'$ such that:
\begin{enumerate} 
\item $I\cdot f=J$.
\item $f\restriction I$ is linear.
\end{enumerate}
\end{lem}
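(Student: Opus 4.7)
The plan is to construct $f$ explicitly by exhibiting two partitions $\Pi_I,\Pi_J$ of $[0,1]$ into standard dyadic intervals of the same cardinality, with $I$ appearing as a piece of $\Pi_I$ in the same ordinal position as $J$ appears as a piece of $\Pi_J$, and with identical leftmost and rightmost pieces; the element $f$ is then defined as the unique piecewise linear homeomorphism of $[0,1]$ mapping the $k$-th piece of $\Pi_I$ linearly and order-preservingly onto the $k$-th piece of $\Pi_J$ for every $k$.

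To produce such partitions, use that $I,J\subset(0,1)$ are closed to pick dyadic rationals $\epsilon=1/2^N$ and $\delta=1/2^M$ small enough that $[0,\epsilon]$ and $[1-\delta,1]$ are disjoint from $I\cup J$. Build $\Pi_I$ by taking $[0,\epsilon]$, then some partition of $[\epsilon,\min I]$ into standard dyadic intervals, then $I$, then a partition of $[\max I,1-\delta]$ into standard dyadic intervals, then $[1-\delta,1]$; build $\Pi_J$ analogously. Any closed interval with dyadic rational endpoints can be written as a finite union of standard dyadic intervals, so the required filler partitions exist. Since halving a standard dyadic interval produces two standard dyadic intervals, iterating such halvings on the filler pieces of $\Pi_I$ and $\Pi_J$ allows one to independently increase the number of filler pieces on either side of $I$ and of $J$. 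In particular one may arrange that both partitions have $\max(L_I,L_J)$ filler pieces to the left and $\max(R_I,R_J)$ filler pieces to the right of the $I$/$J$-piece (where $L_*,R_*$ denote the initial filler counts), after which $\Pi_I$ and $\Pi_J$ have the same cardinality and $I,J$ occupy the same index.

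Defining $f$ as above yields a piecewise linear homeomorphism whose breakpoints are dyadic rationals and whose slopes are ratios of lengths of standard dyadic intervals, hence powers of $2$; thus $f\in F$. The leftmost piece $[0,\epsilon]$ and rightmost piece $[1-\delta,1]$ are common to both partitions and are mapped to themselves linearly, hence identically, so $\overline{Supp(f)}\subseteq[\epsilon,1-\delta]\subset(0,1)$ and $f\in F'$. By construction $I\cdot f=J$ and $f\restriction I$ is linear, giving both conclusions of the lemma. The only real content is the combinatorial refinement step; the remaining verifications are immediate.
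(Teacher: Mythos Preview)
Your argument is correct: this is exactly the standard tree-pair (or ``rectangle diagram'') construction of elements of $F$, and the extra care to make the outermost pieces $[0,\epsilon]$ and $[1-\delta,1]$ coincide in both partitions is precisely what forces $f$ to lie in $F'$ rather than merely in $F$. The paper itself states this lemma as an elementary fact without proof, so there is no alternative approach to compare; your write-up supplies the details the paper omits, and the reasoning is sound.
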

\begin{lem}\label{TransitiveStandard2}
Let $I_1,I_2$ and $J_1,J_2$ be standard dyadic intervals in $(0,1)$ such that $$sup(I_1)<inf(I_2)\qquad sup(J_1)<inf(J_2)$$
Then there is an element $f\in F'$ such that:
\begin{enumerate}
\item $I_1\cdot f=J_1$ and $I_2\cdot f=J_2$.
\item $f\restriction I_1$ and $f\restriction I_2$ are linear.
\end{enumerate}
\end{lem}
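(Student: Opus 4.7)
The plan is to construct $f$ explicitly as the piecewise linear map between two standard dyadic partitions of $[0,1]$ that contain $I_1$ (respectively $J_1$) and $I_2$ (respectively $J_2$) as pieces in corresponding positions. The key flexibility is that any standard dyadic interval is the disjoint union of its two standard dyadic halves, so we may refine freely.

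First, choose $N \in \mathbf{N}$ large enough that $2^{-N}$ is smaller than each of $\inf(I_1),\inf(J_1),1-\sup(I_2),1-\sup(J_2)$, so that $[0,2^{-N}]$ and $[1-2^{-N},1]$ lie strictly to the left of $I_1,J_1$ and strictly to the right of $I_2,J_2$. Since all endpoints involved are dyadic rationals, each of the intervals $[2^{-N},\inf(I_1)]$, $[\sup(I_1),\inf(I_2)]$, $[\sup(I_2),1-2^{-N}]$ can be partitioned into a finite sequence of standard dyadic intervals, producing a partition of $[0,1]$ of the form
\[
[0,2^{-N}],\ L_1,\ldots,L_r,\ I_1,\ M_1,\ldots,M_s,\ I_2,\ N_1,\ldots,N_t,\ [1-2^{-N},1].
\]
I would then build an analogous partition for $J_1,J_2$ with $r',s',t'$ intermediate pieces in the three corresponding gaps.

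Next, refine both partitions by iterated halving within each gap until $r=r'$, $s=s'$, and $t=t'$. Both partitions now have the same total number of pieces, with $I_1,J_1$ at identical indices and $I_2,J_2$ at identical indices, and with shared leftmost piece $[0,2^{-N}]$ and shared rightmost piece $[1-2^{-N},1]$. Define $f\colon [0,1]\to [0,1]$ by sending the $i$-th piece of the first partition linearly onto the $i$-th piece of the second. All breakpoints of $f$ are dyadic rationals and all slopes are ratios of standard dyadic lengths, hence powers of $2$, so $f\in F$. Because the leftmost and rightmost pieces are fixed setwise by a linear map with both endpoints fixed, $f$ is the identity on $[0,2^{-N}]\cup[1-2^{-N},1]$; thus $\overline{Supp(f)}\subset(0,1)$ and $f\in F'$. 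The conditions $I_1\cdot f=J_1$, $I_2\cdot f=J_2$, and linearity on $I_1$ and $I_2$ are immediate from the construction.

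The only substantive step is the counting bookkeeping for the refinement, but this presents no real obstacle: each halving of a standard dyadic interval yields two standard dyadic subintervals, so the number of pieces in any gap may be incremented arbitrarily while preserving the standard dyadic character of the partition.
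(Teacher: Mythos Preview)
The paper does not actually prove this lemma; it is stated (together with Lemma~\ref{TransitiveStandard}) as one of the ``elementary facts about the action of $F$ on the standard dyadic intervals'' and left to the reader. Your argument is correct and is precisely the standard tree-pair (or partition-pair) construction that underlies such facts: build two standard dyadic partitions of $[0,1]$ with $I_1,I_2$ (resp.\ $J_1,J_2$) occurring at matching indices and with common first and last pieces, then take the piecewise linear map between them. The refinement step to equalise the counts in each gap is exactly the usual caret-addition manoeuvre, and your observation that the shared extremal pieces force $f$ to be the identity near $0$ and $1$ is what lands $f$ in $F'$ rather than merely $F$. One cosmetic point: under the paper's definition the interval $[1-2^{-N},1]$ is technically not a \emph{standard} dyadic interval (the condition $a<2^n-1$ excludes it), but this is irrelevant to your argument since you only need its length to be a power of~$2$, which it is.
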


We fix $\iota:[0,1]\to [0,1]$ as the unique orientation reversing isometry.
We say that an element $f\in F$ is \emph{symmetric}, if $f=\iota\circ f \circ \iota$.
We say that a set $I\subset [0,1]$ is \emph{symmetric} if $I\cdot \iota=I$.
Note that given any symmetric set $I$ with nonempty interior, we can find a symmetric element $f\in F'$ such that $Supp(f)\subset int(I)$.

\begin{defn}\label{H}
We fix an element $c_0\in F$ with the following properties:
\begin{enumerate}
\item The support of $c_0$ equals $(0,\frac{1}{4})$ and $x\cdot c_0>x$ for each $x\in (0,\frac{1}{4})$.
\item $c_0\restriction {(0,\frac{1}{16})}$ equals the map $t\to 2t$.
\end{enumerate}
Let $$c_1=\iota\circ c_0\circ \iota\qquad \nu_1=c_0c_1$$
Note that $\nu_1\in F$ is a symmetric element.
We define a subgroup $H$ of $F$ as $$H=\langle F',\nu_1\rangle$$
Finally, we fix $$\nu_2,\nu_3:[0,1]\to [0,1]$$ as the standard generators whose supports are contained in $(\frac{1}{16},\frac{15}{16})$ and that generate the group  $F_{[\frac{1}{16},\frac{15}{16}]}$.
\end{defn}

\begin{lem}\label{3gen}
$H$ is generated by $\nu_1,\nu_2,\nu_3$. $H'$ is simple and consists of precisely the set of elements of $H$ (or $F$) that are compactly supported in $(0,1)$.
In particular, $H'=F'$.
\end{lem}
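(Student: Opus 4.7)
The plan is to establish $H=\langle \nu_1,\nu_2,\nu_3\rangle$ first by a conjugation/dynamics argument, and then deduce $H'=F'$ from the perfectness of $F'$ and the abelianness of $H/F'$.

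The containment $\langle \nu_1,\nu_2,\nu_3\rangle\subseteq H$ is immediate: $\nu_1\in H$ by definition, while $\nu_2,\nu_3$ lie in $F_{[1/16,15/16]}\subseteq F'\subseteq H$. For the reverse containment, I would show $F'\subseteq\langle \nu_1,\nu_2,\nu_3\rangle$ by pushing an arbitrary $g\in F'$ into $F_{[1/16,15/16]}=\langle\nu_2,\nu_3\rangle$ via iterated conjugation by $\nu_1$. The relevant dynamics are: on $(0,1/4)$, $\nu_1$ acts as $c_0$ and so pushes every point to the right with $1/4$ as the unique accumulation point of forward orbits (using $c_0'(0^+)=2$ and no fixed points of $c_0$ inside $(0,1/4)$); symmetrically, on $(3/4,1)$, $\nu_1$ acts as $c_1$ and pushes every point left toward $3/4$; and $\nu_1$ fixes $[1/4,3/4]$ pointwise. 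Given $g\in F'$ with compact support in $(0,1)$, pick $n$ large enough that the finitely many boundary points of $\overline{\operatorname{Supp}(g)}$ near $0$ get pushed past $1/16$ and those near $1$ get pushed back before $15/16$; such an $n$ exists because $\nu_1$-orbits on $(0,1/4)$ and $(3/4,1)$ accumulate at $1/4$ and $3/4$ respectively. Since $\operatorname{Supp}(g^{\nu_1^n})=\operatorname{Supp}(g)\cdot\nu_1^n$ and $g^{\nu_1^n}\in F$, we get $g^{\nu_1^n}\in F_{[1/16,15/16]}=\langle\nu_2,\nu_3\rangle$, hence $g=\nu_1^n\cdot g^{\nu_1^n}\cdot\nu_1^{-n}\in\langle\nu_1,\nu_2,\nu_3\rangle$.

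For the identification $H'=F'$, I would exploit $H\subseteq F$ together with the classical computation $F/F'\cong\mathbf{Z}^2$ via the slope-at-$0$ and slope-at-$1$ homomorphism. Since $c_0'(0^+)=2$ and $c_0$ is the identity near $1$, we have $c_0\mapsto(1,0)$; and by symmetry $c_1\mapsto(0,1)$, so $\nu_1\mapsto(1,1)$. Therefore the image of $H$ in $F/F'$ is the cyclic subgroup generated by $(1,1)$, so $H/F'$ is abelian and $H'\subseteq F'$. Conversely, $F'$ is simple and non-abelian (stated before the lemma), hence perfect, so $F'=(F')'\subseteq H'$. This gives $H'=F'$, which is simple. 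The final description of $H'$ follows, since $H\subseteq F$ and elements of $F$ compactly supported in $(0,1)$ are precisely $F'$.

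The only point that requires care is the dynamical argument in Step 1; the rest is formal. The potential pitfall is making sure the conjugate lands in $F_{[1/16,15/16]}$ rather than merely having support inside $[1/16,15/16]$, which is handled by the observation that $\nu_1\in F$ and $F$ is closed under composition, so $g^{\nu_1^n}$ is automatically piecewise linear with dyadic breakpoints and slopes powers of $2$.
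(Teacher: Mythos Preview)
Your proof is correct and follows essentially the same approach as the paper's: the conjugation argument $\nu_1^{-n} g \nu_1^n \in F_{[1/16,15/16]}$ for step one, and the chain $F' = F'' \subseteq H' \subseteq F'$ for step two. The only minor difference is that you invoke the slope homomorphism to see $H/F'$ is abelian, whereas this is immediate from $H=\langle F',\nu_1\rangle$, which already gives $H/F'$ cyclic; your route is valid but slightly longer than necessary.
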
 

\begin{proof}
For the first claim, it suffices to show that any element $g\in F'$ can be expressed as a word in the generating set $\nu_1,\nu_2,\nu_3$.
To see this, note that there is an $n\in \mathbf{Z}$ such that $\nu_1^{-n} g \nu_1^n\in F_{[\frac{1}{16},\frac{15}{16}]}$.
The second claim follows from observing that $H'$ coincides with $F'$ which is simple.
It is apparent that any element $f\in H'$ is compactly supported in $(0,1)$.
Conversely, given any element $g\in F$ that is compactly supported in $(0,1)$, we know that $g\in F'$. And since $F'=F''$, it holds that $g\in F''$.
Since $F'\subset H$, it follows that $F''\subset H'$ and hence $g\in H'$. 
\end{proof}

To generalise our construction in Section \ref{continuum} to provide a family of continuum many isomorphism types, we shall use the following family of groups.
For each $\alpha\in (0,1)$, fix a homeomorphism $f_{\alpha}\in \textup{PL}^+([0,1])$ whose slope at $0$ is $\alpha$.
Let $\mathcal{N}$ denote the family of groups $$\mathcal{N}=\{\Gamma_{\alpha}=\langle F,f_{\alpha}\rangle\mid \alpha\in (0,1)\setminus \mathbf{Z}[\frac{1}{2}]\}$$
Note that the group $\Gamma_{\alpha}$ is $3$-generated and its abelianization is $\mathbf{Z}^3$, as can be seen from the homomorphism
provided by the germs at $0,1$.
The following is an observation of Nicol\'{a}s Matte Bon and we include a short proof here for completeness. 

\begin{prop}\label{PLLemma}
$\mathcal{N}$ consists of continuum many isomorphism types of groups.
\end{prop}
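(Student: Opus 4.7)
The plan is to show that for each fixed $\alpha \in (0,1)\setminus\mathbf{Z}[\tfrac{1}{2}]$ there are at most countably many $\beta$ with $\Gamma_\alpha \iso \Gamma_\beta$; as the parameter space is uncountable, this gives continuum many isomorphism types in $\mathcal{N}$.

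The natural invariant is the image of the germ-at-$0$ homomorphism $\pi_0^\alpha : \Gamma_\alpha \to \mathbf{R}_{>0}$ sending each element $g$ to its right derivative at $0$. Since $F \leq \Gamma_\alpha$ contributes $2^{\mathbf{Z}}$ and $f_\alpha$ contributes $\alpha$, one has $\pi_0^\alpha(\Gamma_\alpha) = \langle 2, \alpha\rangle$. The goal is to argue that this concrete subgroup of $\mathbf{R}_{>0}$, and not merely its isomorphism type, is an invariant of the abstract group $\Gamma_\alpha$ (up to swapping the two endpoints of $[0,1]$).

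To this end, apply Rubin's reconstruction theorem. Because the subgroup $F \leq \Gamma_\alpha$ already acts locally densely on $(0,1)$, Rubin's theorem applies and shows that any abstract isomorphism $\phi : \Gamma_\alpha \to \Gamma_\beta$ is induced by conjugation by a homeomorphism $h : (0,1) \to (0,1)$. This $h$ extends to $[0,1]$ with $h(\{0,1\})=\{0,1\}$; assume $h(0)=0$ (the other case is treated symmetrically). The central technical step, and the main obstacle, is to show that $h$ has a linear germ at $0$: this should follow from the self-similarity of $F$ (it contains copies of itself supported in arbitrarily small right-neighborhoods of $0$) together with the fact that the $\phi$-images of those copies must sit inside $\textup{PL}^+([0,1])$, forcing $h$ to be PL near $0$; tightening the argument along a shrinking sequence of neighborhoods then pins the germ down to a single linear map.

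A linear germ $h(x)=\mu x$ preserves slopes at $0$ under conjugation by the chain rule. Therefore $\pi_0^\beta\circ\phi = \pi_0^\alpha$, and hence $\langle 2,\alpha\rangle = \langle 2,\beta\rangle$ as subgroups of $\mathbf{R}_{>0}$. In particular $\beta$ lies in the countable set $\langle 2,\alpha\rangle \cap (0,1)$. The swap case $h(0)=1$ yields, analogously, countably many further values of $\beta$ via the germ-at-$1$ homomorphism (now relating the slope of $f_\beta$ at $1$ to the data of $\Gamma_\alpha$). Thus each isomorphism class in $\mathcal{N}$ meets only countably many values of $\alpha$, so $\mathcal{N}$ contains continuum many isomorphism types, as claimed.
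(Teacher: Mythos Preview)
Your overall strategy coincides with the paper's: both invoke Rubin's theorem to realize any isomorphism $\Gamma_\alpha\to\Gamma_\beta$ as conjugation by a homeomorphism $h$ of $[0,1]$, and both then extract a countability constraint from the induced map on germs at an endpoint. The difference lies in how that germ information is processed.

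You assert that the germ of $h$ at $0$ is \emph{linear}, and flag this as the main obstacle. The claim is in fact true, but your sketch (``self-similarity of $F$ \ldots\ forcing $h$ to be PL near $0$'') does not supply the mechanism. Merely knowing that $hF_{[0,\epsilon]}h^{-1}$ lands in $\textup{PL}^+([0,1])$ does not by itself force $h$ to be PL: a map of the form $x\mapsto\mu x^c$ conjugates every linear germ $x\mapsto sx$ to the linear germ $x\mapsto s^c x$. A working argument proceeds in two stages: first, conjugating elements of $\Gamma_\alpha$ with slopes $2$ and $\alpha$ at $0$ yields functional equations $h(2x)=s_1h(x)$ and $h(\alpha x)=s_2h(x)$ near $0$, and density of $\langle 2,\alpha\rangle$ in $\mathbf{R}_{>0}$ forces $h(x)=\mu x^c$ on some $(0,\epsilon_0]$; second, conjugating an element of $F$ with a genuine breakpoint $x_0\in(0,\epsilon_0/3)$ (so that the affine piece just to the right of $x_0$ has nonzero constant term) produces $y\mapsto\mu\bigl((y/\mu)^{1/c}+x_0\bigr)^c$, which is nowhere affine unless $c=1$. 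Once $c=1$ your chain-rule conclusion $\langle 2,\alpha\rangle=\langle 2,\beta\rangle$ is valid.

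The paper bypasses this entirely. After passing to logarithmic coordinates the germ groups become rank-two translation groups $A_\alpha=\langle t\mapsto t+1,\ t\mapsto t+\log_2\alpha\rangle$ and $A_\beta$, and the germ of $h$ gives a topological conjugacy between them. Relative translation numbers are conjugacy invariants, so the induced isomorphism $A_\alpha\to A_\beta$, expressed in the given bases, lies in $GL_2(\mathbf Z)$; countability of $GL_2(\mathbf Z)$ then bounds the possible $\beta$ for each $\alpha$. This route never needs to identify the germ of $h$ precisely, whereas your route (once the gap is filled) yields the sharper statement that the germ subgroups $\langle 2,\alpha\rangle$ and $\langle 2,\beta\rangle$ literally coincide.
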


\begin{proof}
Assume by way of contradiction that there is an uncountable subset $I\subset (0,1)\setminus \mathbf{Z}[\frac{1}{2}]$ such that for all $\alpha,\beta\in I, \Gamma_{\alpha}\cong \Gamma_{\beta}$.
Note that the action of each $\Gamma_{\alpha}$ is locally dense, and so by Rubin's theorem for each $\alpha,\beta\in I$ there is a homeomorphism $\phi_{\alpha,\beta}:[0,1]\to[0,1]$
such that the map $$f\to \phi^{-1}_{\alpha,\beta}\circ f \circ \phi_{\alpha,\beta}\qquad f\in \Gamma_{\alpha}$$
induces an isomorphism between $\Gamma_{\alpha},\Gamma_{\beta}$.
(For the statement of Rubin's theorem and the definition of locally dense actions we refer the reader to Section $6.1$ of \cite{KimKoberdaLodha}.)

Next, note that the groups of germs at $0$ for $\Gamma_{\alpha},\Gamma_{\beta}$ are both isomorphic to $\mathbf{Z}^2$,
and can be viewed as the abelian groups of translations $$A_{\alpha}=\langle t\to t+1, t\to log_2(\alpha)\rangle\qquad A_{\beta}=\langle t\to t+1, t\to log_2(\beta)\rangle$$ 
Note that the homeomorphism $\phi_{\alpha,\beta}$ also induces a homeomorphism $\mathbf{R}\to \mathbf{R}$ on the germ at $0$, which in turn induces via a topological conjugacy, an isomorphism between $A_{\alpha},A_{\beta}$.
Using relative translation numbers and the fact that the group of automorphisms of $\mathbf{Z}^2$ is countable, we obtain a contradiction.
\end{proof}

\section{The construction}

We consider the additive group $\frac{1}{2}\mathbf{Z}=\{\frac{1}{2}k\mid k\in \mathbf{Z}\}$.
A \emph{labelling} is a map $$\rho:\frac{1}{2}\mathbf{Z}\to \{a,b,a^{-1},b^{-1}\}$$
which satisfies:
\begin{enumerate}
\item $\rho(k)\in\{a,a^{-1}\}$ for each $k\in \mathbf{Z}$.
\item $\rho(k)\in \{b,b^{-1}\}$ for each $k\in \frac{1}{2}\mathbf{Z}\setminus \mathbf{Z}$.
\end{enumerate}

We regard $\rho(\frac{1}{2}\mathbf{Z})$ as a bi-infinite word with respect to the usual ordering of the integers.
A subset $X\subseteq \frac{1}{2}\mathbf{Z}$ is said to be a \emph{block} if it is of the form $$\{k,k+\frac{1}{2},...,k+\frac{1}{2}n\}$$
for some $k\in \frac{1}{2}\mathbf{Z}, n\in \mathbf{N}$.
Note that each block is endowed with the usual ordering inherited from $\mathbf{R}$.
The set of blocks of $\frac{1}{2}\mathbf{Z}$ is denoted as $\mathbf{B}$.
To each block $X=\{k,k+\frac{1}{2},...,k+\frac{1}{2}n\}$, we assign a formal word $$W_{\rho}(X)=\rho(k)\rho(k+\frac{1}{2})...\rho(k+\frac{1}{2}n)$$
which is a word in the letters $\{a,b,a^{-1},b^{-1}\}$.

Recall that given a word $w_1...w_n$ in the letters $\{a,b,a^{-1},b^{-1}\}$, the formal inverse of the word is $w_n^{-1}...w_1^{-1}$.
The formal inverse of $W_{\rho}(X)$ is denoted as $W_{\rho}^{-1}(X)$.

A labelling $\rho$ is said to be \emph{quasi-periodic} if the following holds:
\begin{enumerate}
\item For each block $X\in \mathbf{B}$, there is an $n\in \mathbf{N}$ such that whenever $Y\in \mathbf{B}$ is a block of size at least $n$,
then $W_{\rho}(X)$ is a subword of $W_{\rho}(Y)$.
\item For each block $X\in \mathbf{B}$, there is a block $Y\in \mathbf{B}$ such that $W_{\rho}(Y)=W_{\rho}^{-1}(X)$.
\end{enumerate}
Note that by \emph{subword} in the above we mean a string of consecutive letters in the word.

A nonempty finite word $w_1...w_n$ for $w_i\in \{a,b,a^{-1},b^{-1}\}$ is said to be a \emph{permissible word} if $n$ is odd and the following holds.
For odd $i\leq n$, $w_i\in \{a,a^{-1}\}$ and for even $i\leq n$, $w_i\in \{b,b^{-1}\}$.


\begin{lem}\label{quasi-periodicLabellings}
Given any permissible word $w_1...w_m$, there is a quasi-periodic labelling $\rho$ of $\frac{1}{2}\mathbf{Z}$ and a block $X\in \mathbf{B}$ satisfying that $W_{\rho}(X)=w_1...w_m$.
\end{lem}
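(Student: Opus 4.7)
The plan is to construct $\rho$ as a direct limit of an ascending sequence of finite permissible words $v_0, v_1, v_2, \ldots$, each extending the previous one on both sides when placed on $\frac{1}{2}\mathbf{Z}$. Set $v_0 := w_1 w_2 \cdots w_m$ and inductively define
$$v_{i+1} := v_i \cdot b \cdot v_i^{-1} \cdot b \cdot v_i \cdot b \cdot v_i^{-1} \cdot b \cdot v_i,$$
where the four separator letters can be chosen arbitrarily from $\{b, b^{-1}\}$. Writing $m_i := |v_i|$, one has $m_{i+1} = 5m_i + 4$, so $m_i$ stays odd for all $i$, and an easy check shows that both $v_{i+1}$ and its formal inverse $v_{i+1}^{-1}$ are again permissible words.

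Place $v_0$ so that its $k$-th letter sits at position $(k-1)/2 \in \frac{1}{2}\mathbf{Z}$, and inductively place $v_{i+1}$ starting at position $p_{i+1} := p_i - m_i - 1$. Each $p_i$ is an integer (since $m_i + 1$ is even), which is compatible with $v_{i+1}$ being a permissible word. The shift $-(m_i + 1)$ is chosen precisely so that the middle copy of $v_i$ inside $v_{i+1}$, which occupies the letter-indices $2m_i + 3$ through $3m_i + 2$, lands at the same positions of $\frac{1}{2}\mathbf{Z}$ as the previously placed $v_i$. Hence the placements are consistent, and since $p_i \to -\infty$ while $p_i + (m_i - 1)/2 \to +\infty$ exponentially in $i$, their direct limit is a well-defined labelling $\rho : \frac{1}{2}\mathbf{Z} \to \{a, b, a^{-1}, b^{-1}\}$. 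By construction, the block $X := \{0, 1/2, \ldots, (m-1)/2\}$ satisfies $W_\rho(X) = w_1 \cdots w_m$.

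Axiom $(2)$ of quasi-periodicity is almost immediate: for any block $Y$ of $\rho$, its label word $u := W_\rho(Y)$ is a subword of some $v_i$, hence $u^{-1}$ is a subword of $v_i^{-1}$, which occurs inside $v_{i+1}$ beginning at an integer position of $\frac{1}{2}\mathbf{Z}$ (since $m_i$ is odd). This sub-placement determines a block of $\frac{1}{2}\mathbf{Z}$ whose $\rho$-label is precisely $u^{-1}$.

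Axiom $(1)$ reduces to a uniform gap bound: for each $i$, every $v_j$ with $j \geq i + 1$ contains consecutive occurrences of $v_i$ whose starting letter-indices differ by at most $2m_i + 2$. The base case $j = i + 1$ is read off directly, since $v_i$ appears at letter-indices $1, 2m_i + 3, 4m_i + 5$ of $v_{i+1}$. For the inductive step, one observes that $v_j$ begins and ends with a copy of $v_i$, while $v_j^{-1}$ has its first (resp.\ last) occurrence of $v_i$ at a fixed offset $m_i + 1$ from its start (resp.\ end), independent of $j$; checking each of the four seams inside $v_{j+1} = v_j b v_j^{-1} b v_j b v_j^{-1} b v_j$ then shows that the gap between the last $v_i$ before a seam and the first $v_i$ after it is again exactly $2m_i + 2$. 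This seam verification is the main technical step of the proof; once it is in hand, every subword $u$ of $\rho$ lies in some $v_i$, and thus appears in every block of $\rho$ of length at least $3m_i + 2$, establishing axiom $(1)$.
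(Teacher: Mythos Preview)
Your construction is a legitimate variant of the paper's approach: the paper uses a two-fold recursion, alternately extending left and right via $v_{n+1}=v_n^{-1}\,b\,v_n$ or $v_{n+1}=v_n\,b\,v_n^{-1}$, and declares the verification a ``straightforward exercise'' without details; your five-fold centered recursion is different but equally natural, and you supply considerably more of the verification.

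There is, however, a genuine gap in your argument for axiom~(1). Your inductive hypothesis asserts the $2m_i+2$ gap bound for occurrences of $v_i$ inside $v_j$, and your seam check handles the four transitions between the five pieces of $v_{j+1}$. But the two $v_j^{-1}$ pieces themselves are not covered: you locate an occurrence of $v_i$ near each end of $v_j^{-1}$, yet you never establish that consecutive occurrences of $v_i$ \emph{within the interior of} $v_j^{-1}$ are at most $2m_i+2$ apart, and since $m_j\gg m_i$ for $j>i+1$ this is not automatic. The fix is to strengthen the inductive hypothesis so that it asserts the gap bound simultaneously for $v_j$ and for $v_j^{-1}$. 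The base case for $v_{i+1}^{-1}=v_i^{-1}\,b^{-1}\,v_i\,b^{-1}\,v_i^{-1}\,b^{-1}\,v_i\,b^{-1}\,v_i^{-1}$ is immediate (copies of $v_i$ at letter-indices $m_i+2$ and $3m_i+4$, gap exactly $2m_i+2$), and since $v_{j+1}^{-1}$ has the same five-piece structure as $v_{j+1}$ with the roles of $v_j$ and $v_j^{-1}$ swapped and separators $b^{-1}$, your seam computation applies verbatim to the inverse as well.
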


\begin{proof}
We shall define the quasi-periodic labelling in an inductive manner. At Step $n+1$ of the process,
we shall produce a labelling $\rho_{n+1}$ defined on a block $B_{n+1}\in \mathbf{B}$
such that $$B_n\subset B_{n+1}\qquad \rho_{n+1}\restriction B_n=\rho_n$$
and 
$$\bigcup_{n\in \mathbf{N}}B_n=\frac{1}{2}\mathbf{Z}$$
The required labelling $\rho$ is then the unique labelling on $\frac{1}{2}\mathbf{Z}$ which satisfies that $\rho\restriction B_n=\rho_n$ for each $n\in \mathbf{N}$.
First we fix $$B_0=\{0,\frac{1}{2},...,\frac{1}{2}(m-1)\}\qquad \rho_0(\frac{1}{2}(i-1))=w_i\text{ for }1\leq i\leq m$$

For $n\in \mathbf{N}$ that is even, once we have defined the pair $\rho_n,B_n$, we define $$B_{n+1}=\{k_n-\frac{1}{2}(l_{n}+1),k_n-\frac{1}{2}l_n,...,k_n-1,k_n-\frac{1}{2}\}\cup B_n$$ where $$l_n=|B_n|\qquad k_n=inf(B_n)$$
and $\rho_{n+1}$ is defined as follows.
\begin{enumerate}
\item $\rho_{n+1}\restriction B_n=\rho_n$.
\item $\rho_{n+1}(k_n-\frac{1}{2})=b$
\item $\rho_{n+1}(k_n-\frac{1}{2}l)=(\rho_{n}(k_n+\frac{1}{2}(l-2))^{-1}$ if $1< l \leq l_n+1$.
\end{enumerate}

For $n\in \mathbf{N}$ that is odd, once we have defined the pair $\rho_n,B_n$, we define $$B_{n+1}=\{k_n+\frac{1}{2},k_n+1,...,k_n+\frac{1}{2}(l_n+1)\}\cup B_n$$ where $$l_n=|B_n|\qquad k_n=sup(B_n)$$
and $\rho_{n+1}$ is defined as follows.
\begin{enumerate}
\item $\rho_{n+1}\restriction B_n=\rho_n$.
\item $\rho_{n+1}(k_n+\frac{1}{2})=b$
\item $\rho_{n+1}(k_n+\frac{1}{2}l)=(\rho_{n}(k_n-\frac{1}{2}(l-2))^{-1}$ if $1< l \leq l_n+1$.
\end{enumerate}

It is a straightforward exercise to verify that this is a quasi-periodic labelling with the required additional feature.
\end{proof}

The above proposition provides a systematic method for producing quasi-periodic labellings of $\frac{1}{2}\mathbf{Z}$.
To each labelling $\rho$, we shall associate a group $G_{\rho}<\textup{Homeo}^+(\mathbf{R})$ as follows.

\begin{defn}\label{SimpleGroup}
Let $H<\textup{Homeo}^+([0,1])$ be the group defined in Definition \ref{H}.
Recall from Lemma \ref{3gen} that the group $H$ is generated by the three elements $\nu_1,\nu_2,\nu_3$ defined in Definition \ref{H}.
In what appears below, by $\cong_T$ we mean that the restrictions are topologically conjugate via the unique orientation preserving isometry that maps $[0,1]$ to the respective interval.
We define the homeomorphisms $$\zeta_1,\zeta_2,\zeta_3,\chi_1,\chi_2,\chi_3:\mathbf{R}\to \mathbf{R}$$ as follows for each $i\in \{1,2,3\}$ and $n\in \mathbf{Z}$:
$$\zeta_i\restriction [n,n+1]\cong_{T}\nu_i \qquad \text{ if } \rho(n+\frac{1}{2})=b$$
$$\zeta_i\restriction [n,n+1]\cong_T(\iota\circ \nu_i  \circ \iota) \qquad \text{ if }\rho(n+\frac{1}{2})=b^{-1}$$
$$\chi_i\restriction [n-\frac{1}{2},n+\frac{1}{2}]\cong_T \nu_i \qquad \text{ if }\rho(n)=a$$
$$\chi_i\restriction [n-\frac{1}{2},n+\frac{1}{2}]\cong_T (\iota\circ \nu_i \circ \iota)\qquad \text{ if }\rho(n)=a^{-1}$$

The group $G_{\rho}$ is defined as $$G_{\rho}:=\langle \zeta_1,\zeta_2,\zeta_3,\chi_1,\chi_2,\chi_3\rangle<\textup{Homeo}^+(\mathbf{R})$$
We denote the above generating set of $G_{\rho}$ as $$\mathbf{S}_{\rho}:=\{\zeta_1,\zeta_2,\zeta_3,\chi_1,\chi_2,\chi_3\}$$
We also define subgroups $$\mathcal{K}:=\langle \zeta_1,\zeta_2,\zeta_3\rangle\qquad \mathcal{L}:=\langle \chi_1,\chi_2,\chi_3\rangle$$ of $G_{\rho}$ that are both isomorphic to $H$.
We fix the isomorphisms, defined by the above, as:
$$\lambda:H\to \mathcal{K}\qquad \pi:H\to \mathcal{L}$$
where for each $f\in H, n\in \mathbf{Z}$.
 $$\lambda(f)\restriction [n,n+1]\cong_T f \qquad \text {if }\rho(n+\frac{1}{2})=b$$
$$\lambda(f)\restriction [n,n+1]\cong_T (\iota\circ f\circ \iota)\qquad \text{ if }\rho(n+\frac{1}{2})=b^{-1}$$
$$\pi(f)\restriction [n-\frac{1}{2},n+\frac{1}{2}]\cong_T f\qquad \text {if }\rho(n)=a$$
$$\pi(f)\restriction [n-\frac{1}{2},n+\frac{1}{2}]\cong_T (\iota\circ f\circ \iota)\qquad \text{ if }\rho(n)=a^{-1}$$
We also denote the naturally defined inverse isomorphisms as:
$$\lambda^{-1}:\mathcal{K}\to H\qquad \pi^{-1}:\mathcal{L}\to H$$
Note that the definition of $\mathcal{K},\mathcal{L}$ requires us to fix a labelling $\rho$ but we denote them as such for simplicity of notation.
\end{defn}

\begin{remark}\label{OtherLabellings}
Note that the group $G_{\rho}$ above is defined for any labelling of $\frac{1}{2}\mathbf{Z}$.
In our proof of simplicity, it shall become apparent why quasi-periodicity of the labelling is required.
But in general one may consider arbitrary labellings.
For instance, consider the labelling $\tau$ which maps every element of $\mathbf{Z}$ to $a$ and every element of $\frac{1}{2}\mathbf{Z}\setminus \mathbf{Z}$ to $b$.
The associated group $G_{\tau}$ is then the lift of the standard action of Thompson's group $T<\textup{PL}^+(\mathbf{S}^1)=\textup{PL}^+(\mathbf{R}/\mathbf{Z})$ to the real line.
Hence there is a short exact sequence $$1\to \mathbf{Z}\to G_{\tau}\to T\to 1$$
Here $\mathbf{Z}$ is the center of the group $G_{\tau}$ which corresponds to the subgroup of integer translations.
The group $G_{\tau}$ admits the following global description. It is the group of piecewise linear homeomorphisms $f$ of the real line that satisfy:
\begin{enumerate}
\item The set $\{x\in \mathbf{R}\mid f'(x)\text{ is not defined}\}$ is discrete and is a subset of $\mathbf{Z}[\frac{1}{2}]$.
\item At each point $x\in \mathbf{R}$ where $f'(x)$ is defined, it is an integer power of $2$.
\item $f$ commutes with integer translations.
\end{enumerate}

For this labelling, the group admits a non trivial quotient onto $T$, and hence it is not simple. However, for any labelling $\rho$, $G_{\rho}$ is \emph{perfect}, i.e. $G_{\rho}'=G_{\rho}$.
\end{remark}

\begin{prop}\label{commutator}
Let $\rho$ be any labelling of $\frac{1}{2}\mathbf{Z}$.
Then $G_{\rho}=\langle \mathcal{K}'\cup\mathcal{L}'\rangle$.
It follows that $G_{\rho}=G_{\rho}'$. 
\end{prop}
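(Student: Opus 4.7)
The plan is as follows. Since $\nu_2, \nu_3 \in F' = H'$, we have $\zeta_2, \zeta_3 \in \lambda(H') = \mathcal{K}'$ and $\chi_2, \chi_3 \in \pi(H') = \mathcal{L}'$ automatically, so the proposition reduces to showing $\zeta_1, \chi_1 \in \langle \mathcal{K}' \cup \mathcal{L}' \rangle$; by the $\mathcal{K}$--$\mathcal{L}$ symmetry of the construction it suffices to handle $\zeta_1 = \lambda(\nu_1)$. The second assertion $G_\rho = G_\rho'$ is then immediate, since $\mathcal{K}' = [\mathcal{K}, \mathcal{K}] \subseteq G_\rho'$ and $\mathcal{L}' \subseteq G_\rho'$ give $G_\rho = \langle \mathcal{K}' \cup \mathcal{L}' \rangle \subseteq G_\rho'$.

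The main trick I would use is to produce a single element that lies simultaneously in $\mathcal{L}'$ and in $\mathcal{K}$. First, choose a symmetric element $f \in F'$ (so $\iota f \iota = f$) that is differentiable at $1/2$ with $f'(1/2) = 2$; such an $f$ is easy to exhibit explicitly, e.g.\ by taking $f = \mathrm{id}$ outside $(1/8, 7/8)$, $f(t) = 2t - 1/2$ on a small dyadic neighbourhood of $1/2$, and interpolating symmetrically with PL pieces of slope $1/2$. Because $\iota f \iota = f$, the definition of $\pi$ forces $\pi(f) \restriction [n-1/2, n+1/2]$ to be the isometric transplant of $f$ irrespective of whether $\rho(n) = a$ or $a^{-1}$.

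The key claim is that $\pi(f) = \lambda(\varphi)$ for a specific $\varphi \in H$. Define
\[
\varphi(t) = \begin{cases} f(t + 1/2) - 1/2, & t \in [0, 1/2], \\ f(t - 1/2) + 1/2, & t \in [1/2, 1]. \end{cases}
\]
A direct check then yields: (i) $\varphi \in F$, with dyadic breakpoints and slopes powers of $2$ inherited from $f$; (ii) $\varphi$ is symmetric, as follows from $f(1-s) = 1 - f(s)$; and (iii) $\varphi$ has slope $2$ at both $0^+$ and $1^-$ (coming from $f'(1/2) = 2$), so $\varphi$ and $\nu_1$ share the same image $(1,1)$ in $F/F' \cong \mathbf{Z}^2$, whence $\varphi \in \langle F', \nu_1 \rangle = H$. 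The symmetry of $\varphi$ then makes $\lambda(\varphi) \restriction [n, n+1]$ equal to the isometric transplant of $\varphi$ regardless of $\rho(n+1/2)$, and matching the two halves $[n, n+1/2]$ and $[n+1/2, n+1]$ of $[n,n+1]$ against their respective $\mathcal{L}$-intervals $[n-1/2, n+1/2]$ and $[n+1/2, n+3/2]$ verifies $\pi(f) = \lambda(\varphi)$ pointwise on $\mathbf{R}$.

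With the identity $\pi(f) = \lambda(\varphi)$ in hand the conclusion is immediate:
\[
\zeta_1 \cdot \pi(f)^{-1} = \lambda(\nu_1) \cdot \lambda(\varphi^{-1}) = \lambda(\nu_1 \varphi^{-1}),
\]
and since $\nu_1 \varphi^{-1}$ has trivial image in $F/F'$ it lies in $F' = H'$, placing $\zeta_1 \pi(f)^{-1}$ inside $\mathcal{K}'$. Therefore $\zeta_1 = \bigl(\zeta_1 \pi(f)^{-1}\bigr) \cdot \pi(f) \in \mathcal{K}' \cdot \mathcal{L}' \subseteq \langle \mathcal{K}' \cup \mathcal{L}' \rangle$, and the mirror argument (applying the analogous identity $\lambda(f) = \pi(\varphi)$ to $\chi_1 = \pi(\nu_1)$) gives $\chi_1 \in \mathcal{L}' \cdot \mathcal{K}'$. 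The main technical hurdle is verifying $\pi(f) = \lambda(\varphi)$; what makes this go through for an arbitrary labelling is the twin symmetry of $f$ and $\varphi$ about $1/2$, which eliminates the dependence of each side on the orientation data encoded by $\rho$.
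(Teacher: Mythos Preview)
Your argument is correct, but the paper's proof is shorter and more direct. The paper exploits the specific choice of $\nu_1$ made in Definition~\ref{H}: since $\mathrm{Supp}(\nu_1)\subset(0,\tfrac14)\cup(\tfrac34,1)$, the element $\zeta_1=\lambda(\nu_1)$ has support whose closure lies in $\bigcup_{n\in\mathbf Z}(n-\tfrac12,n+\tfrac12)$, and its restriction to each $\mathcal L$-interval $(n-\tfrac12,n+\tfrac12)$ is compactly supported and symmetric (the left half is the $c_1$-piece of one $\mathcal K$-interval and the right half is the $c_0$-piece of the next, and $c_1=\iota c_0\iota$). Hence $\zeta_1$ is literally equal to $\pi(g)$ for a single symmetric $g\in H'=F'$, so $\zeta_1\in\mathcal L'$ outright; dually $\chi_1\in\mathcal K'$. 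No decomposition into a $\mathcal K'\cdot\mathcal L'$ product is needed.

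Your route, by contrast, does not use that $\nu_1$ is supported away from $\tfrac12$; you instead manufacture an auxiliary symmetric $f\in F'$ with prescribed slope at $\tfrac12$ and a matching $\varphi\in H$ so that the ``half-shift'' identity $\pi(f)=\lambda(\varphi)$ holds, and then peel off $\zeta_1\pi(f)^{-1}\in\mathcal K'$. This is a genuinely different (and slightly more robust) mechanism: it would still work for any symmetric $\nu_1$ with slope $2$ at the endpoints, whereas the paper's one-line argument leans on the particular support of $\nu_1$. The trade-off is that the paper gets the stronger conclusion $\zeta_1\in\mathcal L'$ (not merely $\zeta_1\in\mathcal K'\cdot\mathcal L'$) with essentially no computation.
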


\begin{proof}
Recall from Lemma \ref{3gen} that the group $H$ satisfies that $H'$ is simple and consists of precisely the set of elements that are compactly supported in $(0,1)$.
In particular, the generators $\nu_2,\nu_3$ lie in $H'$. 
It is clear from the definition that $\zeta_2,\zeta_3\in \mathcal{K}'$ and $\chi_2,\chi_3\in \mathcal{L}'$.
We shall observe that $\zeta_1\in \mathcal{L}'$ and $\chi_1\in \mathcal{K}'$.


Note that the closure of the support of $\zeta_1$ is contained in the set $$\bigcup_{n\in \mathbf{Z}} (n-\frac{1}{2},n+\frac{1}{2})$$
and $$\zeta_1\restriction (n-\frac{1}{2},n+\frac{1}{2})\cong_Tf$$ for a symmetric element $f\in H'$.
Since the restriction of $\zeta_1$ is symmetric for each such interval, we conclude that $\zeta_1$ lies in $\pi(H')$ and hence $\zeta_1\in \mathcal{L}'$.
Similarly, $\chi_1\in \mathcal{K}'$.
\end{proof}

\section{The core of the proof of simplicity}

In this section we shall reduce the proof of Theorem \ref{main} to Proposition \ref{mainprop}, which is formulated below and proved in Section $5$.
First, we take a small but important detour to remark the following.

\begin{prop}\label{freesubgroups}
For any labelling $\rho$, the group $G_{\rho}$ contains non abelian free subgroups.
\end{prop}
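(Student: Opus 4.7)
The plan is to prove Proposition~\ref{freesubgroups} by exhibiting two elements of $G_\rho$ generating a rank-two free subgroup via a ping-pong argument.

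\textbf{Key dynamical observation.} The element $\zeta_1 \in G_\rho$ has a hyperbolic repelling fixed point of slope $2$ at every integer. Indeed, $\nu_1 = c_0 c_1$ coincides with $t \mapsto 2t$ on $(0, \tfrac{1}{16})$ by item~(2) of Definition~\ref{H}, and by symmetry of $\nu_1$ it coincides with $t \mapsto 2t - 1$ on $(\tfrac{15}{16}, 1)$; the labelling-induced flip is immaterial since $\nu_1 = \iota \nu_1 \iota$. Hence $\zeta_1$ agrees with the affine map $t \mapsto n + 2(t-n)$ on a two-sided neighbourhood of each $n \in \mathbf{Z}$, while remaining the identity on each ``dead zone'' $[n+\tfrac14, n+\tfrac34]$.

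\textbf{Choice of the companion element.} The naive candidate $\chi_1$ must be ruled out: a direct inspection shows $Supp(\zeta_1) \subset \bigcup_n (n-\tfrac14, n+\tfrac14)$ while $Supp(\chi_1) \subset \bigcup_n (n+\tfrac14, n+\tfrac34)$, so the two elements commute. I would therefore take $h := \zeta_2 \chi_1 \zeta_2^{-1}$. A short computation gives $\zeta_2(\tfrac12) = \tfrac{9}{32} \in (\tfrac14, \tfrac34)$, so each hyperbolic repelling fixed point of $h$ lies strictly inside a dead zone of $\zeta_1$, i.e.\ where $\zeta_1$ is the identity. Symmetrically, since $\chi_1$ and $\zeta_2$ are both the identity in a neighbourhood of every integer, so is $h$. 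Yet the supports of $\zeta_1$ and $h$ do overlap (e.g.\ on $(\tfrac{5}{32},\tfrac14)$ inside $[0,1]$), so these elements do not commute.

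\textbf{Ping-pong.} Let $U$ be a union of small symmetric open intervals of radius $\delta$ around the integers (the hyperbolic sources of $\zeta_1$) and let $V$ be the analogous union around the points $\zeta_2(\tfrac12+\mathbf{Z})$. For $\delta$ small and $N$ large, the local linearisation makes $\zeta_1^{\pm N}$ expand each component of $U$ by $\approx 2^N$, and analogously $h^{\pm N}$ expands each component of $V$. The aim is to verify, by a quantitative analysis of the iterated linear dynamics, the standard ping-pong containments for suitable powers $\zeta_1^N, h^N$ (after possibly composing with auxiliary conjugators to transport iterated images across the dead zones); the ping-pong lemma then yields $\langle \zeta_1^N, h^N\rangle \cong F_2$.

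\textbf{Main obstacle.} The key technical difficulty is that iterates of $\zeta_1$ do not push points arbitrarily far: orbits are captured by the parabolic sinks at $n \pm \tfrac14$, and $\zeta_1$ is moreover the identity on $V$. Hence a direct ping-pong with the two sets $U, V$ above actually fails in its simplest form, and one is forced to either replace $\zeta_1^N, h^N$ by more elaborate words that do transport $V$ into $U$ (and back), or to reposition the hyperbolic sources by an additional conjugation so that the expanding dynamics genuinely connect the two basins. Identifying this precise combination --- exploiting the rich supply of non-integer fixed points provided by both $\mathcal{K}$ and $\mathcal{L}$ --- is the crux of the argument.
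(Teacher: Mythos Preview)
Your proposal does not constitute a proof: you yourself identify the ``main obstacle'' and then stop short of resolving it. The difficulty you name is real and fatal for your choice of generators. Since $\zeta_1$ is the identity on each $[n+\tfrac14,n+\tfrac34]$, no power of $\zeta_1$ can ever move a point of $V$ (your neighbourhoods of the repelling sources of $h$, which lie inside these dead zones) anywhere at all; the ping-pong containment $\zeta_1^{\pm N}(V)\subset U$ simply fails. Replacing $\zeta_1^N$ by ``more elaborate words'' or adding ``an additional conjugation'' is not a proof step but a hope, and you give no indication of what such words would be.

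The paper avoids this entirely by a much simpler choice of elements. Rather than using $\nu_1$ (two bumps with a central dead zone), one takes a single-bump element $f\in F'=H'$ with $Supp(f)=(\tfrac{1}{16},\tfrac{15}{16})$ and $x\cdot f>x$ on its support, strong enough (after passing to a power) that $[\tfrac{6}{16},\tfrac{10}{16}]\cdot f\subset(\tfrac{14}{16},\tfrac{15}{16})$ and $[\tfrac{6}{16},\tfrac{10}{16}]\cdot f^{-1}\subset(\tfrac{1}{16},\tfrac{2}{16})$. Then $\lambda(f)$ pushes neighbourhoods of half-integers toward integers, and $\pi(f)$ pushes neighbourhoods of integers toward half-integers; the sets $A=\bigcup_n[n-\tfrac{2}{16},n+\tfrac{2}{16}]$ and $B=\bigcup_n[n+\tfrac12-\tfrac{2}{16},n+\tfrac12+\tfrac{2}{16}]$ form a ping-pong table for $\lambda(f),\pi(f)$ with no further work. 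The symmetry of the interval $[\tfrac{6}{16},\tfrac{10}{16}]$ under $\iota$ makes the labelling irrelevant. The moral: use elements of $\mathcal{K}'$ and $\mathcal{L}'$ coming from a \emph{single} bump in $F'$, so that there is no dead zone to cross.
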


\begin{proof}
Let $f\in F'=H'$ be an element such that $Supp(f)=[\frac{1}{16},\frac{15}{16}]$ and $x\cdot f>x$ for each $x\in (\frac{1}{16},\frac{15}{16})$.
Moreover, by replacing $f$ by a power of $f$ if necessary, we also assume that $$[\frac{6}{16},\frac{10}{16}]\cdot f\subset (\frac{14}{16},\frac{15}{16})\qquad [\frac{6}{16},\frac{10}{16}]\cdot f^{-1}\subset (\frac{1}{16},\frac{2}{16})$$
Then the group $\langle \lambda(f),\pi(f)\rangle$ is free since the sets $$\bigcup_{n\in \mathbf{Z}} [n-\frac{2}{16},n+\frac{2}{16}]\qquad \bigcup_{n\in \mathbf{Z}} [n-\frac{1}{2}-\frac{2}{16},n-\frac{1}{2}+\frac{2}{16}]$$
form a ping pong table for the so called \emph{ping pong lemma}. (For the statement of this Lemma, we refer the reader to \cite{DeLaHarpe}.)
\end{proof}

We now develop some notation and define a certain family of actions which shall be useful in the proof.
In the rest of the article we shall fix a quasi-periodic labelling $\rho$. 

\begin{defn}
For every $X\subseteq \frac{1}{2}\mathbf{Z}\setminus \mathbf{Z}$, we define a group action $\mathcal{K}_{X}<\textup{Homeo}^+(\mathbf{R})$ by the representation 
$$\lambda_X:H\to \textup{Homeo}^+(\mathbf{R})$$
defined for each $f\in H$ as 
$$\lambda_X(f)\restriction [n,n+1]=\lambda(f)\restriction [n,n+1]\qquad \text{ if }n+\frac{1}{2}\in X$$
$$\lambda_X(f)\restriction [n,n+1]=id\restriction [n,n+1]\qquad \text{ if }n+\frac{1}{2}\notin X$$
Similarly, for every $X\subseteq \mathbf{Z}$, we define a group action $\mathcal{L}_{X}<\textup{Homeo}^+(R)$ by the representation 
$$\pi_X:H\to \textup{Homeo}^+(\mathbf{R})$$
defined for each $f\in H$ as 
$$\pi_X(f)\restriction  [n-\frac{1}{2},n+\frac{1}{2}]=\pi(f)\restriction [n-\frac{1}{2},n+\frac{1}{2}]\qquad \text{ if }n\in X$$
$$\pi_X(f)\restriction  [n-\frac{1}{2},n+\frac{1}{2}]=id\restriction  [n-\frac{1}{2},n+\frac{1}{2}]\qquad \text{ if }n\notin X$$

If $X\neq \emptyset$, there are naturally defined inverse isomorphisms which we denote as:
$$\lambda_X^{-1}:\mathcal{K}_X\to H\qquad \pi_X^{-1}:\mathcal{L}_X\to H$$
Note that  $\mathcal{K}_{\frac{1}{2}\mathbf{Z}\setminus \mathbf{Z}}=\mathcal{K}$ and $\mathcal{L}_{\mathbf{Z}}=\mathcal{L}$.
\end{defn}

These groups shall play an important role in the argument, and the following are some basic facts about them.

\begin{prop}\label{specialgroups}
The following holds.
\begin{enumerate}
\item For any $X,Y\subseteq \frac{1}{2}\mathbf{Z}\setminus \mathbf{Z}$, $$\mathcal{K}_{X\cup Y}'<\langle \mathcal{K}_X'\cup \mathcal{K}_Y'\rangle$$
\item For any $X,Y\subseteq \mathbf{Z}$, $$\mathcal{L}_{X\cup Y}'<\langle \mathcal{L}_X'\cup\mathcal{L}_Y'\rangle$$
\end{enumerate}
\end{prop}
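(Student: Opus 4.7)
The plan is to prove part (1); part (2) follows by the same argument with $\mathcal{L}, \pi, \mathbf{Z}$ in place of $\mathcal{K}, \lambda, \frac{1}{2}\mathbf{Z}\setminus\mathbf{Z}$. Partition $X \cup Y$ as $A \sqcup B \sqcup C$ where $A = X \setminus Y$, $B = Y \setminus X$, $C = X \cap Y$, and set $N := \langle \mathcal{K}_X' \cup \mathcal{K}_Y' \rangle$.

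First I would show that $\mathcal{K}_C' \leq N$ by computing, for any $g,h \in H'$, the commutator $[\lambda_X(g), \lambda_Y(h)]$ interval by interval. On each interval $[n,n+1]$ with $n+\frac{1}{2} \in A$ (resp.\ $B$) one of the two factors is the identity and so the commutator is trivial there. On each interval with $n+\frac{1}{2} \in C$ both factors act via the convention dictated by $\rho$; a short check tracking the $\iota$-conjugation gives that the commutator restricted to such an interval realizes $[g,h]$ when $\rho(n+\tfrac12)=b$ and $\iota[g,h]\iota$ when $\rho(n+\tfrac12)=b^{-1}$. Thus $[\lambda_X(g), \lambda_Y(h)] = \lambda_C([g,h])$. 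Since $H'$ is simple and non-abelian by Lemma \ref{3gen}, it is perfect, so such commutators generate it, and therefore $\mathcal{K}_C' = \lambda_C(H') \leq N$.

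Next I would deduce $\mathcal{K}_A' \leq N$ and $\mathcal{K}_B' \leq N$. For any $f \in H'$, the element $\lambda_X(f) \cdot \lambda_C(f^{-1})$ acts as $f$ on every $A$-interval and as the identity on every $B$- and $C$-interval, so it coincides with $\lambda_A(f)$; hence $\lambda_A(f) \in N$. The symmetric calculation using $\lambda_Y$ in place of $\lambda_X$ gives $\lambda_B(f) \in N$.

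Finally, for any $f \in H'$, the three elements $\lambda_A(f), \lambda_B(f), \lambda_C(f)$ have pairwise disjoint supports and their product is exactly $\lambda_{X \cup Y}(f)$, which is therefore in $N$. This yields $\mathcal{K}_{X\cup Y}' \leq N$, as desired. The only subtle step is the interval-by-interval commutator identity $[\lambda_X(g), \lambda_Y(h)] = \lambda_C([g,h])$, which requires consistent tracking of the $\iota$-flips introduced by the labelling; everything else is algebraic bookkeeping using the disjointness of supports.
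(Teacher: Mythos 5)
Your proof is correct, and its key step differs from the paper's. Both arguments reduce to establishing $\mathcal{K}_{X\cap Y}'\leq N:=\langle\mathcal{K}_X'\cup\mathcal{K}_Y'\rangle$; after that, peeling off $\mathcal{K}_{X\setminus Y}'$ and $\mathcal{K}_{Y\setminus X}'$ and recombining the three pieces along disjoint supports is essentially the same bookkeeping in your write-up and in the paper. For the intersection step, however, the paper takes $f=\lambda_{X\cap Y}^{-1}(g)$, conjugates it by a power of $\nu_1$ into $F_{[\frac{1}{16},\frac{15}{16}]}'$, writes the result as an alternating product of syllables in $\nu_2,\nu_3$ whose exponent sums both vanish, and then applies $\lambda_X$ and $\lambda_Y$ alternately to the syllables; triviality on the intervals outside $X\cap Y$ comes from the vanishing exponent sums (and requires a small parity caveat when the number of syllables is odd). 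You instead use the identity $[\lambda_X(g),\lambda_Y(h)]=\lambda_{X\cap Y}([g,h])$ for $g,h\in H'$ --- valid because every element of every $\mathcal{K}_Z$ preserves each interval $[n,n+1]$, and on intervals indexed by $X\cap Y$ the two factors carry the same $\iota$-twist, so the commutator of the twisted copies is the twisted copy of the commutator --- and then invoke perfectness of $H'$ (simple and non-abelian, by Lemma \ref{3gen}) to conclude $\mathcal{K}_{X\cap Y}'=\lambda_{X\cap Y}(H')\leq N$. Your route is shorter, avoids the syllable decomposition and the specific generators $\nu_1,\nu_2,\nu_3$ altogether, and uses only that $H'$ is perfect, whereas the paper's argument is a more hands-on word computation; both yield complete proofs of the proposition, and part (2) follows by the same symmetry in either case.
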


\begin{proof}
We will prove the first part. 
The proof of the second part is similar.

If $X,Y$ are disjoint, then it is easy to see that $$\mathcal{K}_{X\cup Y}'< \langle \mathcal{K}_{X}'\cup\mathcal{K}_Y'\rangle$$
Also, if $X\subset Y$, then it is easy to see that $$\mathcal{K}_{Y\setminus X}'< \langle \mathcal{K}_{X}'\cup\mathcal{K}_Y'\rangle$$
{\bf Claim}: For any nonempty $X,Y\subseteq \mathbf{Z}$ it is true that $\mathcal{K}_{X\cap Y}'< \langle \mathcal{K}_{X}'\cup\mathcal{K}_Y'\rangle$.

{\bf Proof}: Given $g\in \mathcal{K}_{X\cap Y}'$, let $$f=\lambda_{X\cap Y}^{-1}(g)\in H'$$
Note that there is an $m\in \mathbf{N}$ such that $$\nu_1^{-m}f\nu_1^{m}\in F_{[\frac{1}{16},\frac{15}{16}]}'$$
We can find a word $W=w_1...w_n\in F_{[\frac{1}{16},\frac{15}{16}]}'$ in $\nu_2,\nu_3,\nu_2^{-1},\nu_3^{-1}$ such that $$w_1...w_n=\nu_1^{-m}f\nu_1^{m}$$
Moreover we assume that for each $i$, either $w_i=\nu_2^{l_i}$ or $w_i=\nu_3^{l_i}$ for for some $l_i\in \mathbf{Z}\setminus \{0\}$, and if $i<n$, $w_{i+1}$ is an integer power of a letter in $\{\nu_2,\nu_3\}$ that is different from the letter that $w_i$ is an integer power of.
Since $W\in F_{[\frac{1}{16},\frac{15}{16}]}'$, both the sum of powers of $\nu_2$ and the sum of powers of $\nu_3$ in $W$ equals $0$.
We assume that $n$ is even below, and if $n$ is odd then the expression below admits a suitable modification to establish the claim.
Then it follows that $$\lambda_X(\nu_1^{-m})(\lambda_{X}(w_1)\lambda_{Y}(w_2)\lambda_X(w_3)...\lambda_{Y}(w_n))\lambda_X(\nu_1^m)$$ 
$$=\lambda_{X\cap Y}(\nu_1^{-m}) (\lambda_{X\cap Y}(w_1)\lambda_{X\cap Y}(w_2)...\lambda_{X\cap Y}(w_n))\lambda_{X\cap Y}(\nu_1^m)$$ $$=g\in \mathcal{K}_{X\cap Y}'$$
since the homeomorphism corresponding to the word $$\lambda_X(\nu_1^{-m})(\lambda_{X}(w_1)\lambda_{Y}(w_2)\lambda_X(w_3)...\lambda_{Y}(w_n))\lambda_X(\nu_1^m)$$
is trivial on the intervals $$\{[n,n+1]\mid n+\frac{1}{2}\notin X\cap Y\}$$
Our claim follows.

It follows from the claim and the observations before the claim that $$\mathcal{K}_{X\setminus (X\cap Y)}'=\mathcal{K}_{X\setminus Y}'< \langle \mathcal{K}_{X}'\cup\mathcal{K}_Y'\rangle\qquad \mathcal{K}_{Y\setminus (X\cap Y)}'=\mathcal{K}_{Y\setminus X}'< \langle \mathcal{K}_{X}'\cup\mathcal{K}_Y'\rangle$$
and hence $$\mathcal{K}_{X\cup Y}'<\langle \mathcal{K}_X'\cup\mathcal{K}_Y'\rangle$$
\end{proof}

\begin{lem}\label{conjugation}
Let $N\leq G_{\rho}$ be a normal subgroup. Then the following holds.
\begin{enumerate}
\item If $N\cap \mathcal{K}_X'\neq \{id\}$ for some $X\subseteq \frac{1}{2}\mathbf{Z}\setminus \mathbf{Z}$, then $\mathcal{K}_X'<N$.
\item If $N\cap \mathcal{L}_X'\neq \{id\}$ for some $X\subseteq \mathbf{Z}$, then $\mathcal{L}_X'<N$.
\end{enumerate}
\end{lem}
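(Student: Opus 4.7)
The plan is to use the simplicity of $H'$ (Lemma \ref{3gen}) together with the observation that conjugation by elements of the ambient subgroup $\mathcal{K}<G_{\rho}$ realises, on the localised copy $\mathcal{K}_X$, precisely the conjugation action of $H$ on itself. Since the hypothesis forces $X\neq\emptyset$, the map $\lambda_X\colon H\to \mathcal{K}_X$ is an isomorphism; I would fix a non-identity $g\in N\cap \mathcal{K}_X'$ and set $f_0=\lambda_X^{-1}(g)\in H'\setminus\{\id\}$.

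The key computation is the identity
$$\lambda(f_1)^{-1}\cdot g\cdot \lambda(f_1)=\lambda_X(f_1^{-1}f_0 f_1)\qquad \text{for every }f_1\in H,$$
which I would verify interval by interval. On $[n,n+1]$ with $n+\frac{1}{2}\notin X$, both sides act as the identity. On $[n,n+1]$ with $n+\frac{1}{2}\in X$, the restrictions of $g$ and of $\lambda(f_1)$ are either both untwisted or both twisted by $\iota$ (according to whether $\rho(n+\frac{1}{2})=b$ or $b^{-1}$), so the $\iota$-twists cancel in the conjugation and the restriction of the conjugate coincides with the prescribed $\iota$-twist of $f_1^{-1}f_0 f_1$, i.e.\ with the restriction of $\lambda_X(f_1^{-1}f_0 f_1)$. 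Because $\lambda(f_1)\in \mathcal{K}<G_{\rho}$ and $N$ is normal in $G_{\rho}$, it follows that $\lambda_X(f_1^{-1}f_0 f_1)\in N$ for every $f_1\in H$.

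To finish (1) I would invoke simplicity: since $H'$ is simple and $f_0\in H'\setminus\{\id\}$, the normal closure of $f_0$ in $H$ is all of $H'$, so any $f\in H'$ can be written as $f=\prod_i f_i^{-1}f_0^{\epsilon_i}f_i$ with $f_i\in H$ and $\epsilon_i\in\{\pm 1\}$. Applying the homomorphism $\lambda_X$ and the identity above gives
$$\lambda_X(f)=\prod_i \lambda_X(f_i^{-1}f_0^{\epsilon_i}f_i)=\prod_i \lambda(f_i)^{-1}g^{\epsilon_i}\lambda(f_i)\in N,$$
so $\mathcal{K}_X'=\lambda_X(H')\subseteq N$. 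Part (2) is proved by the same argument with $\pi$ in place of $\lambda$ and $\mathcal{L}$ in place of $\mathcal{K}$, using $\rho(n)\in\{a,a^{-1}\}$ in the role previously played by $\rho(n+\frac{1}{2})\in\{b,b^{-1}\}$.

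The only delicate point is the interval-by-interval verification of the conjugation identity: one needs to check that whenever $\lambda(f_1)$ acquires an $\iota$-twist on a given interval the same twist is present on $g$, so that the formula is independent of the labelling. Once this identity is in hand, simplicity of $H'$ promotes any single non-identity element of $N\cap \mathcal{K}_X'$ to all of $\mathcal{K}_X'$ with no further effort, and in particular no use of quasi-periodicity of $\rho$ is needed at this stage.
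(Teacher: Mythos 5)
Your proposal is correct and follows essentially the same route as the paper: conjugate the given nonidentity element of $N\cap\mathcal{K}_X'$ by $\lambda$-images of elements of $H$, use the identity $\lambda(f_1)^{-1}\lambda_X(f_0)\lambda(f_1)=\lambda_X(f_1^{-1}f_0f_1)$ together with normality of $N$ in $G_\rho$, and then invoke simplicity of $H'$ to conclude that all of $\lambda_X(H')=\mathcal{K}_X'$ lies in $N$. The only cosmetic difference is that the paper conjugates by elements of $H'$ rather than of $H$, which avoids the (easy) remark that the normal closure of $f_0$ in $H$ is still $H'$.
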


\begin{proof}
We shall prove the first part. The proof of the second part is similar.
Let $g\in N\cap \mathcal{K}_X'$ be a nonidentity element.
Let $h=\lambda_X^{-1}(g)\in H'$.
Recall that $H'\cong \mathcal{K}_X'$ is simple, so it suffices to show that for any $h_1\in H'$, $\lambda_X(h_1^{-1} h h_1)\in N$.
This is true since $$\lambda(h_1^{-1})\lambda_X(h)\lambda(h_1)\in N$$ and $$\lambda(h_1^{-1})\lambda_X(h)\lambda(h_1)=\lambda_X(h_1^{-1})\lambda_X(h)\lambda_X(h_1)=\lambda_X(h_1^{-1} h h_1)$$
\end{proof}

The following Proposition is at the technical core of the article.
This will be proved in the subsequent section.

\begin{prop}\label{mainprop}
Given a nonidentity element $f\in G_{\rho}$, let $\langle \langle f\rangle \rangle_{G_{\rho}}$ be the normal closure of $f$ in $G_{\rho}$.
Then there are sets $$X_1,...,X_n\subset \frac{1}{2}\mathbf{Z}\setminus \mathbf{Z}\qquad Y_1,...,Y_m\subset \mathbf{Z}$$ such that:
\begin{enumerate}
\item $\bigcup_{1\leq i\leq n} X_i=\frac{1}{2}\mathbf{Z}\setminus \mathbf{Z}$ and $\langle \langle f\rangle \rangle_{G_{\rho}}\cap \mathcal{K}_{X_i}'\neq \{id\}\text{ for all }1\leq i\leq n$.
\item $\bigcup_{1\leq i\leq m} Y_i=\mathbf{Z}$ and $\langle \langle f\rangle \rangle_{G_{\rho}}\cap \mathcal{L}_{Y_j}'\neq \{id\}\text{ for all }1\leq j\leq m$.
\end{enumerate}

\end{prop}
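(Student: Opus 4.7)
The plan is to reduce Proposition \ref{mainprop} to two tasks: first, extract a nontrivial element of $\langle\langle f\rangle\rangle$ lying in some $\mathcal{K}'_X$ or $\mathcal{L}'_Y$; second, use the quasi-periodicity of $\rho$ to ensure the collection of witnessing sets so obtained can be chosen finite and covering.

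\textbf{Step 1 (small-support extraction).} Since $f \ne \mathrm{id}$, pick $x_0 \in \mathbf{R}$ with $x_0 \cdot f \ne x_0$. After possibly replacing $f$ by a conjugate $g f g^{-1}$ for $g \in G_\rho$---to move the relevant nontriviality into a unit interval where $f$ has small enough displacement---one obtains a small open interval $J$ with $\overline{J} \cup \overline{J \cdot f}$ contained in some open interval $U$ of the form $(n,n+1)$ or $(n-\tfrac{1}{2},n+\tfrac{1}{2})$, and with $J \cap J \cdot f = \emptyset$. Using Lemma \ref{TransitiveStandard}, pick a nontrivial $h \in \mathcal{K}'_{\{n+1/2\}}$ (in the first case) whose support lies inside $J$. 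Then $[h,f] \in \langle\langle f \rangle\rangle$ is nontrivial with support contained in $J \cup (J \cdot f) \subset U$. Because every element of $G_\rho$ is piecewise linear with dyadic breakpoints and power-of-$2$ slopes compatible with the $\tfrac{1}{2}\mathbf{Z}$-lattice, the restriction of $[h,f]$ to $U$ coincides under the natural isometry $U \to [0,1]$ with an element of $F' = H'$; hence $[h,f] \in \mathcal{K}'_{\{n+1/2\}}$. The second case is symmetric and yields an element of $\mathcal{L}'_{\{n\}}$.

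\textbf{Step 2 (spreading via quasi-periodicity).} To complete the proof, one must exhibit, for every site $m+\tfrac{1}{2} \in \tfrac{1}{2}\mathbf{Z}\setminus\mathbf{Z}$, some set $X \ni m+\tfrac{1}{2}$ for which $\langle\langle f\rangle\rangle \cap \mathcal{K}'_X \ne \{\mathrm{id}\}$, and dually for integer sites. Because conjugation preserves $\langle\langle f\rangle\rangle$, it suffices to find $\phi \in G_\rho$ whose action moves the support of the element produced in Step 1 into an interval near $m+\tfrac{1}{2}$. Here quasi-periodicity enters essentially: by the first axiom, the labelled pattern of $\rho$ near $n+\tfrac{1}{2}$ recurs near every site (with a pattern-dependent bound on the gap), and by the second (inverse) axiom both orientations occur. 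One constructs $\phi$ as a carefully chosen word in the $\zeta_i$ and $\chi_j$ whose composite effect on a bounded portion of $\mathbf{R}$ emulates the translation from the source pattern to a recurring target pattern, leveraging that $F'$ acts richly on standard dyadic subintervals (Lemmas \ref{TransitiveStandard} and \ref{TransitiveStandard2}). Since only finitely many distinct patterns of a given bounded length appear in $\rho$, a pigeonhole argument ensures that finitely many such $X$'s already cover $\tfrac{1}{2}\mathbf{Z}\setminus\mathbf{Z}$; the analogous construction produces the $Y_j$'s.

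\textbf{Main obstacle.} The principal difficulty lies in Step 2. Unlike the non-simple example $G_\tau$ of Remark \ref{OtherLabellings}, $G_\rho$ admits no global translations, so the conjugating elements $\phi$ must be assembled piece by piece from the generators. The quasi-periodicity axioms are exactly what makes this assembly feasible: every finite local pattern recurs in both orientations, so one can emulate a translation within a bounded region by a product of generators whose action on that region matches the target pattern. Working out the precise combinatorics of which products realize which local shifts, and organising the resulting witnesses into finitely many cover-sets $X_i$ and $Y_j$, is the technical heart of the argument and where the quasi-periodicity hypothesis (as opposed to periodicity or arbitrary labellings) is indispensable.
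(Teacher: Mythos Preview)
There is a genuine gap in Step~1 that undermines the entire proposal. You pick a nontrivial $h\in\mathcal{K}'_{\{n+1/2\}}$ with support in $J\subset(n,n+1)$ and form $[h,f]$. But for $[h,f]$ to lie in $\langle\langle f\rangle\rangle_{G_\rho}$ you need $h\in G_\rho$, and this fails: no nontrivial element of $G_\rho$ has compact support. Indeed, by Corollary~\ref{Phiimages} (or the argument behind Lemma~\ref{type}), the restriction of any $g\in G_\rho$ to a unit interval $[m,m+1]$ is determined by the $\rho$-labelling on a block of bounded length around $m$; by part~(1) of quasi-periodicity that block pattern recurs infinitely often, so nontriviality on one interval forces nontriviality on infinitely many. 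Thus $\mathcal{K}'_{\{n+1/2\}}\cap G_\rho=\{\mathrm{id}\}$, and your commutator is not available inside the normal closure. The same obstruction invalidates Step~2: there are no conjugating elements $\phi\in G_\rho$ that carry a compactly supported element to another compactly supported element, because neither exists.

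The paper's route is structurally different and is designed precisely to get around this. One first lands only in the subgroup $\mathcal{Z}$ of elements fixing $\mathbf{Z}$ pointwise (Lemma~\ref{SpecialElements}), by commuting $f$ with a \emph{global} element $\lambda(g)\in\mathcal{K}'\subset G_\rho$ whose support avoids both $\mathbf{Z}$ and $\mathbf{Z}\cdot f$. Such an element of $\mathcal{Z}$ typically has several distinct ``atoms'' (local actions on unit intervals), not one. The finiteness of the set of atoms (Lemma~\ref{type}) and the recurrence of each atom's flag set with bounded gaps (Lemma~\ref{SpecialElements0}) are where quasi-periodicity is actually used, yielding finitely many elements whose flag sets cover $\tfrac12\mathbf{Z}\setminus\mathbf{Z}$ (Proposition~\ref{SpecialElements1}). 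The remaining, and most delicate, step is to isolate a single atom: one must produce from an element of $\mathcal{Z}$ with atoms $h,h_1,\dots,h_n$ a further element in the normal closure whose atom at the flags of $h$ is nontrivial while the other atoms have been killed. This is the ``compatibility'' machinery of Lemma~\ref{confluence} through Proposition~\ref{SpecialElements2}, which exploits transition points and slope data of piecewise-linear maps to separate $h$ from the $h_i$. Your proposal skips this separation problem entirely by assuming a single-interval element exists from the start, which is exactly what cannot be done.
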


\begin{proof}[Proof of Theorem \ref{main}]
$G_{\rho}$ is a finitely generated subgroup of $\textup{Homeo}^+(\mathbf{R})$ by construction.
After combining Propositions \ref{specialgroups}, \ref{conjugation} and \ref{mainprop} it follows that 
given a nonidentity element $f\in G_{\rho}$:
$$\mathcal{K}'\subseteq \langle \langle f\rangle \rangle_{G_{\rho}}\qquad \mathcal{L}'\subseteq  \langle \langle f\rangle \rangle_{G_{\rho}}$$
By Proposition \ref{commutator} we know that $G_{\rho}=\langle \mathcal{K}'\cup\mathcal{L}'\rangle$ and so it follows that $\langle \langle f\rangle \rangle_{G_{\rho}}=G_{\rho}$.
Therefore $G_{\rho}$ is simple.
\end{proof}

\section{Proof of Proposition \ref{mainprop}}

Our work in this section shall be dedicated to a proof of Proposition \ref{mainprop}.
We shall prove part $(1)$ of the Proposition. The other half of the Proposition admits a similar proof.

Throughout this section we shall assume that $\rho$ is a quasi-periodic labelling.
We shall develop some structural results concerning the group action of $G_{\rho}$ and various subgroups.
The proof of Proposition \ref{mainprop} will follow from Propositions \ref{SpecialElements1} and \ref{SpecialElements2}.
The section will be devoted to formulating and proving these propositions, and then finishing with the proof of \ref{mainprop}.
Recall that $\mathbf{S}_{\rho}$ is the generating set of $G_{\rho}$ as defined in Definition \ref{SimpleGroup}.

\begin{lem}\label{ElementsProperties}
Let $f\in G_{\rho}$ be a nonidentity element such that $$f=w_1...w_k\qquad w_i\in \mathbf{S}_{\rho}\text{ for }1\leq i\leq k$$
Then the following hold:
\begin{enumerate}
\item The set of breakpoints of $f$ is discrete and the set of transition points is also discrete.
\item $f$ fixes a point in $\mathbf{R}$.
\item For each $x\in \mathbf{R}$ and each $i\leq k$, 
$$x\cdot w_1...w_i\in [x-(k+1),x+(k+1)]$$
\end{enumerate}
\end{lem}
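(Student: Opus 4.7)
The plan is to prove the three parts in the order (3), (1), (2), with (1) and (3) being essentially structural bookkeeping and (2) being the substantive step.

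For (3), I would induct on $i$. The key observation is that every generator $w\in\mathbf{S}_{\rho}$ has its support contained in a disjoint union of open intervals of length exactly $1$ (namely $(n,n+1)$ for $w=\zeta_j$ and $(n-\tfrac12,n+\tfrac12)$ for $w=\chi_j$), and $w$ preserves each such interval setwise. Thus $|x\cdot w - x|<1$ for every $x\in\mathbf{R}$ and every $w\in\mathbf{S}_\rho$, and iterating yields $|x\cdot w_1\cdots w_i - x|<i\le k$, comfortably inside $[x-(k+1),x+(k+1)]$.

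For (1), I would induct on $k$. In the base case, each generator is piecewise linear and, by construction, its breakpoints inside each tile come from the finite breakpoint set of $\nu_1,\nu_2,\nu_3$ (which are dyadic rationals in $(0,1)$ bounded away from the endpoints by some positive $\delta$); hence each generator has only finitely many breakpoints on any compact interval. For the inductive step with $f=(w_1\cdots w_{k-1})\cdot w_k$, the breakpoints of $f$ on a compact interval $[N,M]$ are the union of (i) the breakpoints of $w_1\cdots w_{k-1}$ on $[N,M]$, finite by induction, and (ii) the $(w_1\cdots w_{k-1})^{-1}$-preimages of breakpoints of $w_k$ on the compact image $(w_1\cdots w_{k-1})([N,M])$, finite by the base case; hence $f$ has a discrete breakpoint set. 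For transition points, between consecutive breakpoints $f$ is linear with slope a power of $2$: on each such piece, either $f$ equals the identity, or $f$ is non-identity with at most one interior fixed point (none if the slope equals $1$, exactly one otherwise). Therefore $\mathrm{Supp}(f)$ has finitely many connected components in each compact interval, so its boundary -- the set of transition points -- is discrete.

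For (2), suppose for contradiction that $f$ has no fixed point. Since $f$ is orientation-preserving, either $f(x)>x$ for all $x\in\mathbf{R}$ or $f(x)<x$ for all $x$; WLOG assume the first. By (3), the continuous function $f(x)-x$ is then strictly positive and bounded by $k+1$. The plan is to rule out this alternative using the quasi-periodicity of $\rho$: the non-quasi-periodic labelling $\tau$ of Remark \ref{OtherLabellings} produces $G_\tau$ containing the integer translation $x\mapsto x+1$, which has no fixed point, so quasi-periodicity is essential here. The approach would be to argue that a fixed-point-free element, being conjugate in $\mathrm{Homeo}^+(\mathbf{R})$ to a translation (H\"older's theorem), would produce a $\mathbf{Z}$-equivariant structure forcing a periodic pattern on $\rho$, contradicting quasi-periodicity; alternatively, one can appeal to perfectness of $G_\rho$ from Proposition \ref{commutator} to rule out translation-like behaviour.

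I expect (2) to be the main obstacle: (1) and (3) are tile-by-tile bookkeeping arguments that work for any labelling, whereas (2) is where the quasi-periodicity hypothesis genuinely enters and must be combined with the bounded-displacement bound from (3).
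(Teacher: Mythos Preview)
Your treatment of parts (1) and (3) is fine and matches the paper's argument. The gap is in part (2). Both of your proposed routes fail. The perfectness route is decisively wrong: Proposition~\ref{commutator} holds for \emph{every} labelling, so $G_\tau$ (Remark~\ref{OtherLabellings}) is perfect yet contains the integer translations, which are fixed-point-free. Hence perfectness cannot imply that every element has a fixed point. The H\"older/periodicity route is too vague to be an argument: a single fixed-point-free homeomorphism is always conjugate to a translation, but the conjugating homeomorphism need not respect the tiling by unit intervals, so there is no mechanism by which this forces $\rho$ to be periodic, nor is it clear that ``periodic'' would contradict quasi-periodicity in the required sense.

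The paper's actual argument for (2) uses condition (2) in the definition of quasi-periodicity directly, and you should too. Take $x$ with $x\cdot f>x$ and set $y=\lfloor x\rfloor$. By your part (3), the restriction of $f$ to $[y,y+1]$ is determined by $\rho$ on the block $B_1=\{y-(k+1),\dots,y+(k+1)\}$. Quasi-periodicity guarantees a block $B_2$ centred at some $z\in\mathbf{Z}$ with $W_\rho(B_2)=W_\rho^{-1}(B_1)$. The point is that passing from $B_1$ to its formal inverse corresponds to conjugating each generator by the orientation-reversing isometry sending the $B_1$-window to the $B_2$-window; consequently the action of $f$ near $z$ is the reflection of its action near $y$, and one finds $x_1$ with $x_1\cdot f<x_1$. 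The intermediate value theorem then gives a fixed point. This is the missing idea: part (2) of quasi-periodicity is exactly what produces a point moved in the opposite direction.
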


\begin{proof}
By construction, $G_{\rho}$ is a subgroup of the group of piecewise linear homeomorphisms of the real line each of whose elements has a countable, discrete set of breakpoints.
Note that every element of $G_{\rho}$ will have a discrete set of transition points since each non trivial affine map has at most one fixed point in the real line,
and the restriction of the action of an element of $G_{\rho}$ on a compact interval is piecewise linear with finitely many breakpoints.
Recall from the definition of the generating set that for each generator $w\in \mathbf{S}_{\rho}$
and each $x\in \mathbf{R}$, it holds that $|x\cdot w-x|<1$.
In particular, if $\tilde{w}_i$ is the partial map that is the restriction of $w_i$ on $[x-(k+1),x+(k+1)]$, then for each $i\leq k$ $$x\cdot \tilde{w}_1...\tilde{w}_i$$ is defined and $$x\cdot w_1...w_i=x\cdot \tilde{w}_1...\tilde{w_i}$$
This proves the third statement.

Now we shall prove the second statement.
Assume that there is an $x\in \mathbf{R}$ such that $x\cdot f>x$.
(A symmetric argument works if we start with an $x\in \mathbf{R}$ such that $x\cdot f<x$.)
Consider the block $$B_1=\{y-(k+1),...,y-\frac{1}{2},y,y+\frac{1}{2},...,y+(k+1)\}\qquad y=\left \lfloor{x}\right \rfloor$$
Note that by the observation above, the restriction of the action of $f$ on $[y,y+1]$ only depends on the restriction of $\rho$ on this block.
Since $\rho$ is quasi-periodic, there is another block $$B_2=\{z-(k+1),...,z-\frac{1}{2},z,z+\frac{1}{2},...,z+(k+1)\}\qquad \text{ for some }z\in \mathbf{Z}$$
such that $W_{\rho}^{-1}(B_1)=W_{\rho}(B_2)$.
It follows that there is an $$x_1\in (z-(k+1),z+(k+1))$$ such that $x_1\cdot f<x_1$.
Since $f$ is a homeomorphism, by the intermediate value theorem it follows that $f$ fixes a point in any compact interval containing the points $x,x_1$.
\end{proof}

\begin{remark}
Note that in the proof of the second part of Lemma \ref{ElementsProperties}, we use part $(2)$ of the definition of a quasi-periodic labelling.
This part of the lemma fails for the non-simple group discussed in Remark \ref{OtherLabellings} since it contains integer translations as elements.
\end{remark}

\begin{lem}\label{minimal0}
The action of $G_{\rho}$ on $\mathbf{R}$ is minimal.
\end{lem}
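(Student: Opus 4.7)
The plan is to verify minimality directly: for every $x\in\mathbf{R}$ and every nonempty open interval $U\subseteq\mathbf{R}$, I will exhibit $g\in G_\rho$ with $x\cdot g\in U$. The starting point is the classical fact that $F'$ acts minimally on $(0,1)$, combined with the equality $H'=F'$ from Lemma \ref{3gen}. Transporting this across the isomorphisms $\lambda$ and $\pi$ of Definition \ref{SimpleGroup}, one obtains that $\mathcal{K}$ acts minimally on each integer interval $(n,n+1)$, and that $\mathcal{L}$ acts minimally on each half-integer interval $(n-\tfrac{1}{2},n+\tfrac{1}{2})$.

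The second ingredient is that these two families of intervals interlock: every integer interval overlaps two half-integer intervals, and vice versa. Starting from a generic $x\in(n,n+1)$, I first use $\mathcal{K}$-minimality to push $x$ to some $y\in(n+\tfrac{1}{2},n+1)$; since $y$ lies in the half-integer interval $(n+\tfrac{1}{2},n+\tfrac{3}{2})$, I then invoke $\mathcal{L}$-minimality to push $y$ past the integer $n+1$ into $(n+1,n+\tfrac{3}{2})\subset(n+1,n+2)$. Iterating this maneuver in both directions, the orbit meets every integer interval, and a final application of $\mathcal{K}$-minimality inside the target interval places the image into $U$.

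It remains to handle the exceptional case where $x$ itself lies in $\tfrac{1}{2}\mathbf{Z}$. Suppose $x=n\in\mathbf{Z}$. Under the identification $[n-\tfrac{1}{2},n+\tfrac{1}{2}]\cong[0,1]$, the point $x$ corresponds to $\tfrac{1}{2}$, and the action of $\mathcal{L}$ on this interval is conjugate to that of $H$ on $[0,1]$. Since $F'\subseteq H$ contains elements that move $\tfrac{1}{2}$ and since the $F'$-orbit of $\tfrac{1}{2}$ is dense in $(0,1)$ by minimality of $F'$, the $\mathcal{L}$-orbit of $n$ meets the non-integer points of $(n-\tfrac{1}{2},n+\tfrac{1}{2})$, reducing this case to the previous one. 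The half-integer case is symmetric, using $\mathcal{K}$ in place of $\mathcal{L}$.

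I do not expect any serious obstacle here: the argument is entirely geometric and uses only minimality of $F'$ on $(0,1)$ together with the overlapping-cover structure of $\mathbf{R}$ by the two families of intervals. Notably, quasi-periodicity of $\rho$ is not invoked, so minimality holds for every labelling $\rho$, not just quasi-periodic ones.
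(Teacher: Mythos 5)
Your argument is correct and is essentially the paper's proof: the paper likewise deduces minimality of $\mathcal{K}$ on each $(n,n+1)$ and of $\mathcal{L}$ on each $(n-\tfrac{1}{2},n+\tfrac{1}{2})$ from minimality of $H$ on $(0,1)$, and then asserts that the overlapping intervals yield minimality of $G_{\rho}$. You have merely spelled out the ``easy to see'' step (pushing orbits across the interlocking intervals and handling points of $\tfrac{1}{2}\mathbf{Z}$), which is fine; your remark that quasi-periodicity is not needed here is also consistent with the paper.
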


\begin{proof}
The action of $H$ on $(0,1)$ is minimal.
So the restrictions $$\mathcal{K}\restriction (n,n+1)\qquad \mathcal{L}\restriction (n-\frac{1}{2},n+\frac{1}{2})$$
are minimal for each $n\in \mathbf{Z}$.
It is easy to see that this implies the minimality of the group $G_{\rho}$ which is generated by $\mathcal{K}$ and $\mathcal{L}$.
\end{proof}

\begin{lem}\label{minimal}
For each pair of elements $m_1,m_2\in \mathbf{Z}$ and a closed interval $I\subset (m_1,m_1+1)$,  
there is a word $w_1...w_k$ in the generators $\mathbf{S}_{\rho}$ such that $$I\cdot w_1...w_k\subset (m_2,m_2+1)$$
and $$I\cdot w_1...w_i\subset [m_3,m_4]$$ for each $1\leq i\leq k$, where $m_3=inf\{m_1,m_2\}, m_4=sup\{m_1+1,m_2+1\}$.
\end{lem}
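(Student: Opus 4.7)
The plan is to induct on $|m_2 - m_1|$, with the main content being the single unit-step case where $m_2 = m_1 + 1$. The general case then follows by concatenating unit-step words, with the hull condition preserved because each unit-step hull lies in $[m_3, m_4]$. The case $m_1 = m_2$ is trivial, and the case $m_2 < m_1$ is handled by a symmetric leftward argument.

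For the unit step $(m_1, m_1+1) \to (m_1+1, m_1+2)$, I build the word in two blocks by exploiting that the generators split into two families with complementary invariance properties: each $\zeta_i$ preserves every $[n, n+1]$, and each $\chi_i$ preserves every $[n - \tfrac12, n + \tfrac12]$. First, I find a word $u$ in $\zeta_1, \zeta_2, \zeta_3$ (and their inverses) shifting $I$ into a closed subinterval of $(m_1 + \tfrac12, m_1 + 1)$; because the $\zeta_i$ stabilize $[m_1, m_1+1]$, every intermediate image lies in $[m_1, m_1+1] \subset [m_3, m_4]$. Second, I find a word $v$ in $\chi_1, \chi_2, \chi_3$ (and their inverses) pushing $I \cdot u$ across the integer $m_1+1$ into $(m_1+1, m_1+\tfrac32)$; since $I \cdot u \subset [m_1+\tfrac12, m_1+\tfrac32]$ and the $\chi_i$ stabilize this interval, all intermediate images lie in $[m_1+\tfrac12, m_1+\tfrac32] \subset [m_3, m_4]$. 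Concatenating yields the desired word $uv$, and the image $I \cdot uv$ is again a closed subinterval of $(m_1+1, m_1+2)$, ready for the next inductive step.

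The existence of both $u$ and $v$ rests on the following auxiliary fact about $H$: for any closed $J \subset (0, 1)$ and any nonempty open $U \subset (0, 1)$, there is $h \in H$ with $h(J) \subset U$. Granting this, $u$ (respectively $v$) is obtained by pulling $h$ back through the isomorphism $\lambda$ (respectively $\pi$) restricted to $[m_1, m_1+1]$ (respectively $[m_1+\tfrac12, m_1+\tfrac32]$), post-composing with $\iota$ if the relevant label is $b^{-1}$ or $a^{-1}$; the resulting element is literally a word in $\mathbf{S}_\rho^{\pm 1}$ because $\lambda(\nu_i) = \zeta_i$ and $\pi(\nu_i) = \chi_i$. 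I would prove the auxiliary fact by sandwiching $J$ between two disjoint standard dyadic intervals $D_1 < D_2$ in $(0, 1)$, picking standard dyadic $D_1' < D_2'$ whose inner gap $[\sup D_1', \inf D_2']$ lies in $U$, and invoking Lemma \ref{TransitiveStandard2} to obtain $f \in F' \subset H$ sending $D_i \mapsto D_i'$ linearly, whence $f(J) \subset U$. The main obstacle is precisely this auxiliary fact, because Lemmas \ref{TransitiveStandard} and \ref{TransitiveStandard2} are formulated for standard dyadic intervals only while $J$ is arbitrary; the two-interval sandwich trick is the essential bridge, and once it is in hand the remainder is careful bookkeeping of the two families of invariant intervals that enables the controlled integer-crossing.
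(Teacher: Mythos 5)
Your proposal is correct, and it reaches the conclusion by a somewhat different decomposition than the paper. The paper's proof transports the single point $x=m_1+\tfrac12$ into $(m_2,m_2+1)$ by invoking minimality of the restrictions of $\mathcal{K}$ and $\mathcal{L}$ to the tiles $(t,t+1)$ and $(t-\tfrac12,t+\tfrac12)$, upgrades this to a small interval $J\ni x$ by continuity, and only then compresses $I$ into $J$ by a single application of the fact that $F'$ maps any closed subinterval of $(0,1)$ into any other; you instead induct on $|m_2-m_1|$ and move the whole interval at every stage, alternating a $\zeta$-word (compressing into the appropriate half of the current tile $[m,m+1]$) with a $\chi$-word (crossing the integer inside $[m+\tfrac12,m+\tfrac32]$, or $[m-\tfrac12,m+\tfrac12]$ going left), with the hull condition falling out of the fact that each $\zeta_i$ preserves every $[n,n+1]$ and each $\chi_i$ preserves every $[n-\tfrac12,n+\tfrac12]$. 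You also prove, rather than merely recall, the compression fact, via the two-interval sandwich and Lemma \ref{TransitiveStandard2}, which is a legitimate derivation. What each approach buys: the paper's argument is shorter because it leans on the already-available minimality of the tile actions (as in Lemma \ref{minimal0}) and needs the compression fact only once; yours is more self-contained and makes the bound $[m_3,m_4]$ on intermediate images completely explicit, avoiding both the minimality input and the continuity/compactness step needed to pass from a point to a neighborhood.
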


\begin{proof}
We assume that $m_1<m_2$. The case $m_2<m_1$ admits a similar proof.
Let $x=m_1+\frac{1}{2}$.
Recall that the restrictions $$\mathcal{K}\restriction (t,t+1)\qquad \mathcal{L}\restriction (t-\frac{1}{2},t+\frac{1}{2})$$
are minimal for each $t\in \mathbf{Z}$.
Using this fact, we find a word $v_1...v_n$ in $\mathbf{S}_{\rho}$ such that $$x\cdot v_1...v_n\subset (m_2,m_2+1)$$
and $$x\cdot v_1...v_i\subset [m_3,m_4]$$ for each $1\leq i\leq n$.
Since our maps are continuous, we find an interval $J\subset (m_1,m_1+1)$ such that $x\in J$ and the above statement holds if one replaces $x$ by $J$.

Recall that for any pair of closed intervals $I_1,I_2\subset (0,1)$ there is an element $f\in H'=F'$ such that $I_1\cdot f\subset I_2$.
In particular, this holds for the restriction of the action of $\mathcal{K}$ on $(m_1,m_1+1)$.
So we find a word $u_1...u_l$ in $\{\zeta_1,\zeta_2,\zeta_3\}$ such that $I\cdot u_1...u_l\subset J$. 
Then the required word is $w_1...w_k=(u_1...u_l)(v_1...v_n)$.
\end{proof}

We fix the natural map $$\Phi:\mathbf{R}\to [0,1)\qquad \Phi(x):=x-\left \lfloor{x}\right \rfloor$$
The following are elementary corollaries of the third part of Lemma \ref{ElementsProperties}, and we leave the proof of the first (which follows from the definitions) to the reader.

\begin{cor}\label{Phiimages}
Let $f\in G_{\rho}$.
There is an $m\in \mathbf{N}$ such that
for any $x_1,x_2 \in \mathbf{R}$ so that $x_1-x_2\in \mathbf{Z}$, the following holds.
If the restriction of $\rho$ on the blocks $$\{y_1-m,...,y_1-\frac{1}{2},y_1,y_1+\frac{1}{2},...,y_1+m\}\qquad y_1=\left \lfloor{x_1}\right \rfloor$$
and $$\{y_2-m,...,y_2-\frac{1}{2},y_2,y_2+\frac{1}{2},...,y_2+m\}\qquad y_2=\left \lfloor{x_2}\right \rfloor$$
is equal, then $\Phi(x_1\cdot f)=\Phi(x_2\cdot f)$.
\end{cor}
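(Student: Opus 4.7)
The plan is to prove the statement by a direct induction on the length of a fixed expression for $f$ in the generators. I would write $f = w_1 \cdots w_k$ with each $w_i \in \mathbf{S}_{\rho}$ and take $m := k + 2$. Suppose $x_1, x_2 \in \mathbf{R}$ satisfy $n := x_2 - x_1 \in \mathbf{Z}$ and that the restrictions of $\rho$ to the two blocks of radius $m$ around $y_j = \lfloor x_j \rfloor$ coincide; necessarily $y_2 = y_1 + n$, so the label-matching condition is equivalent to $\rho(y_1 + s) = \rho(y_2 + s)$ for every half-integer $s$ with $|s| \leq m$.

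The inductive claim I would establish is that $x_2 \cdot w_1 \cdots w_i = (x_1 \cdot w_1 \cdots w_i) + n$ for every $0 \leq i \leq k$. The base case $i = 0$ is immediate. For the inductive step, set $p_1 := x_1 \cdot w_1 \cdots w_i$ and, by the inductive hypothesis, $x_2 \cdot w_1 \cdots w_i = p_1 + n$. By Definition \ref{SimpleGroup}, the restriction of $w_{i+1}$ to the unit-length interval around $p_1$ on which it acts non-trivially is determined by the value of $\rho$ at a single half-integer (namely $\lfloor p_1 \rfloor + \tfrac{1}{2}$ if $w_{i+1}$ is a $\zeta_j$, or the integer nearest $p_1$ if $w_{i+1}$ is a $\chi_j$). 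Part $(3)$ of Lemma \ref{ElementsProperties} gives $|p_1 - x_1| \leq k + 1$, and a short estimate shows that this half-integer lies within distance $m$ of $y_1$. The corresponding half-integer determining the action near $p_1 + n$ is shifted by $n$ and, by the matching hypothesis, receives the same $\rho$-value. Consequently $w_{i+1}$ restricted to the relevant interval around $p_1 + n$ is literally the translate by $n$ of its restriction around $p_1$, which yields $(p_1 + n) \cdot w_{i+1} = (p_1 \cdot w_{i+1}) + n$ and closes the induction.

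Taking $i = k$ gives $x_2 \cdot f - x_1 \cdot f = n \in \mathbf{Z}$, whence $\Phi(x_1 \cdot f) = \Phi(x_2 \cdot f)$, as required. The only delicate point, and the place one must be careful, is the calibration of $m$: one has to check that every half-integer needed to describe the action of any $w_{i+1}$ along the partial orbit of $x_1$ really does lie in the block of radius $m$ around $y_1$. This reduces to a routine estimate combining the travel bound of Lemma \ref{ElementsProperties}(3) with the fact that each generator's local rule depends only on a half-integer within distance $\tfrac{1}{2}$ of the acted-upon point, so I expect it to be careful bookkeeping rather than a genuine obstacle.
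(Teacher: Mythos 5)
Your argument is correct and is exactly the argument the paper has in mind: the paper leaves this corollary to the reader as an ``elementary'' consequence of Lemma \ref{ElementsProperties}(3) and the definition of the generators, and your induction along the word $w_1\cdots w_k$, showing the partial orbits of $x_1$ and $x_2$ remain integer translates of each other as long as the labels agree on a block of radius $m$ slightly larger than $k+1$, is precisely that intended bookkeeping. Your calibration $m=k+2$ checks out, since each generator's local rule depends on a half-integer within $\tfrac12$ of the point acted upon and the partial orbit stays within $k+1$ of $x_1$.
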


\begin{cor}\label{FiniteImagePhi}
Let $f\in G_{\rho}$. Then the set $$\{\Phi(x\cdot f)\mid x\in \mathbf{Z}\}$$
is finite.
\end{cor}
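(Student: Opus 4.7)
The plan is to deduce this immediately from Corollary \ref{Phiimages} by a finite pigeonhole argument on the possible "local types" of integers with respect to $\rho$.

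First I would apply Corollary \ref{Phiimages} to the given element $f \in G_\rho$ to obtain an integer $m \in \mathbf{N}$ with the stated property. Next, for each integer $x \in \mathbf{Z}$ I would define the \emph{local type} of $x$ to be the restriction of the labelling $\rho$ to the block
\[
B_x = \bigl\{x-m, x-m+\tfrac{1}{2}, \ldots, x+m-\tfrac{1}{2}, x+m\bigr\} \in \mathbf{B}.
\]
Since each $B_x$ has exactly $4m+1$ elements and $\rho$ takes values in the $4$-element alphabet $\{a, b, a^{-1}, b^{-1}\}$ (with the parity restrictions of the definition of a labelling further cutting this down), there are only finitely many possible local types — at most $2^{4m+1}$, independent of $x$.

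Now for any $x_1, x_2 \in \mathbf{Z}$ the difference $x_1 - x_2$ lies in $\mathbf{Z}$ and $\lfloor x_i \rfloor = x_i$, so if $x_1$ and $x_2$ have the same local type, Corollary \ref{Phiimages} applies and yields $\Phi(x_1 \cdot f) = \Phi(x_2 \cdot f)$. Thus the value of $\Phi(x \cdot f)$ depends only on the local type of $x$, and so the set $\{\Phi(x \cdot f) \mid x \in \mathbf{Z}\}$ has cardinality at most the number of local types, which is finite. There is no real obstacle here; the work is entirely absorbed into Corollary \ref{Phiimages}, and what remains is just the observation that the ambient combinatorial data (block restrictions of a labelling of bounded length in a finite alphabet) is finite.
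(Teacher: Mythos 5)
Your proposal is correct and is essentially identical to the paper's own proof: both apply Corollary \ref{Phiimages} to reduce $\Phi(x\cdot f)$ to the restriction of $\rho$ on the block of radius $m$ around $x$, and then note there are only finitely many such restrictions. No further comment is needed.
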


\begin{proof}
From the previous Corollary it follows that there is an $m\in \mathbf{Z}$ such that for any $y\in \mathbf{Z}$, $\Phi(y\cdot f)$ is determined by the restriction of the labelling $\rho$ to the block $$\{y-m,...,y-\frac{1}{2},y,y+\frac{1}{2},...,y+m\}$$
Since there are finitely many words in $\{a,b,a^{-1},b^{-1}\}$ of length $4m+1$, it follows that the set $$\{\Phi(x\cdot f) \mid x\in \mathbf{Z}\}$$
is finite.
\end{proof}

\begin{defn}\label{Z}
We denote by $\mathcal{Z}<G_{\rho}$ as the subgroup of elements of $G_{\rho}$ which fix $\mathbf{Z}$ pointwise and for which no point in $\mathbf{Z}$ is a transition point.
This subgroup shall play a useful role in the rest of the section.
Note that $\mathcal{Z}$ contains $\mathcal{K}'$ as a subgroup.
\end{defn}

\begin{lem}\label{SpecialElements}
Let $f\in G_{\rho}$ be a nonidentity element.
Then $\langle \langle f\rangle \rangle_{G_{\rho}}\cap \mathcal{Z}\neq \{id\}$.
\end{lem}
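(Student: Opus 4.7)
The strategy is to produce a nontrivial element of $\langle\langle f\rangle\rangle_{G_{\rho}} \cap \mathcal{Z}$ as a commutator $[f,g] := f^{-1}g^{-1}fg$ with $g \in \mathcal{K}' \subseteq \mathcal{Z}$ chosen carefully. The conditions that need to be arranged are: (i) $[f,g]$ should still fix $\mathbf{Z}$ pointwise and have no integer as a transition point; (ii) $[f,g]$ should be nontrivial. Since $\mathcal{K}'$ acts on every interval $[n,n+1]$ simultaneously, the key point is to choose $g$ whose closed support is disjoint from $\mathbf{Z} \cdot f^{-1}$ (this will yield (i)) yet moves a prescribed point $x_0$ whose $f^{-1}$-preimage is fixed by $g$ (yielding (ii)).

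To make the selections precise, I would apply Corollary~\ref{FiniteImagePhi} to both $f$ and $f^{-1}$ to obtain a finite set $R \subseteq [0,1)$ containing $\Phi(n \cdot f)$ and $\Phi(n \cdot f^{-1})$ for every $n \in \mathbf{Z}$, and then select $x_0 \in \mathbf{R} \setminus \mathbf{Z}$ with $x_0 \cdot f \neq x_0$, with residues $y_0 := \Phi(x_0) \notin R \cup (1-R)$ and $y_1 := \Phi(x_0 \cdot f^{-1}) \notin \{y_0,\, 1-y_0\}$. The last condition is the subtle one: $y_1 = y_0$ would require $f^{-1}$ to act as a nonzero integer translation in a neighbourhood of $x_0$, and $y_1 = 1-y_0$ would demand slope $-1$ (ruled out by orientation preservation). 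A nonzero integer translation on an entire component of the open set $\{x : x \cdot f \neq x\}$ clashes with continuity of $f - \mathrm{id}$ at the boundary (where $f$ is the identity), and the fixed-point guarantee of Lemma~\ref{ElementsProperties}(2) rules out $f$ being a global integer translation; combined with piecewise linearity of $f$, this leaves an open dense family of candidates for $x_0$.

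With $x_0$ fixed, choose a small closed interval $[a,b] \subset (0,1)$ containing $y_0$ in its interior and disjoint from the finite set $R \cup (1-R) \cup \{y_1,\, 1-y_1\}$, then choose $h \in H'$ with closed support in $(a,b)$ satisfying $y_0 \cdot h \neq y_0$ (available by the minimality of the $H'$-action on $(0,1)$, passing if necessary to a dyadic subinterval of $[a,b]$ around $y_0$). Set $g := \lambda(h) \in \mathcal{K}'$. The closed support $\overline{\mathrm{supp}(g)}$ is a union over $n \in \mathbf{Z}$ of intervals of the form $[n+a, n+b]$ or $[n+1-b, n+1-a]$, and so is disjoint from $\mathbf{Z}$ as well as from $\mathbf{Z} \cdot f^{-1}$.

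For the verification: for each $n \in \mathbf{Z}$, $g$ fixes both $n$ and $n \cdot f^{-1}$, yielding $n \cdot [f,g] = (n \cdot f^{-1}) \cdot g^{-1} \cdot f \cdot g = n$; meanwhile $\overline{\mathrm{supp}([f,g])} \subseteq \overline{\mathrm{supp}(g)} \cup \overline{\mathrm{supp}(g)} \cdot f$ is disjoint from $\mathbf{Z}$, so no integer is a transition point, proving $[f,g] \in \mathcal{Z}$. For nontriviality, set $x_{-1} := x_0 \cdot f^{-1}$; by construction $x_0 \in \mathrm{supp}(g)$ with $x_0 \cdot g \neq x_0$, while $x_{-1} \notin \mathrm{supp}(g)$. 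A direct right-action computation then gives $x_0 \cdot [f,g] = x_{-1} \cdot g^{-1} \cdot f \cdot g = x_{-1} \cdot f \cdot g = x_0 \cdot g \neq x_0$, so $[f,g] \neq \mathrm{id}$. The main obstacle is precisely the genericity argument for $x_0$ --- balancing ``$x_0 \in \mathrm{supp}(g)$'' against ``$x_0 \cdot f^{-1} \notin \mathrm{supp}(g)$'' --- which relies decisively on the fixed-point consequence of Lemma~\ref{ElementsProperties}(2).
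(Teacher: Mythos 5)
Your overall plan---commuting $f$ against an element $g=\lambda(h)$ whose support avoids $\mathbf{Z}\cup\mathbf{Z}\cdot f^{-1}$ (via Corollary~\ref{FiniteImagePhi}) and certifying nontriviality of $[f,g]$ at a well-chosen point---is essentially the paper's, and your verification that $[f,g]\in\mathcal{Z}$ is correct. But one step fails as written: the claim ``by construction $x_0\in\mathrm{supp}(g)$ with $x_0\cdot g\neq x_0$.'' The isomorphism $\lambda$ copies $h$ onto $[n,n+1]$ only when $\rho(n+\frac12)=b$; when $\rho(n+\frac12)=b^{-1}$ it copies $\iota\circ h\circ\iota$, so on that interval $\mathrm{supp}(g)\subseteq n+[1-b,1-a]$---you even record this in your description of $\overline{\mathrm{supp}(g)}$. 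If the interval containing $x_0$ carries the label $b^{-1}$ (which you cannot rule out, since $\mathrm{supp}(f)$ may meet only such intervals), then $g$ fixes $x_0$ and the computation $x_0\cdot[f,g]=x_0\cdot g\neq x_0$ collapses. The repair is exactly the paper's device: choose $h$ \emph{symmetric} ($\iota\circ h\circ\iota=h$), supported in $\bigl((a,b)\cup (a,b)\cdot\iota\bigr)$ and moving $y_0$. Your avoidance conditions are already stated symmetrically ($R\cup(1-R)$ and $\{y_1,1-y_1\}$), so the rest of your verification goes through verbatim, and $g$ then moves $x_0$ whatever the label is.

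A secondary imprecision: your genericity argument for $x_0$ is compressed. The equality $y_1=y_0$ at a single point only says $x_0\cdot f^{-1}-x_0\in\mathbf{Z}\setminus\{0\}$, not that $f^{-1}$ translates a neighbourhood of $x_0$, and the good set need not be dense in $\mathrm{supp}(f)$ (the map $f$ may genuinely be an integer translation on part of its support). What you need, and what your ingredients do give, is nonemptiness: if every point of $\mathrm{supp}(f)$ outside a countable set were bad, then by piecewise linearity every linearity piece inside a component $U$ of $\mathrm{supp}(f)$ would be a nonzero integer translation, continuity forces the translation constant to be the same on all of $U$, and then either $U$ has a finite endpoint (contradicting continuity at that fixed boundary point) or $U=\mathbf{R}$ (contradicting the fixed point from Lemma~\ref{ElementsProperties}). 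Spelled out this way, the existence of $x_0$ is fine. For comparison, the paper sidesteps this bookkeeping: it takes a component $J$ of $\mathrm{supp}(f)$ with a fixed finite endpoint, picks a symmetric $g$ supported in the residues of $J$ minus the residues of $\mathbf{Z}\cdot f$, and deduces $[f,g]\neq\mathrm{id}$ from discreteness of transition points (the $f$-orbit of a transition point of $g$ inside $J$ would otherwise accumulate), which replaces your pointwise computation and its genericity analysis.
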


\begin{proof}
{\bf Claim}: There is an element $h\in G_{\rho}$ such that $$Supp(h)\cap (\mathbf{Z}\cup \{x\cdot f\mid x\in \mathbf{Z}\})=\emptyset$$
Moreover, the element $h$ can be chosen so that it does not commute with $f$.

{\bf Proof of claim}: From the second part of Lemma \ref{ElementsProperties} we know that $f$ fixes a point in $\mathbf{R}$.
Since $f$ is nonidentity, we can find an open interval $J$ which is a component of the support of $f$ with a boundary point $z\in \mathbf{R}$ which is fixed by $f$.
We define $B=\{\Phi(x\cdot f)\mid x\in \mathbf{Z}\}$ and
$$X=\Phi(J)\cup (\Phi(J)\cdot \iota) \qquad Y=B\cup (B\cdot \iota)$$
(Here recall that we denote by $\iota$ the unique orientation reversing isometry of $[0,1]$.)
Note that $X\setminus Y$ is a symmetric subset of $[0,1]$ by definition and has nonempty interior since $\Phi(J)$ has nonempty interior and $B$ is a finite set (thanks to Corollary \ref{FiniteImagePhi}).
Let $g\in H'$ be a nonidentity symmetric element such that $\overline{Supp}(g)\subset X\setminus Y$.
Let $h=\lambda(g)$.

First, we claim that since $\partial Supp(h)\cap J\neq \emptyset$ it follows that $h,f$ do not commute.
Let $x$ be a boundary point of $Supp(h)$ that lies in $J$.
If $f,h$ were to commute, then the elements of the set $\{f^n(x)\mid n\in \mathbf{Z}\}$ will all be transition points of $h$,
with an accumulation point in $\mathbf{R}$ which is another transition point of $h$.
This contradicts the first part of Lemma \ref{ElementsProperties}.

Next, observe that by design $\overline{Supp(h)}\cap \{x\cdot f\mid x\in \mathbf{Z}\}=\emptyset$ and $h\in \mathcal{Z}$.  
This implies that $$f h f^{-1} h^{-1}\in (\langle \langle f\rangle \rangle_{G_{\rho}}\cap \mathcal{Z})\setminus \{id\}$$
\end{proof}

For each $q\in \frac{1}{2}\mathbf{Z}\setminus \mathbf{Z}$, we define the map 
$$\kappa_q:\mathcal{Z}\to H\qquad \kappa_q(f)\cong_T f\restriction [q-\frac{1}{2},q+\frac{1}{2}]\text{ if } \rho(q)=b$$
$$\kappa_q:\mathcal{Z}\to H\qquad \kappa_q(f)\cong_T \iota\circ f\circ \iota\restriction [q-\frac{1}{2},q+\frac{1}{2}]\text{ if } \rho(q)=b^{-1}$$

Given an element $f\in \mathcal{Z}$, the set $$\{\kappa_q(f)\mid q\in \frac{1}{2}\mathbf{Z}\setminus \mathbf{Z}\}\setminus \{id\}$$
is called the \emph{set of atoms} of $f$.
An element $h\in H\setminus \{id\}$ is said to be an atom of $f\in \mathcal{Z}$ if $$h\in \{\kappa_q(f)\mid q\in \frac{1}{2}\mathbf{Z}\setminus \mathbf{Z}\}$$
If $h\in H$ is an atom of $f\in \mathcal{Z}$, the set $$\{q\in \frac{1}{2}\mathbf{Z}\setminus \mathbf{Z}\mid \kappa_q(f)=h\}$$
is called the \emph{set of flags} of $h$ in $f$.
Also, an element of this set shall be referred to as a flag of $h$ in $f$.
In this language, an element $f\in \mathcal{Z}$ satisfies that $f\in \mathcal{K}_X\setminus \{id\}$ if and only if $f$ has only one atom and the set of its flags in $f$ is $X$.

\begin{lem}\label{type}
For each $f\in \mathcal{Z}$, the set of atoms of $f$ is finite. 
\end{lem}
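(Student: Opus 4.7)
My plan is to show that the atom $\kappa_q(f)$ at a half-integer $q = n + \tfrac12$ depends only on the restriction of the labelling $\rho$ to a block of bounded size around $n$, and then to use the finiteness of such restrictions.

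Fix a nonidentity $f \in \mathcal{Z}$ and let $m \in \mathbf{N}$ be the integer provided by Corollary \ref{Phiimages}, enlarged if necessary so that $m \geq 1$. Suppose $n_1, n_2 \in \mathbf{Z}$ are integers for which $\rho$ restricted to the block $B_i = \{n_i - m, n_i - m + \tfrac12, \ldots, n_i + m\}$ (for $i = 1, 2$) agrees under the translation $n_1 \mapsto n_2$. For every $x_1 \in [n_1, n_1+1]$, set $x_2 = x_1 + (n_2 - n_1)$. Corollary \ref{Phiimages} gives $\Phi(x_1 \cdot f) = \Phi(x_2 \cdot f)$. Because $f \in \mathcal{Z}$ fixes the integers $n_i$ and $n_i + 1$ pointwise and is orientation-preserving, one has $x_i \cdot f \in [n_i, n_i + 1]$, and hence $\Phi(x_i \cdot f) = x_i \cdot f - n_i$. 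It follows that $f \restriction [n_1, n_1+1]$ and $f \restriction [n_2, n_2+1]$, when translated into $[0,1]$, coincide. Since $m \geq 1$ one also has $\rho(n_1 + \tfrac12) = \rho(n_2 + \tfrac12)$, and the defining formulas for $\kappa_q$ then yield $\kappa_{n_1 + \frac12}(f) = \kappa_{n_2 + \frac12}(f)$ in $H$.

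Thus the atom $\kappa_{n + \frac12}(f)$ is determined by the pattern $\rho \restriction B_n$. Since $\rho$ takes values in the four-letter set $\{a, b, a^{-1}, b^{-1}\}$ and $B_n$ has cardinality $2m+1$, only finitely many such patterns occur as $n$ varies over $\mathbf{Z}$, yielding only finitely many atoms. The lemma is essentially a direct consequence of the word-length bound in Lemma \ref{ElementsProperties}(3), packaged through Corollary \ref{Phiimages}; there is no serious obstacle, the main content being just this locality observation. Notice that, unlike in Lemma \ref{ElementsProperties}(2), quasi-periodicity of $\rho$ is \emph{not} required here, since the argument uses only the word-length bound and the finite size of the alphabet.
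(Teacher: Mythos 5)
Your proof is correct and follows essentially the same route as the paper: the paper invokes Lemma \ref{ElementsProperties}(3) directly to see that $f\restriction [n,n+1]$ is determined by the labelling on a block of bounded radius around $n$, while you reach the same locality statement by packaging it through Corollary \ref{Phiimages} (itself an immediate consequence of that lemma), and both arguments conclude by finiteness of the set of labelling patterns on a block of fixed length. Your remark that quasi-periodicity is not needed here also matches the paper.
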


\begin{proof}
Let $f=w_1...w_k$ where each $w_i\in \mathbf{S}_{\rho}$.
From the third part of Lemma \ref{ElementsProperties} it follows that the restriction $f\restriction [n,n+1]$ for any $n\in \mathbf{Z}$ is determined by the restriction of the labelling $\rho$ to the block $$\{n-(k+1),...,n-\frac{1}{2},n,n+\frac{1}{2},...,n+(k+1)\}$$
Since there are finitely many words in $\{a,b,a^{-1},b^{-1}\}$ of length $4(k+1)+1$, our conclusion follows.
\end{proof}

\begin{lem}\label{SpecialElements0}
Let $f\in \mathcal{Z}$ be a nonidentity element, and let $h\in H$ be an atom of $f$ with set of flags $X\subset \frac{1}{2}\mathbf{Z}\setminus \mathbf{Z}$.
Then there is an $m\in \mathbf{N}$, and a partition of $\frac{1}{2}\mathbf{Z}$ into blocks $\{B_i\mid i\in \mathbf{Z}\}$ such that $|B_i|=m$ and $B_i\cap X\neq \emptyset$ $\text{ for each }i\in \mathbf{Z}$.
\end{lem}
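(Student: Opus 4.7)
The plan is to show that each atom of $f$ depends only on the labelling restricted to a window of bounded size around its flag, and then to exploit quasi-periodicity of $\rho$ to conclude that $X$ must meet every sufficiently long block.

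First I would write $f = w_1 \cdots w_k$ as a word in the generators $\mathbf{S}_\rho$. By part $(3)$ of Lemma \ref{ElementsProperties}, for every $n \in \mathbf{Z}$ the restriction $f \restriction [n,n+1]$ depends only on $\rho$ restricted to the block $D_n := \{n-(k+1), n-k-\tfrac{1}{2}, \ldots, n+(k+1)\}$ of fixed size $4(k+1)+1$; this is exactly the observation that was already used in the proof of Lemma \ref{type}. Since for $q = n + \tfrac{1}{2}$ the atom $\kappa_q(f)$ is obtained from $f\restriction [n,n+1]$ together with the single letter $\rho(q) \in \{b, b^{-1}\}$, the atom $\kappa_q(f)$ is also determined by $\rho \restriction D_n$. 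Now fix any $q_0 = n_0 + \tfrac{1}{2} \in X$ and set $W := W_\rho(D_{n_0})$. Then any other $q = n + \tfrac{1}{2}$ for which $W_\rho(D_n) = W$ as formal words must satisfy $\kappa_q(f) = h$, and hence $q \in X$.

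Next I would apply clause $(1)$ of the definition of a quasi-periodic labelling to the block $D_{n_0}$: there exists $m \in \mathbf{N}$ such that for every block $Y \in \mathbf{B}$ of size at least $m$, the word $W$ occurs as a consecutive subword of $W_\rho(Y)$. Because $\rho$ sends integers to $\{a, a^{-1}\}$ and half-integers to $\{b, b^{-1}\}$, and the alternation pattern of letter types in $W$ is inherited from $D_{n_0}$, any occurrence of $W$ as a subword of $W_\rho(Y)$ is automatically type-aligned; that is, the integer letters of $W$ must fall at integer positions of $Y$ and the half-integer letters at half-integer positions. Such an occurrence therefore exhibits a sub-block of $Y$ whose labelling agrees with the labelling on $D_{n_0}$, and the half-integer in the middle of that sub-block lies in $X \cap Y$ by the first paragraph.

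To conclude, I would partition $\tfrac{1}{2}\mathbf{Z}$ into consecutive blocks $B_i := \{\tfrac{im}{2}, \tfrac{im+1}{2}, \ldots, \tfrac{im+m-1}{2}\}$ for $i \in \mathbf{Z}$, each of cardinality exactly $m$; by the previous step each $B_i$ meets $X$. I expect the mildly delicate point to be the type-alignment remark: without it one could worry that a subword occurrence of $W$ in $W_\rho(Y)$ is ``shifted by $\tfrac{1}{2}$'' and therefore does not correspond to a flag. Everything else follows from combining the locality statement in Lemma \ref{ElementsProperties}(3) with the first clause of the definition of quasi-periodicity.
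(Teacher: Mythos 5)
Your argument is correct and is essentially the paper's own proof: both use the locality consequence of Lemma \ref{ElementsProperties}(3) (as already exploited in Lemma \ref{type}) to see that the atom at a flag is determined by the labelling on a window of bounded length around it, then apply clause (1) of quasi-periodicity to that window's word to obtain $m$ such that every length-$m$ block contains a matching sub-block, and finally partition $\frac{1}{2}\mathbf{Z}$ into blocks of length $m$. Your type-alignment remark is a correct point the paper leaves implicit (an occurrence is forced to align since the window's word begins with an $a$-type letter); the only slip is the phrase ``the half-integer in the middle of that sub-block,'' which should be ``the half-integer at the position corresponding to $q_0$,'' and this does not affect the argument.
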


\begin{proof}
Fix $n\in X$.
Thanks to Lemma \ref{ElementsProperties}, we know that there is a $k\in \mathbf{N}$ such that  $f\restriction [n-\frac{1}{2},n+\frac{1}{2}]$ is determined by the restriction of the labelling $\rho$ to the block $$B=\{n-k-\frac{1}{2},n-k,...,n,n+\frac{1}{2},...,n+k+\frac{1}{2}\}$$
By the first part of the definition of a quasi-periodic labelling, we may fix $m\in \mathbf{N}$ such that the following holds. 
For any block $B'$ of length $m$, there is a block $B''\subset B'$ such that $W_{\rho}(B'')=W_{\rho}(B)$.
It follows that for each such pair $B',B''$, there is an $$l\in (\frac{1}{2}\mathbf{Z}\setminus \mathbf{Z})\cap B''\subseteq (\frac{1}{2}\mathbf{Z}\setminus \mathbf{Z})\cap B'$$ such that $$f\restriction [l-\frac{1}{2},l+\frac{1}{2}]\cong_T f\restriction [n-\frac{1}{2},n+\frac{1}{2}]$$ 
and $\rho(n)=\rho(l)$.
In particular, $l\in X$.
Therefore, our conclusion holds for any partition of $\frac{1}{2}\mathbf{Z}$ into blocks of length $m$.
\end{proof}

\begin{remark}
The above Lemma is where the first part of the definition of quasi-periodic is required.
Recall that the second part of the definition was needed in the proof of Lemma \ref{ElementsProperties}.
\end{remark}

\begin{prop}\label{SpecialElements1}
Let $f\in \mathcal{Z}$ be a nonidentity element.
Then there are elements $$f_1,...,f_n\in \mathcal{Z}\cap \langle \langle f\rangle \rangle_{G_{\rho}}$$
such that each $f_i$ has an atom $h_i\in H'$ with the associated set of flags $X_i$ satisfying $$\bigcup_{1\leq i\leq n} X_i=\frac{1}{2}\mathbf{Z}\setminus \mathbf{Z}$$
\end{prop}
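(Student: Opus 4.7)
\emph{Proof plan.} Let $f\in\mathcal Z$ be nonidentity. By Lemma \ref{type} its atom set is finite; say the atoms are $h_1,\dots,h_k\in H$, with pairwise disjoint flag sets $X_1,\dots,X_k\subseteq \frac{1}{2}\mathbf{Z}\setminus\mathbf{Z}$ (each syndetic by Lemma \ref{SpecialElements0}).

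\emph{Step 1 (push atoms into $H'$).} Since $H'$ is simple and nonabelian, it centralizes no nontrivial element of $H$, so for each $i$ one picks $g_i\in H'$ with $[h_i,g_i]\neq\id$. Set $\tilde g_i:=\lambda_{X_i}(g_i)\in\mathcal K'_{X_i}\subseteq\mathcal Z$ and $f_i':=[f,\tilde g_i]\in\mathcal Z\cap\langle\langle f\rangle\rangle_{G_\rho}$. Since $\kappa_q$ restricts to a homomorphism on $\mathcal Z$, $\kappa_q(f_i')=[\kappa_q(f),\kappa_q(\tilde g_i)]$ equals $[h_i,g_i]\in H'\setminus\{\id\}$ for $q\in X_i$ and is $\id$ otherwise (the $X_j$ being pairwise disjoint by definition of atom). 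Thus $f_i'$ has a single atom, lying in $H'$, with flag set exactly $X_i$.

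\emph{Step 2 (extend the cover).} If $\bigcup_i X_i=\frac{1}{2}\mathbf{Z}\setminus\mathbf{Z}$ the family $\{f_i'\}$ already works. Otherwise, given $q$ in the complement, pick a nearby $q_0\in X_1$, which exists by syndeticity of $X_1$. By Lemma \ref{minimal}, composed with an adjustment inside $\mathcal K$ supplied by Lemma \ref{TransitiveStandard2}, there is $w\in G_\rho$ mapping a standard dyadic $J\subset(q-\frac{1}{2},q+\frac{1}{2})$ onto a standard dyadic interval in $(q_0-\frac{1}{2},q_0+\frac{1}{2})$ that contains the closure of the support of the chosen atom of $f_1'$ at $q_0$. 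The conjugate $w^{-1}f_1'w\in\langle\langle f\rangle\rangle_{G_\rho}$ acts nontrivially on $J$; although it need not lie in $\mathcal Z$, one reabsorbs it by a commutator with a carefully chosen $k\in\mathcal K'_{\{q\}}\subseteq\mathcal Z$ whose support lies inside $J$ and is disjoint from $w(\mathbf Z)$, producing $f_q:=[w^{-1}f_1'w,k]\in\mathcal Z\cap\langle\langle f\rangle\rangle_{G_\rho}$ carrying a nontrivial $H'$-atom at $q$. Finitely many $f_q$ suffice: by Corollary \ref{Phiimages} and quasi-periodicity of $\rho$, only finitely many local $\rho$-patterns arise around points of the complement, and the construction of $f_q$ depends only on the local pattern at $q$.

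\emph{Main obstacle.} The technical heart is Step 2. Any $\mathcal Z$-preserving conjugation of $f_1'$ (by $\mathcal K$, or by $\mathcal L'\cap\mathcal Z$) leaves flag sets invariant, so extending the cover forces one to conjugate by words $w\in G_\rho$ that fail to fix $\mathbf Z$, and then to reabsorb the resulting integer-dislocation through a correction commutator. Guaranteeing that this correction really returns the element to $\mathcal Z$ while preserving a nontrivial $H'$-atom at $q$ is the delicate point; it requires fine control over $w(\mathbf Z)$ together with the freedom in Step 1 to choose $g_i$ so that the atom $[h_i,g_i]$ has support contained in a small standard dyadic subinterval, giving enough room to place $k$ entirely inside $J$ and away from $w(\mathbf Z)$.
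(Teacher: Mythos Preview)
There is a recurring gap in your proposal: you invoke elements of $\mathcal{K}'_X$ for proper subsets $X\subsetneq\frac{1}{2}\mathbf{Z}\setminus\mathbf{Z}$ as though they lie in $G_\rho$, but they generally do not. In Step~1 you take $\tilde g_i=\lambda_{X_i}(g_i)$ and claim $\mathcal{K}'_{X_i}\subseteq\mathcal{Z}$; however $\mathcal{Z}<G_\rho$ by Definition~\ref{Z}, and by Corollary~\ref{Phiimages} together with quasi-periodicity, any word in $\mathbf{S}_\rho$ must repeat its local action on every interval whose surrounding $\rho$-pattern matches, so it cannot be supported only on the intervals indexed by an arbitrary $X_i$. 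Thus $\tilde g_i\notin G_\rho$ in general, and $[f,\tilde g_i]$ need not lie in $G_\rho$, let alone in $\langle\langle f\rangle\rangle_{G_\rho}$. The same error recurs in Step~2 with $k\in\mathcal{K}'_{\{q\}}$, where in fact the argument above gives $\mathcal{K}'_{\{q\}}\cap G_\rho=\{\id\}$ for any single $q$. (Step~1's stated goal is also unnecessary: the definition of $\mathcal{Z}$ forces $f$ to be the identity in a neighbourhood of each integer, so every atom of $f$ already lies in $F'=H'$.)

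The paper circumvents this by using only $\lambda(k)\in\mathcal{K}'\subseteq G_\rho$, and by reversing your order of operations: it first forms $q=f^{-1}\lambda(k)^{-1}f\lambda(k)$ at a flag $j$ where $f$ already carries the chosen atom, and only then conjugates by a word $g\in G_\rho$ from Lemma~\ref{minimal} that transports the support at $j$ into a target interval $[s,s+1]$ inside the same block of a fixed finite-length partition supplied by Lemma~\ref{SpecialElements0}. The support of $k\in H'$ is chosen away from the finite set built from $\Phi(\mathbf{Z}\cdot g^{-1})$, its $f^{-1}$-image, their $\iota$-reflections, and their preimages under all atoms of $f$; this is exactly what guarantees $g^{-1}qg\in\mathcal{Z}$ with a nontrivial atom flagged at $[s,s+1]$. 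Finiteness of the resulting family follows because the construction depends only on the word $W_\rho(B_i)$ and the finitely many choices of $j,s$ within a block of fixed length $m$. Your ``main obstacle'' paragraph correctly identifies the difficulty, but the resolution you propose relies precisely on the elements that are unavailable.
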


\begin{proof}
Let $h$ be an atom of $f$ and let $X$ be the set of flags of $h$ in $f$.
Let $\{h_1,...,h_n\}$ be the set of atoms of $f$ that are not equal to $h$.
Applying Lemma \ref{SpecialElements0}, we obtain a partition of $\frac{1}{2}\mathbf{Z}$ into blocks $\{B_i\mid i\in \mathbf{Z}\}$
such that $|B_i|=m$ and $B_i\cap X\neq \emptyset$ $\text{ for each }i\in \mathbf{Z}$.
Since there are finitely many words of length $m$ in the letters $\{a,b,a^{-1},b^{-1}\}$,
the set of words $\{W_{\rho}(B_i)\mid i\in \mathbf{Z}\}$
is finite.

Let $j, B_i$ be a pair such that $j\in B_i\cap \frac{1}{2}\mathbf{Z}\setminus \mathbf{Z}$ and 
$\kappa_j(f)=h$.
Let $[s,s+1]$ be an interval where $s,s+1\in \mathbf{Z}\cap B_i$ and let $$I=\overline{Supp(f)}\cap [j-\frac{1}{2},j+\frac{1}{2}]$$
Note that $I$ is contained in a closed subinterval of $(j-\frac{1}{2},j+\frac{1}{2})$.
Using Lemma \ref{minimal}, find an element $$g=w_1...w_n\in G_{\rho}\qquad w_1,...,w_l\in \mathbf{S}_{\rho}$$ such that 
$$I\cdot g\subset [s,s+1]$$ and  $$I\cdot w_1...w_i\subset (inf(B_i),sup(B_i))\text{ for each }
1\leq i\leq l$$

Recall from Corollary \ref{FiniteImagePhi} that the set $$S_0=\{\Phi(z\cdot g^{-1})\mid z\in \mathbf{Z}\}$$ is finite and so the set $$S_1=S_0\cup (S_0\cdot \iota)$$
is also finite.
Next, we define $$S_2=\bigcup_{p\in \{h_1,...,h_n,h\}}(S_1\cdot p^{-1}\cup S_1\cdot (\iota\circ p^{-1} \circ \iota))$$
Note that $S=S_1\cup S_2$ is a finite set. 
So we can find an element $k\in H'$ such that the support of $k$ has exactly one component, denoted by $I_1$, whose closure is contained in $Supp(h)\setminus S$.

Since $\overline{Supp(k)}\subset Supp(h)$, and since $k$ has a finite set of transition points, it follows that $$h^{-1} k^{-1} hk\neq id\qquad Supp(h^{-1} k^{-1} hk)\subset Supp(h)$$
Additionally, since $\lambda(k)\in \mathcal{Z}$, it follows that $$q=f^{-1} \lambda(k)^{-1} f \lambda(k)\in (\langle \langle f\rangle \rangle_{G_{\rho}}\cap \mathcal{Z})\setminus \{id\}$$
Moreover, by design $q$ has the property that $q$ has an atom with flag at $j$ and $$Supp(q)\cap [j-\frac{1}{2},j+\frac{1}{2}]\subset I$$

Next, we observe that $$\overline{Supp(\lambda(k))}\cap (\mathbf{Z}\cup \mathbf{Z}\cdot g^{-1}\cup (\mathbf{Z}\cdot g^{-1}\cdot f^{-1}))=\emptyset$$
So it follows that $$g^{-1} f^{-1} \lambda(k)^{-1} f \lambda(k) g=g^{-1} q g\in (\langle \langle f\rangle \rangle_{G_{\rho}}\cap \mathcal{Z})\setminus \{id\}$$
and by design $g^{-1}qg$ has an atom with a flag at the interval $[s,s+1]$.

Note that in the above process the elements $k,g$ are determined by the fixed element $f$, the labelling on the block $B_i$ of a fixed length $m$ and a choice of $j,s\in B_i$.
Since there are only finitely many possible labellings on any given $B_i$ of length $m$, and finitely many choices of $j,s\in B_i$,
our conclusion follows.
\end{proof}

Let $h\in H' \setminus \{id\}$ and $h_1,...,h_n\in H'$.
The pair $$h, (h_1,...,h_n)$$ is said to be \emph{compatible} if $h$ admits a transition point $x\in (0,1)$ such that $x$ is not a transition point of any element in the set $\{h_i\mid h_i\neq h\}$.
Note that in this definition we treat $(h_1,...,h_n)$ as a multiset (i.e., with possibly several occurrences of the same element in the set).
This is done for the sake of flexibility in the arguments that follow.
Let $g\in \mathcal{Z}$ be a nonidentity element and let $h_1,...,h_n\in H$ be the atoms of $g$. 
A pair $h_i,g$ is said to be \emph{$\mathcal{Z}$-compatible}, if the pair $$h_i,(h_1,...,h_n)$$
is compatible.

\begin{lem}\label{confluence}
Let $h,h_1,...,h_n\in H'$ be elements such that $h\neq id$.
Then either the pair $$h, (h_1,...,h_n)$$ is compatible, or there is an element $g\in H'$ such that the pair 
$$[h^{-1} g h, g], ([h_1^{-1} g h_1, g],...,[h_n^{-1} g h_n, g])$$
is compatible.
\end{lem}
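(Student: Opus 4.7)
If the pair $h, (h_1, \ldots, h_n)$ is compatible, there is nothing to prove, so assume it is not; in particular every transition point of $h$ in $(0,1)$ is also a transition point of some $h_i \neq h$, and $h \neq id$. The plan is to find $g \in H'$ whose support is a small and carefully placed open interval, and then to exploit the transition points of $[h^{-1}gh, g]$ generated by the interaction between $g$ and its conjugate $h^{-1}gh$.

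Since $h, h_1, \ldots, h_n$ are piecewise linear with finitely many breakpoints, I would begin by selecting an open interval $J = (p, q) \subset Supp(h) \cap (0,1)$ on which every $h_i$ and $h$ restricts to an affine map. Because points near the boundary of a component of $Supp(h)$ have arbitrarily small displacement under $h$, the interval $J$ can be placed so that $J$ and $J \cdot h$ overlap without coinciding --- say in the configuration $p < p \cdot h < q < q \cdot h$. I would then take any $g \in H'$ with $Supp(g) = J$ and $g(x) > x$ on $(p, q)$.

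Once $g$ is fixed, the support of $[h^{-1}gh, g]$ can be analyzed by a direct piecewise-linear computation. On the left side region $(p, p \cdot h)$, where only $g$ acts nontrivially, a short check using $g(x) > x$ shows that the commutator is the identity; similarly, beyond the dynamically-defined point $a(q) := h(g(h^{-1}(q)))$, the commutator is trivial. Consequently $Supp([h^{-1}gh, g]) \subseteq (p \cdot h, a(q))$, and its transition points are contained in $\{p \cdot h, a(q)\}$. The analogous analysis applied to each $h_i$ produces transition points contained in $\{p \cdot h_i, a_i(q)\}$ where $a_i(q) = h_i(g(h_i^{-1}(q)))$; when $h_i|_J = h|_J$ the two commutators coincide and need not be distinguished, whereas for $h_i|_J \neq h|_J$ the pair $(h_i,g)$ generically produces distinct transition points. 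A final genericity step perturbs the endpoints $(p,q)$ (and, if necessary, tweaks $g$) to ensure $p \cdot h \notin \{p \cdot h_i, a_i(q)\}$ for every dangerous $h_i$; only finitely many codimension-one coincidences need to be avoided in the two-parameter space of endpoints, so this is possible.

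The main technical obstacle will be verifying that $p \cdot h$ is genuinely a transition point of $[h^{-1}gh, g]$ and not merely a candidate boundary point of its support: a careless choice of $g$ could cause cancellations pushing $\partial Supp([h^{-1}gh, g])$ away from $p \cdot h$. I would address this by prescribing the slopes of $g$ at $p$, $q$, and on a neighborhood of $p \cdot h$, and then checking by explicit affine arithmetic that the slope of the commutator just to the right of $p \cdot h$ is different from $1$, which forces a genuine transition there. A secondary concern is handling $h_i$ whose displacement on $J$ has opposite sign to that of $h$: these give rise to symmetric overlap configurations that require an analogous but separate case analysis to show that their transition points also avoid $p \cdot h$ under the genericity hypothesis.
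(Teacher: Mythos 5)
Your construction of $g$ and your care in checking that $p\cdot h$ is a genuine transition point of $[h^{-1}gh,g]$ are fine, and you correctly note that any $h_i$ agreeing with $h$ on $J$ produces the same commutator and is therefore excluded by the definition of compatibility. The genuine gap is in the treatment of the remaining $h_i$. First, the claim that the transition points of $[h_i^{-1}gh_i,g]$ are contained in $\{p\cdot h_i,\,a_i(q)\}$ is only plausible when $h_i$ reproduces the same configuration as $h$ (rightward displacement on $J$, with $J\cdot h_i$ overlapping $J$ on the right); for $h_i$ moving $J$ leftward, translating it, or fixing a point of $J$ with slope $\neq 1$ (so that $J\cdot h_i$ straddles $J$) the analysis is different, and you explicitly defer it. Moreover, even in the good configuration the support of the commutator need not be one interval: isolated interior fixed points are transition points in the paper's sense (they lie in $\partial Supp$), so the two-point containment is not justified. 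Second, the genericity step is a moving-target argument: the finite set of transition points of the competing commutators depends on $(p,q)$ and on $g$ itself, so ``avoiding finitely many codimension-one coincidences in the two-parameter space of endpoints'' is an assertion rather than a proof; you would need to show that the coincidence $p\cdot h\in\partial Supp([h_i^{-1}gh_i,g])$ cannot persist as you perturb, and nothing in your setup rules that out. (A minor additional point: for generic real endpoints $p,q$ there is no $g\in H'$ with $Supp(g)$ exactly $(p,q)$, since supports of elements of $F$ have constrained endpoints; this is fixable but signals that the perturbation is over a more rigid parameter set than you suggest.)

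For contrast, the paper avoids genericity entirely by a structural choice. It anchors at a transition point $x$ of $h$, observes that on a small one-sided interval each relevant $h_i$ is either the identity, moves the interval off itself, or is affine fixing $x$ with some slope $s_i$, and then takes $g$ with \emph{two} components of support $I_1,I_2$ chosen so that $I_1\cdot h\subset I_2$, $I_2\cdot h\cap I_2=\emptyset$, and for every slope $s_i\notin\{s,1/s\}$ the set $(I_1\cup I_2)\cdot h_i$ misses $I_1\cup I_2$. With these choices every commutator $[h_i^{-1}gh_i,g]$ is either equal to $[h^{-1}gh,g]$, or trivial, or supported in $I_2\cdot h^{-1}$, which is disjoint from $I_2$; hence any transition point of $[h^{-1}gh,g]$, all of which lie in $I_2$, witnesses compatibility. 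This is the device your proposal is missing: a choice of $g$ that forces all competing commutators away from the witness region by construction, rather than by perturbation.
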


\begin{proof}
Let $x$ be a transition point of $h$. We find a sufficiently small interval of the form $$I=(x-\epsilon, x)\subset (0,1)$$ 
such that whenever $x$ is not a transition point of $h_i$ (for any $1\leq i\leq n$), then either $h_i$ fixes $I$ pointwise, or $I\cdot h_i\cap I=\emptyset$.
In particular, for such an $h_i$ and for any element $g\in H'$ whose support lies in $I$, it holds that $$[h_i^{-1} g h_i, g]=id$$
Moreover, we assume that $h$ moves each point in $I$. 
(Note that to ensure this it may be necessary to work with an interval of the form $(x,x+\epsilon)$,
and in this case we can argue similarly.)
In this proof we shall find a $g$ such that $Supp(g)\subset I$ and that moreover satisfies the statement of the Lemma.

Recall that the set of transition points for any element of $G_{\rho}$ is discrete.
It follows that there is an $\epsilon_1\in (0,\epsilon)$ such that the following holds. 
For any $h_i\in \{h_1,...,h_n\}$, if $x$ is a transition point of $h_i$,
it holds that $h_i\restriction (x-\epsilon_1,x)$ is either the identity or an affine map of the form $$t\to s_i(t-x)+x\qquad \text{ with slope }s_i\in \mathbf{R}_{>0}$$
For each such $h_i$, the slope $s_i$ determines this map.
Moreover, we make our choice so that additionally $h\restriction (x-\epsilon_1,x)$ is also of this form and we denote the slope of $h\restriction (x-\epsilon_1,x)$ by $s$.
We assume without loss of generality that $s<1$ (the case $s>1$ can be dealt with in a similar fashion).

We find an element $g\in H'=F'$ and dyadic intervals $I_1,I_2\subset (x-\epsilon_1,x)$ with $sup(I_1)<inf(I_2)$ such that the following holds:
\begin{enumerate}
\item $g$ has precisely two components of support which are $I_1,I_2$.
\item $I_1\cdot h\subset I_2$, and hence $$\partial Supp([h^{-1} g h, g])\cap I_2\neq \emptyset$$
\item $I_2\cdot h\cap I_2=\emptyset$ and hence $$\partial Supp([h^{-1} g h, g])\cap (I_2\cdot h)= \emptyset$$
\item For each $1\leq i\leq n$ such that $x$ is a transition point of $h_i$, either $$s_i\in \{s,\frac{1}{s}\}\qquad \text{ or }\qquad ((I_1\cup I_2)\cdot h_i)\cap (I_1\cup I_2)=\emptyset$$ 
\end{enumerate}
It is easy to check that by design the pair $$[h^{-1} g h, g], ([h_1^{-1} g h_1, g],...,[h_n^{-1} g h_n, g])$$
is compatible.
\end{proof}

\begin{cor}\label{confluence1}
Let $f\in \mathcal{Z}$ be a nonidentity element.
Let $X$ be the set of flags of an atom $h$ of $f$.
Then either the pair $h,f$ is $\mathcal{Z}$-compatible, or there is an element $k\in \langle \langle f\rangle \rangle_{G_{\rho}}\cap \mathcal{Z}$
with an atom $l\in H'$ of $k$ with a set of flags $Y$ such that:
\begin{enumerate}
\item $X\subseteq Y$.
\item The pair $l,k$ is $\mathcal{Z}$-compatible.
\end{enumerate}
\end{cor}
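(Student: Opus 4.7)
The plan is to apply Lemma \ref{confluence} to the (finite, by Lemma \ref{type}) list of atoms of $f$ and then realise the abstract commutator it produces at the $\mathcal{Z}$-level via a specific commutator lying in the normal closure of $f$.

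First I would enumerate the atoms of $f$ as $h = h_0, h_1, \ldots, h_n$; each lies in $H'$ because $f \in \mathcal{Z}$ forces every atomic restriction to be compactly supported in the interior of its unit interval. Applying Lemma \ref{confluence} to $(h, (h_1, \ldots, h_n))$ yields a dichotomy. If this pair is already compatible, then $(h, f)$ is $\mathcal{Z}$-compatible by the very definition of $\mathcal{Z}$-compatibility, and the first alternative of the conclusion holds. Otherwise one receives $g \in H'$ for which
$$\bigl([h^{-1} g h, g],\ ([h_i^{-1} g h_i, g])_{i=1}^{n}\bigr)$$
is compatible in the abstract sense.

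In this second case I would set
$$l := [h^{-1} g h, g] \in H', \qquad k := [f^{-1} \lambda(g) f,\ \lambda(g)].$$
Since $g \in H'$ is compactly supported in $(0,1)$, $\lambda(g)$ lies in $\mathcal{Z}$, so $k$ does too as a commutator of $\mathcal{Z}$-elements. Because $\kappa_q \colon \mathcal{Z} \to H$ is a homomorphism with $\kappa_q(\lambda(g)) = g$ and with $\kappa_q(f)$ equal to the atom of $f$ at flag $q$, I would read off
$$\kappa_q(k) = [\kappa_q(f)^{-1}\, g\, \kappa_q(f),\ g].$$
This equals $l$ at each flag of $h$ and $[h_i^{-1} g h_i, g]$ at each flag of $h_i$; hence $l$ is an atom of $k$ whose flag set $Y$ contains $X$, giving condition $(1)$, and the abstract compatibility produced by Lemma \ref{confluence} transfers to $\mathcal{Z}$-compatibility of $(l, k)$, giving condition $(2)$, since the atoms of $k$ distinct from $l$ lie among $\{[h_i^{-1} g h_i, g] : [h_i^{-1} g h_i, g] \ne l\}$.

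The main step I expect to require actual thought is the membership $k \in \langle\langle f\rangle\rangle_{G_\rho}$, since $\lambda(g)$ itself is a priori outside the normal closure. I would handle it by the short quotient observation: in $G_\rho/\langle\langle f\rangle\rangle_{G_\rho}$ the image of $f$ is trivial, so $f^{-1}\lambda(g)f$ and $\lambda(g)$ map to the same coset and $k = [f^{-1}\lambda(g)f, \lambda(g)]$ maps to the identity. Everything else in the argument is unwinding the definitions of atoms, flags and $\mathcal{Z}$.
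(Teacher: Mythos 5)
Your proposal is correct and follows essentially the same route as the paper: apply Lemma \ref{confluence} to the atoms of $f$, set $k=[f^{-1}\lambda(g)f,\lambda(g)]$, and read off the atoms of $k$ via the maps $\kappa_q$ to transfer compatibility to $\mathcal{Z}$-compatibility. The only cosmetic difference is that the paper certifies $k\in\langle\langle f\rangle\rangle_{G_{\rho}}$ by an explicit rewriting of the commutator as a product of conjugates of $f^{\pm1}$, while you use the equivalent quotient argument.
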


\begin{proof}
Let $h,h_1,...,h_n\in H'$ be the atoms of $f$.
We apply Lemma \ref{confluence} to the pair $$h, (h_1,...,h_n)$$
to obtain an element $g\in H'$ such that $$[h^{-1} g h, g], ([h_1^{-1} g h_1, g],...,[h_n^{-1} g h_n, g])$$
is compatible.

Define $$k=[f^{-1}\lambda(g) f, \lambda(g)]$$
Note that $k\in \langle \langle f\rangle \rangle_{G_{\rho}}$ since $$k=\lambda(g) ((\lambda(g)^{-1} f^{-1} \lambda(g)) f (\lambda(g) f^{-1} \lambda(g)^{-1}) f) \lambda(g)^{-1}$$
It follows immediately from the definition of $\lambda$ and our hypothesis that the pair $[h^{-1} g h, g], k$ is $\mathcal{Z}$-compatible, and the set of flags of $[h^{-1} g h, g]$
contains $X$.
\end{proof}

\begin{lem}\label{confluence2}
Let $f\in \mathcal{Z}$ be a nonidentity element and $h$ be an atom of $f$.
Let $X$ be the set of flags of $h$ in $f$.
If the pair $h,f$ is $\mathcal{Z}$-compatible, then there is a nonidentity element $g\in \langle \langle f\rangle \rangle_{G_{\rho}}\cap \mathcal{K}_X'$.
\end{lem}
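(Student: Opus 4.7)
The plan is to realize $g$ as a commutator of the form $g := [f^{-1}\lambda(k) f, \lambda(k)]$ for a suitably chosen $k\in H'$, closely mirroring the construction used in Corollary \ref{confluence1}.

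First, I would invoke $\mathcal{Z}$-compatibility to pick a transition point $x\in (0,1)$ of $h$ that is not a transition point of any other atom $h_i$ of $f$. For each $h_i \neq h$, the absence of a transition at $x$ forces either $x \notin \overline{Supp(h_i)}$ (in which case $h_i$ is the identity on a neighborhood of $x$) or $x \in Supp(h_i)$ (in which case $h_i(x) \neq x$, so by continuity a small enough neighborhood of $x$ is mapped by $h_i$ disjointly from itself). Lemma \ref{type} guarantees that the set of atoms of $f$ is finite, so I can shrink to a common open interval $I$ lying to one side of $x$ on which $h$ acts nontrivially (such a side exists because $x$ is a transition point of $h$) and such that for every $h_i \neq h$, either $h_i$ fixes $I$ pointwise or $h_i(I)\cap I = \emptyset$. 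Since $x$ is a transition point of a PL element of $F$ with dyadic breakpoints, $x$ is itself dyadic, so I can arrange $I$ to contain standard dyadic subintervals.

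Following the recipe from the proof of Lemma \ref{confluence}, I would then choose standard dyadic intervals $I_1, I_2 \subset I$ with $\sup I_1 < \inf I_2$, $I_1 \cdot h \subset I_2$, and $I_2 \cdot h \cap I_2 = \emptyset$, and build $k \in F' = H'$ whose support is precisely $I_1 \cup I_2$. The support computation from Lemma \ref{confluence} then yields $[h^{-1} k h, k] \neq id$. Meanwhile, for each atom $h_i \neq h$ one has $[h_i^{-1} k h_i, k] = id$: if $h_i$ fixes $I$ pointwise then $h_i^{-1} k h_i = k$, while if $h_i(I)\cap I = \emptyset$ then $Supp(h_i^{-1} k h_i) = Supp(k)\cdot h_i$ is disjoint from $Supp(k)$, so the two homeomorphisms commute.

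Now set $g := [f^{-1}\lambda(k) f, \lambda(k)]$. Membership in $\langle\langle f\rangle\rangle_{G_\rho}$ follows from exactly the identity already used in Corollary \ref{confluence1},
\[
g = \lambda(k)\bigl( (\lambda(k)^{-1} f^{-1} \lambda(k))\cdot f \cdot (\lambda(k) f^{-1} \lambda(k)^{-1})\cdot f \bigr)\lambda(k)^{-1},
\]
which exhibits $g$ as a conjugate of a product of conjugates of $f^{\pm 1}$. Restricting to any interval $[n,n+1]$ and using that $f\restriction [n,n+1]$ is affinely (and possibly $\iota$-)conjugate to the atom $\kappa_{n+\frac{1}{2}}(f)$ while $\lambda(k)\restriction [n,n+1]$ is the corresponding conjugate of $k$, one checks that $g\restriction [n,n+1]$ is (up to $\iota$-conjugation) $[\kappa_{n+\frac{1}{2}}(f)^{-1} k \kappa_{n+\frac{1}{2}}(f), k]$; this equals $[h^{-1} k h, k]$ when $n+\frac{1}{2} \in X$ and is the identity in every other case (either by the choice of $k$ for flags of other atoms, or because $\kappa_{n+\frac{1}{2}}(f) = id$ makes the commutator trivially vanish). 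Comparing with the definition of $\lambda_X$ yields $g = \lambda_X([h^{-1} k h, k]) \in \mathcal{K}_X' \setminus \{id\}$, finishing the argument. The main obstacle is the simultaneous balancing act in the choice of $k$ -- nontrivial commutation with $h$ but trivial commutation with every other atom -- which is exactly what the $\mathcal{Z}$-compatibility hypothesis purchases through the support geometry of Lemma \ref{confluence}.
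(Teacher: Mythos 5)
Your proof is correct and takes essentially the same route as the paper's: the paper also produces the element as $g=[f^{-1}\lambda(k)f,\lambda(k)]$ with $k\in H'$ supported in a small interval near the compatible transition point $x$, chosen so that $k$ commutes with (the conjugate by) every other atom but not with $h$, the only cosmetic difference being that the paper takes $k$ with a single component of support straddling $x$ and uses the one-sided movement of $h$ there, rather than your two-interval ping-pong borrowed from Lemma \ref{confluence}. One harmless misstatement: a transition point of an element of $F$ need not be dyadic (an affine piece such as $t\mapsto 4t-1$ fixes $\tfrac{1}{3}$), but this does not affect your argument since every nonempty open subinterval of $(0,1)$ contains standard dyadic intervals in any case.
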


\begin{proof}
Let $h,h_1,...,h_n\in H'$ be the atoms of $f$.
Let $x\in (0,1)$ be a transition point of $h$ such that for each $h_i\in \{h_1,...,h_n\}$, $x$ is not a transition point of $h_i$.
Then we can choose a small dyadic interval $$I=(x-\epsilon, x+\epsilon)\subset (0,1)$$ and an element $l\in H$ such that:
\begin{enumerate}
\item $l$ has one component of support and $Supp(l)=I$.
\item For each $1\leq i\leq n$, either $I\cdot h_i\cap I=\emptyset$ or $h_i$ fixes each point in $I$.
In particular, it holds that $$[h_i^{-1} l h_i, l]=id$$
\item Either $h$ moves every point in $[x-\epsilon,x)$ or $h$ moves every point in $(x,x+\epsilon]$.
In particular, it holds that $$[h^{-1} l h, l]\neq id$$
\end{enumerate}
Note that the third requirement is possible to achieve since the set of transition points of $h$ is finite.
The required element is then $$g=[f^{-1} \lambda(l) f, \lambda(l)]$$
Note that $g\in \mathcal{K}_X'$ by design and $g\in \langle \langle f\rangle \rangle_{G_{\rho}}$ since $$g=\lambda(l) ((\lambda(l)^{-1} f^{-1} \lambda(l)) f (\lambda(l) f^{-1} \lambda(l)^{-1}) f) \lambda(l)^{-1}$$
\end{proof}

\begin{prop}\label{SpecialElements2}
Let $f\in \mathcal{Z}$ be a nonidentity element.
Let $h\in H$ be an atom of $f$ and let $X$ be the set of flags of $h$ in $f$.
Then there is a set $Y\subset \frac{1}{2}\mathbf{Z}\setminus \mathbf{Z}$ such that $X\subseteq Y$ and $\mathcal{K}_Y'\cap \langle \langle f\rangle \rangle_{G_{\rho}}\neq \{id\}$.
\end{prop}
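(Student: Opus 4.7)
\textbf{Proof proposal for Proposition \ref{SpecialElements2}.} The plan is to deduce the statement directly by chaining together Corollary \ref{confluence1} and Lemma \ref{confluence2}, which were clearly designed in tandem for this purpose. The preliminary observation I would make explicit is that because $f \in \mathcal{Z}$ fixes each integer and no integer is a transition point of $f$, the restriction $f \restriction [q-\tfrac{1}{2},q+\tfrac{1}{2}]$ is compactly supported in the interior for every $q \in \tfrac{1}{2}\mathbf{Z}\setminus \mathbf{Z}$, and hence every atom of $f$ automatically lies in $H'$ (not merely in $H$). In particular the given atom $h$ belongs to $H'$, so the compatibility/$\mathcal{Z}$-compatibility notions apply to it.

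Next I would apply Corollary \ref{confluence1} to the pair $h, f$, producing one of two alternatives. If the pair $h,f$ is already $\mathcal{Z}$-compatible, I set $Y := X$ and apply Lemma \ref{confluence2} directly to $f, h$ to produce a nonidentity element of $\langle\langle f\rangle\rangle_{G_\rho} \cap \mathcal{K}_X'$, which gives what is claimed with $Y=X$. Otherwise, Corollary \ref{confluence1} yields an element $k \in \langle\langle f\rangle\rangle_{G_\rho}\cap \mathcal{Z}$, an atom $l \in H'$ of $k$, and a set of flags $Y \supseteq X$ of $l$ in $k$, such that the pair $l, k$ is $\mathcal{Z}$-compatible. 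I then apply Lemma \ref{confluence2} to $k$ and $l$ to obtain a nonidentity element $g \in \langle\langle k\rangle\rangle_{G_\rho} \cap \mathcal{K}_Y'$. Since $k \in \langle\langle f\rangle\rangle_{G_\rho}$ and the normal closure of a normal closure is the normal closure, we have $\langle\langle k\rangle\rangle_{G_\rho} \subseteq \langle\langle f\rangle\rangle_{G_\rho}$, so $g$ witnesses the desired conclusion.

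In effect, Corollary \ref{confluence1} is a ``regularization'' step that forces the pair to become $\mathcal{Z}$-compatible while only enlarging the flag set $X$, and Lemma \ref{confluence2} then converts $\mathcal{Z}$-compatibility into the production of a nonidentity commutator landing inside $\mathcal{K}_Y'$. Because all the nontrivial work has already been packaged in those two results, there is no serious obstacle at this stage of the argument; the only real point of care is verifying that the normal-closure membership is preserved in the second case (i.e.\ $\langle\langle k\rangle\rangle_{G_\rho} \subseteq \langle\langle f\rangle\rangle_{G_\rho}$) and keeping straight which atom plays the role of $h$ when Lemma \ref{confluence2} is invoked.

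With Proposition \ref{SpecialElements2} in hand, the deduction of Proposition \ref{mainprop}(1) is immediate: Lemma \ref{SpecialElements} provides a nonidentity element of $\langle\langle f\rangle\rangle_{G_\rho}\cap \mathcal{Z}$, Proposition \ref{SpecialElements1} provides finitely many elements of $\langle\langle f\rangle\rangle_{G_\rho}\cap \mathcal{Z}$ whose atom flag-sets cover $\tfrac{1}{2}\mathbf{Z}\setminus \mathbf{Z}$, and Proposition \ref{SpecialElements2} converts each such element into a nontrivial intersection $\langle\langle f\rangle\rangle_{G_\rho}\cap \mathcal{K}_{Y_i}'$ with $Y_i$ containing the corresponding flag set, so that $\bigcup_i Y_i = \tfrac{1}{2}\mathbf{Z}\setminus \mathbf{Z}$ as required.
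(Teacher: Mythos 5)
Your proposal is correct and follows essentially the same route as the paper: apply Corollary \ref{confluence1} to pass (if necessary) to a $\mathcal{Z}$-compatible pair with an enlarged flag set $Y\supseteq X$, then apply Lemma \ref{confluence2} and use that the normal closure of an element of $\langle\langle f\rangle\rangle_{G_{\rho}}$ is contained in $\langle\langle f\rangle\rangle_{G_{\rho}}$. Your preliminary observation that every atom of an element of $\mathcal{Z}$ automatically lies in $H'$ is a point the paper leaves implicit, and it is a correct and worthwhile clarification.
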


\begin{proof}
Applying Corollary \ref{confluence1}, we obtain an element $f_1\in \langle \langle f\rangle \rangle_{G_{\rho}}\cap \mathcal{Z}$
with an atom $h_1\in H'$ with a set of flags $Y$ such that:
\begin{enumerate}
\item $X\subseteq Y$.
\item The pair $h_1,f_1$ is $\mathcal{Z}$-compatible.
\end{enumerate}
(Unless the pair $f,h$ is $\mathcal{Z}$-compatible, in which case we set $f_1=f,h_1=h$.)
Next, we apply Lemma \ref{confluence2} to obtain a nonidentity element $$g\in \langle \langle f_1\rangle \rangle_{G_{\rho}}\cap \mathcal{K}_Y'\subseteq \langle \langle f\rangle \rangle_{G_{\rho}}\cap \mathcal{K}_Y'$$
\end{proof}

\begin{proof}[Proof of Proposition \ref{mainprop}]
Let $g\in G_{\rho}$ be a nonidentity element.
Then from Lemma \ref{SpecialElements} it follows that there is a nonidentity element $f\in \mathcal{Z}\cap \langle \langle g\rangle \rangle_{G_{\rho}}$.
From Proposition \ref{SpecialElements1} it follows that there are elements $$g_1,...,g_n\in \langle \langle f\rangle \rangle_{G_{\rho}}\cap \mathcal{Z} \qquad h_1,...,h_n\in H$$ such that each $h_i$ is an atom of $g_i$ with flags $X_i$ such that $$\bigcup_{1\leq i\leq n}X_i=\frac{1}{2}\mathbf{Z}\setminus \mathbf{Z}$$
Applying Proposition \ref{SpecialElements2} to each triple $h_i,g_i,X_i$, we obtain sets $Y_1,...,Y_n$ and nonidentity elements $l_1,...,l_n$ such that:
\begin{enumerate}
\item $l_i\in \langle \langle g_i\rangle \rangle_{G_{\rho}}\subseteq \langle \langle f\rangle \rangle_{G_{\rho}}\subseteq \langle \langle g\rangle \rangle_{G_{\rho}}$ for each $1\leq i\leq n$.
\item $l_i\in \mathcal{K}_{Y_i}'$ for each $1\leq i\leq n$.
\item $X_i\subseteq Y_i$ and hence  $$\bigcup_{1\leq i\leq n}Y_i=\frac{1}{2}\mathbf{Z}\setminus \mathbf{Z}$$
\end{enumerate}
\end{proof}

\section{Uncountably many isomorphism types}\label{continuum}

In this section we shall prove Theorem \ref{continuumsimple}.
The construction of the groups $G_{\rho,\alpha}$ for a given $\alpha\in (0,1]\setminus \mathbf{Z}[\frac{1}{2}]$
will be obtained in exactly the same fashion as the construction of $G_{\rho}$ with the exception that the group $H$ in the construction
shall be replaced by an overgroup $H_{\alpha}$ defined as follows.
Recall from the preliminaries that $$\mathcal{N}=\{\Gamma_{\alpha}=\langle F,f_{\alpha}\rangle\mid \alpha\in (0,1]\setminus \mathbf{Z}[\frac{1}{2}]\}$$
where $f_{\alpha}\in \textup{PL}^+([0,1])$ is a chosen element whose slope at $0$ is $\alpha$.

\begin{defn}
Let $\mathcal{I}$ be the set of standard dyadic subintervals of $(0,1)$.
(Recall that a standard dyadic interval is always closed.)
Using Lemma \ref{TransitiveStandard}, for each pair $I,J\in \mathcal{I}$,
we fix an element $f_{I,J}\in F'$ such that $I\cdot f_{I,J}=J$ and $f_{I,J}\restriction I$ is linear.
We make the choices such that $f_{I,J}^{-1}=f_{J,I}$.

Fix an interval $I\in \mathcal{I}$.
Recall that $F_I$ is the subgroup of $F$ consisting of elements supported in $I$,
and that $F_I\cong F$.
Let $\sigma_{\alpha} \in \textup{PL}^+([0,1])$ be a homeomorphism that satisfies:
\begin{enumerate}
\item $\langle \sigma_{\alpha},F_I\rangle \restriction I\cong \Gamma_{\alpha}$.
\item $\sigma_{\alpha}\restriction [0,1]\setminus I=id$.
\end{enumerate}

Next, we fix a standard dyadic interval $J$ in $(0,1)$ such that
$sup(J)<inf(I)$.
Let $\xi_{\alpha}$ be a homeomorphism of $[0,1]$ defined as:
$$\xi_{\alpha}\restriction I=\sigma_{\alpha}\qquad \xi_{\alpha}\restriction J=f_{J,I}\sigma_{\alpha}^{-1} f_{I,J}\restriction J\qquad \xi_{\alpha}\restriction [0,1]\setminus (I\cup J)=id$$
Note that since the abelianization of each $\Gamma_{\alpha}\in \mathcal{N}$ is $\mathbf{Z}^3$ (as discussed in the preliminaries),
it follows that $\langle \xi_{\alpha}, F_I\rangle \cong \Gamma_{\alpha}$. 
Finally, we define the group $H_{\alpha}$ as:
$$H_{\alpha}=\langle H, \xi_{\alpha} \rangle$$
Since the group $$\langle F_I, \xi_{\alpha}\rangle\cong \Gamma_{\alpha}$$ it follows that the group $H_{\alpha}$ contains $\Gamma_{\alpha}$ as an abstract subgroup.
\end{defn}

Now we are ready to define the group $G_{\rho,\alpha}$.

\begin{defn}
We define the homeomorphisms $$\gamma_{\alpha},\omega_{\alpha}:\mathbf{R}\to \mathbf{R}$$ as follows for each $n\in \mathbf{Z}$:
$$\gamma_{\alpha}\restriction [n,n+1]\cong_{T}\xi_{\alpha} \qquad \text{ if } \rho(n+\frac{1}{2})=b$$
$$\gamma_{\alpha}\restriction [n,n+1]\cong_T(\iota\circ \xi_{\alpha}  \circ \iota) \qquad \text{ if }\rho(n+\frac{1}{2})=b^{-1}$$
$$\omega_{\alpha}\restriction [n-\frac{1}{2},n+\frac{1}{2}]\cong_T \xi_{\alpha} \qquad \text{ if }\rho(n)=a$$
$$\omega_{\alpha}\restriction [n-\frac{1}{2},n+\frac{1}{2}]\cong_T (\iota\circ \xi_{\alpha} \circ \iota)\qquad \text{ if }\rho(n)=a^{-1}$$

The group $G_{\rho,\alpha}$ is defined as $$G_{\rho,\alpha}=\langle G_{\rho}, \gamma_{\alpha},\omega_{\alpha}\rangle$$
\end{defn}

\begin{remark}
Recall from Proposition \ref{freesubgroups} that the group $G_{\rho}$ contains free subgroups.
Since $G_{\rho,\alpha}$ contains $G_{\rho}$ as a subgroup, it also contains free subgroups.
\end{remark}

Another tool we shall require for the generalization is the following theorem, due to Higman (See \cite{Higman}).
Let $\Gamma$ be a group of bijections of some set $E$. 
(The reader may specialise this as $E=[0,1]$ and regard $\Gamma$ as a group of homeomorphisms of $E$). 

\begin{thm}\label{Higman}
(Higman's simplicity criterion) Suppose that for all $f_1,f_2,f_3\in \Gamma\setminus \{1_{\Gamma}\}$, there is an $f_4\in \Gamma$ such that:
$$S\cdot f_4\cdot f_3 \cap S\cdot f_4=\varnothing\qquad  \text{where } S=Supp(f_1)\cup Supp(f_2)$$
Then $\Gamma'$ is simple.
\end{thm}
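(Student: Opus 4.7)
The plan is to follow Higman's classical displacement argument. Fix a nontrivial normal subgroup $N \trianglelefteq \Gamma'$ and an element $f_1 \in N$ different from $1_\Gamma$. Since $\Gamma'$ is generated by commutators $[g_1,g_2]$ with $g_1,g_2 \in \Gamma$, it suffices to prove that every such commutator lies in $N$.

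The engine of the argument is a commuting-conjugates lemma. Given any nontrivial $f_2 \in \Gamma$ and any $f_3 \in \Gamma$, the hypothesis yields $f_4 \in \Gamma$ such that $S \cdot f_4 \cdot f_3 \cap S \cdot f_4 = \emptyset$, where $S = \operatorname{Supp}(f_1) \cup \operatorname{Supp}(f_2)$. Under the paper's right-action convention one has $\operatorname{Supp}(f_4^{-1} h f_4) = \operatorname{Supp}(h) \cdot f_4$, so the elements $h := f_4^{-1} f_1 f_4$ and $f_3^{-1} h f_3$ have disjoint supports and therefore commute; the same holds with $f_2$ in place of $f_1$. Specialising to $f_2 := [g_1,g_2]$ (assumed nontrivial, else there is nothing to prove) and $f_3 := g_2$, the plan is to apply the identity $[ab,c] = [a,c]^b \cdot [b,c]$ iteratively, together with the disjoint-support cancellations, to rewrite a $\Gamma'$-conjugate of $[g_1,g_2]$ as a product in which one factor is a commutator manifestly built from $f_1$ (hence in $N$ by $\Gamma'$-normality) and the remaining factors can be absorbed by iterating the same manipulation.

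The main obstacle, and the technical crux, is that $N$ is only assumed normal in $\Gamma'$, while the conjugator $f_4$ supplied by the hypothesis lies in $\Gamma$ and need not lie in $\Gamma'$. The workaround is to replace each appearance of the raw conjugation $f_4^{-1} h f_4$ by $h \cdot [h^{-1},f_4]$: the commutator factor lies in $\Gamma'$ and can be legitimately manipulated modulo $N$, and since the target $[g_1,g_2]$ is itself a commutator, all the spurious $\Gamma$-conjugations must cancel modulo $N$. Once this bookkeeping is carried out, every commutator in $\Gamma$ has been shown to lie in $N$, whence $N = \Gamma'$ and the simplicity of $\Gamma'$ follows.
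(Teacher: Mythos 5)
Your proposal stalls exactly at the point you yourself flag as the crux, and the flagged "workaround" does not repair it. The displacement mechanism is set up correctly: with $S=Supp(f_1)\cup Supp(f_2)$ and $Sf_4f_3\cap Sf_4=\varnothing$, the conjugates $f_4^{-1}f_1f_4$, $f_4^{-1}f_2f_4$ commute with their $f_3$-conjugates, and the standard double-commutator identity then expresses (a conjugate of) $[g_1,g_2]$ as a product of conjugates of $f_1^{\pm1}$ by elements of $\Gamma$. But that computation only proves that the normal closure of $f_1$ \emph{in $\Gamma$} contains $\Gamma'$, i.e.\ that every nontrivial normal subgroup of $\Gamma$ contains $\Gamma'$. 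The theorem asserts more: $N$ is only assumed normal in $\Gamma'$, the conjugator $f_4$ and the letters $g_1,g_2$ lie in $\Gamma$, and nothing forces the resulting conjugates of $f_1$ to stay in $N$. Your proposed fix --- rewriting $f_4^{-1}hf_4$ as $h\cdot[h,f_4]$ (note the identity is $f_4^{-1}hf_4=h\,[h,f_4]$, not $h\,[h^{-1},f_4]$) and arguing that the commutator factor ``lies in $\Gamma'$ and can be manipulated modulo $N$'' --- conflates membership in $\Gamma'$ with membership in $N$: an element of $\Gamma'$ is in no way negligible modulo $N$, and since $N$ is not normal in $\Gamma$ there is no meaningful ``working modulo $N$'' for words involving arbitrary elements of $\Gamma$. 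The assertion that ``all the spurious $\Gamma$-conjugations must cancel modulo $N$'' is precisely the statement that needs proof, and it is not a bookkeeping matter: carrying the naive computation through carefully yields only that some $\Gamma$-conjugate of $[g_1,g_2]$ lies in $N$, which is not membership of $[g_1,g_2]$ in $N$.

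A complete argument (Higman's) needs further ideas that your sketch never supplies: for instance, one first proves the weak statement above, then shows $\Gamma'$ is perfect (nonabelianity of $\Gamma'$ follows from the hypothesis applied to a triple $(c,c,c)$ with $c\in\Gamma'$, and then $\Gamma''$ is a nontrivial normal subgroup of $\Gamma$, hence contains $\Gamma'$), and finally, for $N\trianglelefteq\Gamma'$ nontrivial, one must produce the commutators of $\Gamma'$ inside $N$ using only conjugations that normalize $N$ --- the genuinely delicate step, since the hypothesis only hands you $f_4\in\Gamma$. None of this is addressed in the proposal. (For comparison: the paper does not prove this statement at all; it quotes it from Higman's monograph, so the burden of the proof is entirely on your reconstruction, which at present has this gap.)
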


\begin{lem}\label{perfect}
For each $\alpha\in (0,1]\setminus \mathbf{Z}[\frac{1}{2}]$, the group $H_{\alpha}$ satisfies the following.
\begin{enumerate}
\item $H_{\alpha}'$ is simple and consists of precisely the set of elements of $H_{\alpha}$ that are compactly supported in $(0,1)$.
\item Each element in $H_{\alpha}$ has finitely many breakpoints and finitely many transition points.
\end{enumerate}
\end{lem}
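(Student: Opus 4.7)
The plan is to mirror the argument of Lemma~\ref{3gen}, with $H_\alpha$ in place of $H$ and $\xi_\alpha$ acting as the ``extra'' generator whose commutator class must be controlled.

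For claim~$(2)$, every generator of $H_\alpha$ (the three generators of $H$ together with $\xi_\alpha$) lies in $\textup{PL}^+([0,1])$ and therefore has finitely many breakpoints. A composition of two PL homeomorphisms with finitely many breakpoints is again PL with finitely many breakpoints, since the breakpoints of $fg$ are contained in the breakpoints of $g$ together with the $g^{-1}$-preimages of the breakpoints of $f$. Transition points of a PL homeomorphism lie in its breakpoint set, so they are finite in number as well.

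For claim~$(1)$, the inclusion $H_\alpha' \subseteq \{f \in H_\alpha : \overline{Supp(f)} \subset (0,1)\}$ is immediate from the germ homomorphism. The slope at $0$ (resp.\ at $1$) of every generator of $H_\alpha$ is either a power of $2$ (for the $H$-generators, which lie in $F$) or $1$ (for $\xi_\alpha$, which is the identity near the endpoints). The log-slopes at $0$ and $1$ therefore provide a homomorphism $H_\alpha \to \mathbf{Z}^2$; any commutator lies in its kernel, which consists exactly of PL homeomorphisms of $[0,1]$ that are the identity near $\{0,1\}$.

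For the reverse inclusion, the key step is $\xi_\alpha \in H_\alpha'$; once this is established, the commutator-collection argument of Lemma~\ref{3gen} applies verbatim. An arbitrary $g \in H_\alpha$ written as a word in the generators can be rearranged as $g = Q \cdot P$, where $Q$ is a product of $H_\alpha$-conjugates of $\xi_\alpha^{\pm 1}$ (so $Q \in H_\alpha'$) and $P$ is a product of $H$-generators. When $g$ is compactly supported, so is $P$; by Lemma~\ref{3gen}, $P \in H' = F' = F'' \subset H_\alpha'$, and therefore $g \in H_\alpha'$. To verify $\xi_\alpha \in H_\alpha'$ one exploits the formal identity $\xi_\alpha = \sigma_\alpha \cdot (f_{J,I}\sigma_\alpha^{-1}f_{J,I}^{-1}) = [\sigma_\alpha, f_{J,I}]$ built into the definition of $\xi_\alpha$, together with the subgroup relation $\langle F_I, \xi_\alpha\rangle \cong \Gamma_\alpha$. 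Although $\sigma_\alpha$ itself need not lie in $H_\alpha$, the $H_\alpha$-commutator class of $\xi_\alpha$ is pinned down by the $\Gamma_\alpha$-structure: the image of $\xi_\alpha$ in $\Gamma_\alpha^{\mathrm{ab}} \cong \mathbf{Z}^3$, coming from the ``internal'' germs at $\inf I$ and $\sup I$, vanishes on passage to $H_\alpha^{\mathrm{ab}}$, because elements of $F' \subset H_\alpha$ move these endpoints to arbitrary other dyadic points and thereby trivialize the corresponding germ classes.

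Finally, simplicity of $H_\alpha'$ follows by applying Higman's criterion (Theorem~\ref{Higman}) to $H_\alpha'$ acting on $(0,1)$: every element of $H_\alpha'$ is compactly supported, so given any nonidentity triple $f_1, f_2, f_3 \in H_\alpha'$, the set $S = Supp(f_1) \cup Supp(f_2)$ is compactly contained in $(0,1)$; picking a small open $V \subset (0,1)$ with $V \cdot f_3 \cap V = \emptyset$, the transitivity of $F' \subset H_\alpha'$ on compactly supported subintervals (Lemmas~\ref{TransitiveStandard} and~\ref{TransitiveStandard2}) provides $f_4 \in F'$ with $S \cdot f_4 \subset V$; this verifies the hypothesis of Theorem~\ref{Higman}, giving $(H_\alpha')'$ simple. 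Perfectness $H_\alpha' = (H_\alpha')'$ is a formal consequence of the commutator-collection argument combined with $F' = F''$, so $H_\alpha'$ itself is simple. The chief obstacle is the step $\xi_\alpha \in H_\alpha'$: the natural commutator decomposition uses $\sigma_\alpha \notin H_\alpha$ and must be realized in $H_\alpha$ indirectly via the embedding $\Gamma_\alpha \hookrightarrow H_\alpha$.
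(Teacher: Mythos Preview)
Your overall architecture matches the paper's: reduce to showing $\xi_\alpha \in H_\alpha'$, then invoke Higman together with a commutator-collection argument and $F'=F''$. The gap is precisely where you flag it yourself --- the step $\xi_\alpha \in H_\alpha'$. The identity $\xi_\alpha = [\sigma_\alpha, f_{J,I}]$ is formally correct but useless, since $\sigma_\alpha \notin H_\alpha$. Your fallback argument --- that the germ classes at $\partial I$ ``vanish on passage to $H_\alpha^{\mathrm{ab}}$ because $F'$ moves the endpoints'' --- is not a proof: conjugation by $F'$ shows only that $\xi_\alpha$ equals its conjugates in $H_\alpha^{\mathrm{ab}}$, which is automatic in any abelianization and does not force triviality. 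You must actually exhibit $\xi_\alpha$ as a product of commutators of elements of $H_\alpha$.

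The paper does this with an explicit formula. Choose a third standard dyadic interval $I_1 \subset (0,1)$ with $\sup(I) < \inf(I_1)$, and, via Lemma~\ref{TransitiveStandard2}, elements $f_1, f_2 \in F'$ with $J \cdot f_1 = J$, $I \cdot f_1 = I_1$, $I \cdot f_2 = J$, $I_1 \cdot f_2 = I$, all linearly on the indicated intervals. A direct check gives
\[
\xi_\alpha \;=\; f_2^{-1}\bigl(f_1^{-1}\,\xi_\alpha\, f_1\, \xi_\alpha^{-1}\bigr)f_2,
\]
a conjugate of a commutator of compactly supported elements of $H_\alpha$. The mechanism is that conjugation by $f_1$ moves the $I$-piece of $\xi_\alpha$ off to $I_1$ while fixing the $J$-piece; multiplying by $\xi_\alpha^{-1}$ then cancels the $J$-piece entirely, leaving $\sigma_\alpha^{-1}$ on $I$ and a linear conjugate of $\sigma_\alpha$ on $I_1$; finally $f_2$ carries $I \to J$ and $I_1 \to I$, reconstituting $\xi_\alpha$. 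This is exactly the ``indirect realization'' you allude to in your last sentence but do not carry out; without it the proof is incomplete.
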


\begin{proof}
Let $H_{\alpha,c}$ be the subgroup of $H_{\alpha}$ consisting of the elements in $H_{\alpha}$ that are compactly supported in $(0,1)$.
Note that this is generated by $H',\xi_{\alpha}$.
We apply Higman's simplicity criterion above to $H_{\alpha,c}$ to obtain that the derived subgroup of this group is simple.
(The proof is straightforward, since $F'\subset H_{\alpha,c}$).
It suffices to show that $H_{\alpha,c}$ is perfect, and so it suffices to show that the generator $\xi_{\alpha}$ is expressible as a commutator of elements in the group.
Let $I_1$ be a standard dyadic interval in $(0,1)$ such that $sup(I)<inf(I_1)$.
Using Lemma \ref{TransitiveStandard2} we find elements $f_1,f_2\in F'\subset H_{M,c}$ such that:
$$J\cdot f_1=J\qquad I\cdot f_1=I_1$$
and $f_1\restriction J, f_1\restriction I$ are linear.
 $$I\cdot f_2=J\qquad I_1\cdot f_2=I$$
and $f_2\restriction I, f_2\restriction I_1$ are linear.
The required word is then $$\xi= f_2^{-1} (f_1^{-1} \xi_{\alpha} f_1 \xi_{\alpha}^{-1}) f_2$$
The second statement of the Lemma holds since it holds for any subgroup of $\textup{PL}^+([0,1])$.
\end{proof}

\begin{proof}[Proof of theorem \ref{continuumsimple}]
To prove the theorem, we simply replace $H$ by $H_{\alpha}$ in the construction defined in the previous sections (with suitable modifications for the definitions of $\lambda,\pi,\kappa_n, \mathcal{Z}$ etc.).
For the proof of simplicity, note that $H<H_{\alpha}<\textup{PL}^+([0,1])$ and the only additional ingredient needed in the proofs is the statement of Lemma \ref{perfect}.
Therefore, one can simply replace $H$ by $H_{\alpha}$ in the proof of simplicity.
Recall that a countable group contains countably many finitely generated subgroups.
Since $\mathcal{N}$ consists of continuum many isomorphism types of finitely generated groups (from Proposition \ref{PLLemma} in the preliminaries), and $\Gamma_{\alpha}<G_{\rho,\alpha}$,
we conclude the statement of the theorem.
\end{proof}

\end{document}